\documentclass[11pt]{article}

\usepackage[parfill, skip=.25\baselineskip plus 1pt, indent=17pt]{parskip}

\usepackage[a4paper, margin=1in]{geometry}

\usepackage{amsmath,array}
\usepackage{amssymb}
\usepackage{amsthm}
\usepackage[inline,shortlabels]{enumitem}
\usepackage{graphicx}
\usepackage{hyperref}
\usepackage{times}
\usepackage[cal=boondoxo]{mathalfa}
\usepackage{upgreek}

\allowdisplaybreaks

\setlist[enumerate,1]{label=\roman*., leftmargin=8mm}
\setlist[itemize,1]{label=\textbullet, leftmargin=6mm}

\newtheorem{theorem}{Theorem}[section]
\newtheorem{corollary}{Corollary}[theorem]
\newtheorem{remark}[corollary]{Remark}
\newtheorem{proposition}[theorem]{Proposition}
\newtheorem{lemma}[theorem]{Lemma}
\newtheorem{definition}[theorem]{Definition}

\numberwithin{equation}{section}

\newcommand{\<}{\langle}
\renewcommand{\>}{\rangle}

\DeclareMathOperator{\dist}{dist}
\DeclareMathOperator{\Span}{span}

\newcommand{\norm}[1]{\left\lVert #1\right\rVert}

\newcommand{\brac}[1]{\left( #1\right)}
\newcommand{\sbrac}[1]{\left[ #1\right]}
\newcommand{\inner}[1]{\left\< #1\right\>}

\newcommand{\bs}[1]{\boldsymbol{\mathbf{#1}}}

\newcommand{\eva}[1]{\left.#1\right|}

\newcommand{\ph}{\,\cdot\,}

\newcommand{\N} {\mathbb{N}}

\newcommand{\C} {\mathbb{C}}
\newcommand{\R} {\mathbb{R}}

\newcommand{\lpc}{\Delta}
\newcommand{\grad}{\nabla}

\newcommand{\mb}{\mathbf}

\renewcommand{\d}{\textup{d}}

\title{Nonradial linear stability of liquid Lane--Emden stars}

\author{King Ming Lam\thanks{Delft Institute of Applied Mathematics, Delft University of Technology, 2628 CD Delft, Netherlands. Email: \href{mailto:K.M.Lam@tudelft.nl}{K.M.Lam@tudelft.nl}.}}

\date{}

\begin{document}

\maketitle

\begin{abstract}
The classical model of a star is the Lane--Emden star with dynamics governed by the Euler--Poisson equations. We consider the case of a liquid star with a ``stiffened gas'' equation of state $p=\rho^\gamma-1$. We derive the full 3D linearised Euler--Poisson system around liquid Lane--Emden stars with no symmetry assumptions on the perturbations and show that the associated linear operator $\mb L$ is non-negative whenever the radial mode is non-negative. We show that $\mb L$ has an infinite-dimensional kernel each element of which corresponds to a linearly growing solution to the linearised system. When restricted to irrotational perturbations and modding out the three kernel elements corresponding to momentum conservation, however, we prove that $\mb L$ is strictly positive with coercivity bound $\<\mb L\bs\theta,\bs\theta\>_{\bar\rho}\gtrsim\|\bs\theta\|_{L^2(B_R)}^2$. Hence we demonstrate that the liquid Lane--Emden stars are stable against non-radial irrotational perturbations whenever the purely radial mode is stable, improving upon previous results that dealt only with purely radial perturbations. However, the stability might not be as strong as one might hope, as we prove that $\|\grad\bs\theta\|_{L^2(B_R)}^2$ cannot be controlled even in this case.
\end{abstract}

\section{Introduction}

\subsection{The Euler--Poisson system}

We consider a fundamental model of a self-gravitating compressible fluid, given by the Euler--Poisson system.
The unknowns are the fluid density $\rho\ge0$,  the velocity vector $\mb u$, the fluid pressure $p\ge0$, and the gravitational potential $\phi$.
They solve the system
\begin{alignat}{3}
\partial_t\rho+\grad\cdot(\rho\mb u) &=0
\qquad &&\text{ in } \ \Omega(t), \label{E:CONT}\\
\rho(\partial_t + \mb u\cdot\nabla)\mb u + \grad p + \rho\grad\phi &=\mb 0 
\qquad &&\text{ in } \ \Omega(t), \label{E:MOM}\\
\lpc\phi &=4\pi\rho 
\qquad &&\text{ in } \ \mathbb R^3.\label{E:GRAV}
\end{alignat}
Here the pressure $p$ satisfies the (liquid) polytropic equation of state
\begin{equation}\label{E:EOS}
  p=\rho^\gamma-1\qquad\text{where}\qquad \gamma\in[1,2],
\end{equation}
and the star is isolated, which translates into the asymptotic boundary condition for the gravitational potential:
\begin{align}\label{E:AF}
\lim_{|\mb x|\to\infty}\phi(t,\mb x) &=0.
\end{align}
We refer to the system~\eqref{E:CONT}--\eqref{E:AF} as the liquid (EP)$_{\gamma}$-system.
Moreover $\Omega(t): = \{\mb x:\rho(t,\mb x)>0\}$ is the interior of the support of star density. Note that in~\eqref{E:GRAV} the density is trivially extended by zero to the complement of $\Omega(t)$.

Since we allow the boundary to move, we must complement equations~\eqref{E:CONT}--\eqref{E:AF} with suitable boundary conditions at the vacuum free boundary $\partial\Omega(t)$. We have the following boundary conditions:
\begin{enumerate}[a)]
\item the pressure on the boundary matches that of the vacuum outside, i.e. $p=0$ on $\partial\Omega$;
\item the normal velocity at which the boundary changes is equal to $\mb u\cdot\mb n$ at any point on the boundary, where $\mb n$ denotes the outward unit normal vector to $\partial\Omega$.
\end{enumerate}

\newcommand{\Apar}{A\!\partial}
\newcommand{\cApar}{\A\!\partial}

We shall mostly work in Lagrangian coordinates in this article, as they are particularly well suited to the analysis of fluids featuring a vacuum boundary. Let $\bs\eta(t,\mb x)$ be the fluid flow map, defined through
\[
\partial_t\bs\eta=\mb u\circ\bs\eta\qquad\text{with}\qquad\bs\eta(0,\mb x)=\bs\eta_0(\mb x),
\]
where $\mb u\circ\bs\eta(t,\mb x)=\mb u(t,\bs\eta(t,\mb x))$. The spatial domain is then fixed for all time as $\Omega_0:=\bs\eta_0^{-1}(\Omega(0))$. To reformulate the (EP)$_{\gamma}$-system in the new variables, we introduce
\begin{align*}
\mb v &=\mb u\circ\bs\eta
\hfill\tag{Lagrangian velocity}\\
f &=\rho\circ\bs\eta
\hfill\tag{Lagrangian density}\\
\psi &=\phi\circ\bs\eta
\hfill\tag{Lagrangian potential}\\
A &=(\grad\bs\eta)^{-1}
\hfill\tag{inverse of the deformation tensor}\\
J &=\det(\grad\bs\eta)
\hfill\tag{Jacobian determinant}\\
a &=JA.
\hfill\tag{cofactor matrix of the deformation tensor}
\end{align*}
Under this change of coordinates, the continuity equation becomes $fJ=f_0J_0$ and the momentum equation \eqref{E:MOM} in the domain $\Omega_0$ reads
\begin{equation}\label{E:MOMLAGR}
  \partial_t\mb v + \frac1{f_0J_0}\partial_k(A^k(f_0J_0)^{\gamma} J^{1-\gamma}) + A\grad\psi = \mb 0,
\end{equation}
where Einstein summation convention is used (see Definition \ref{Standard notations}). Moreover, $\psi$ solves the Poisson equation
\begin{equation}
(A\grad)\cdot(A\grad)\psi = 4\pi f_0J_0 J^{-1}.
\end{equation}

\newcommand{\K}{\mathcal{K}}

To better describe the gravitational potential $\psi$, we introduce the following convolutional operators
\begin{subequations}\label{E:KDEF}
\begin{align}
\K &:= 4\pi\lpc^{-1}\mb 1_{B_R}\\
\K_{\partial} &:= 4\pi\lpc^{-1}\delta_{\partial B_R}
\end{align}
\end{subequations}
where $\delta_{\partial B_R}$ is the Dirac measure on $\partial B_R$, i.e.
\begin{subequations}
\begin{align*}
\K f (\mb x) &= - \int_{B_R} \frac{f(\mb y)}{|\mb x-\mb y|}\d \mb y\\
\K_{\partial} f (\mb x) &= - \int_{\partial B_R} \frac{f(\mb y)}{|\mb x-\mb y|}\d S(\mb y).
\end{align*}
\end{subequations}
Note that $\psi$ can be written as
\begin{align*}
\psi(\mb x)
&=(\K\rho)(\bs\eta(\mb x))=-\int{\rho(\mb y)\over|\bs\eta(\mb x)-\mb y|}\d\mb y
=-\int{f(\mb z)J(\mb z)\over|\bs\eta(\mb x)-\bs\eta(\mb z)|}\d\mb z
=-\int{f_0(\mb z)J_0(\mb z)\over|\bs\eta(\mb x)-\bs\eta(\mb z)|}\d\mb z.
\end{align*}

For details of the Lagrangian description of the Euler--Poisson system, we refer to~\cite{Hadzic_Jang_2019}.

\subsection{Lane--Emden stars}\label{Lane--Emden stars def}

We look for time independent spherically symmetric solutions to the Euler--Poisson system of the form $\bs\eta(t,\mb x)=\mb x$. Under this ansatz,
\begin{align*}
\psi(\mb x)
&=-\int{f_0(\mb z)\over|\mb x-\mb z|}\d\mb z
=\K f_0(\mb x)
\end{align*}
and the momentum equation~\eqref{E:MOMLAGR} reduces to
\begin{align}\label{LE ODE pre}
{1\over f_0}\grad(f_0^{\gamma}) + \grad\mathcal Kf_0 = \mb 0,
\end{align}
or equivalently
\begin{align}\label{LE ODE}
0 = 
\begin{cases}
{\gamma\over\gamma-1}\lpc f_0^{\gamma-1} + 4\pi f_0 &\quad\text{ when }\quad\gamma>1\\
\lpc\ln f_0 + 4\pi f_0 &\quad\text{ when }\quad\gamma=1
\end{cases}
\end{align}
plus the condition that $\grad f_0(\mb 0)=\mb 0$ (this is the ``integration constant'' which can be deduced from \eqref{LE ODE pre}). In spherical symmetry we have $\lpc = r^{-2}\partial_r(r^{2}\partial_r)$, this gives the ODE which defines the Lane--Emden stars. One can show that given $f_0(0)$, there exists a unique decreasing solution to this ODE that goes to zero at some finite or infinite $r$. Let $R$ be the point at which $f_0(R)=1$. We write $\bar\rho$ to be the density profile of such liquid Lane--Emden (LE) stars:
\begin{subequations}
\begin{align}
\bar\rho(r) &:= f_0(r)\mb 1_{[0,R]}\\
\bar\rho(\mb x) &:= f_0(|\mb x|)\mb 1_{\bar B_R}.
\end{align}
\end{subequations}

Below is a lemma giving bounds for the derivative of the Lane--Emden density profile which we will use in the main text.

\begin{lemma}\label{Lane--Emden profile derivative prop}
Let $\bar\rho$ be the liquid Lane--Emden density profile with adiabatic index $\gamma$. Then $-(\bar\rho^\gamma)'/r\sim 1$, or more precisely,
\begin{align*}
{4\pi\over 3}<-{(\bar\rho^\gamma)'(r)\over r}<{4\pi\over 3}\bar\rho(0)^2.
\end{align*}
\end{lemma}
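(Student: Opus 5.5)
The plan is to integrate the radial form of the Lane--Emden equation \eqref{LE ODE pre} once in $r$ and then bound the enclosed mass by the extreme values of the density. In spherical symmetry, \eqref{LE ODE pre} reads $(\bar\rho^\gamma)'/\bar\rho = -\partial_r \K\bar\rho$ on $(0,R)$. Newton's shell theorem gives the Newtonian potential's radial derivative:
\[
\partial_r\K\bar\rho(r)=\frac{m(r)}{r^2},\qquad m(r):=4\pi\int_0^r \bar\rho(s)s^2\,\d s .
\]
This follows equally from integrating $r^{-2}(r^2\partial_r\K\bar\rho)'=4\pi\bar\rho$ against the regularity condition $\grad f_0(\mb 0)=\mb 0$. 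Hence
\[
-\frac{(\bar\rho^\gamma)'(r)}{r}=\frac{\bar\rho(r)\,m(r)}{r^3}
=\bar\rho(r)\cdot\frac{4\pi}{r^3}\int_0^r\bar\rho(s)s^2\,\d s .
\]
The $\gamma=1$ case fits the same form, since $(\ln\bar\rho)'\bar\rho=\bar\rho'=(\bar\rho^\gamma)'$.

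Next I use that $\bar\rho$ is strictly decreasing on $[0,R]$, with $\bar\rho(R)=1$ and $\bar\rho(0)$ its maximum; the paper stated this when constructing the profile. Strict monotonicity follows because $\bar\rho' = -\bar\rho^{2-\gamma}m(r)/(\gamma r^2)<0$ for $r>0$, as $m(r)>0$. Then for $0<s<r\le R$ we have $1\le\bar\rho(r)<\bar\rho(s)<\bar\rho(0)$. This gives
\[
\frac{4\pi}{3}\bar\rho(r) < \frac{m(r)}{r^3} < \frac{4\pi}{3}\bar\rho(0),
\]
with strict inequalities for $r>0$ because the integrand is strictly bounded on a set of positive measure.

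Multiplying by $\bar\rho(r)\in[1,\bar\rho(0))$ gives, for $r\in(0,R]$, the lower bound $-(\bar\rho^\gamma)'/r>\tfrac{4\pi}{3}\bar\rho(r)^2\ge\tfrac{4\pi}{3}$ and the upper bound $-(\bar\rho^\gamma)'/r<\tfrac{4\pi}{3}\bar\rho(0)^2$. At $r\to0$ the quotient tends to $\tfrac{4\pi}{3}\bar\rho(0)^2$ by L'Hôpital, which sits on the upper endpoint. So the strict upper inequality should be read for $r\in(0,R]$, or as a non-strict bound including the limit at the origin; I will state it for $r\in(0,R]$.

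The only potentially delicate step is justifying the formula for $\partial_r\K\bar\rho$, but this is classical. Otherwise the argument is a direct estimate.
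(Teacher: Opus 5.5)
Your proposal is correct and follows essentially the same route as the paper. The paper integrates $\gamma(r^2\bar\rho^{\gamma-2}\bar\rho')'=-4\pi r^2\bar\rho$ using $\bar\rho'(0)=0$, and you use the enclosed-mass formula for $\partial_r\K\bar\rho$; both give $-(\bar\rho^\gamma)'/r=\frac{4\pi\bar\rho(r)}{r^3}\int_0^r y^2\bar\rho(y)\,\d y$, which is then bounded using $1\le\bar\rho\le\bar\rho(0)$. Your extra remarks, that strictness needs strict monotonicity and that the strict upper bound holds only for $r\in(0,R]$ since the quotient tends to $\frac{4\pi}{3}\bar\rho(0)^2$ as $r\to0$, are correct refinements the paper leaves implicit.
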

\begin{proof}
By \eqref{LE ODE} we have $0=\gamma(r^2\bar\rho^{\gamma-2}\bar\rho')'+4\pi r^2\bar\rho$ and $\bar\rho'(0)=0$, or equivalently
\begin{align*}
-{(\bar\rho^\gamma)'(r)\over r} &={4\pi\bar\rho(r)\over r^3}\int_0^r y^2\bar\rho(y)\d y.
\end{align*}
Using that $1\leq\bar\rho\leq\bar\rho(0)$ we have
\begin{align*}
-{(\bar\rho^\gamma)'(r)\over r} &<{4\pi\bar\rho(0)\over r^3}\int_0^r y^2\bar\rho(0)\d y
={4\pi\over 3}\bar\rho(0)^2\\
-{(\bar\rho^\gamma)'(r)\over r} &>{4\pi\over r^3}\int_0^r y^2\d y
={4\pi\over 3}.\qedhere
\end{align*}
\end{proof}

For details of liquid Lane--Emden stars, we refer to~\cite{Lam_2024}. In this paper we will be studying the stability of such liquid Lane--Emden stars.

\subsection{History and Background}

At the most basic level, a star is a lump of fluid (usually hot gas) in a vacuum acted on by two competing self-generated forces --- its own pressure which wants to expand the gas into the vacuum, and its own gravity which wants to concentrate the gas. Hence the study of stellar dynamics begins with the study of fluid equations.

The Euler equations describing the motion of adiabatic inviscid fluids were first described by Euler in 1757 \cite{Euler_1757}, but even today remain an active area of research. Even in the absence of a vacuum, a free boundary, and self-gravity, it is a highly non-trivial problem. In particular, the Euler equations are prone to singularity/shock formation even from quite regular data, see for example \cite{Christodoulou_2007} by Christodoulou and \cite{Sideris_1985} by Sideris. This makes the study of existence and behaviour and global solutions non-trivial. In the presence of a vacuum and a free boundary, the problem is even trickier due to the boundary movement and the degeneracy or discontinuity across the boundary, which means the classical theory for local existence of hyperbolic systems \cite{Majda_1984} does not apply. Local existence theory for the Euler equations in this setting has only been established recently: for the gaseous case by Coutand and Shkoller \cite{Coutand_Shkoller_2012}, Jang and Masmoudi \cite{Jang_Masmoudi_2015} and Ifrim and Tataru \cite{Ifrim_Tataru_2024}; for the liquid case by Lindblad \cite{Lindblad_2005}, Trakhinin \cite{Trakhinin_2009} and Coutand, Hole and Shkoller \cite{Coutand_Hole_Shkoller_2013}. To make a model for stars, however, one has to couple the Euler equations with self-gravity given by the Poisson equation, the result of which is an additional concentrating dynamics that could potentially result in a blow up. Indeed, it has been shown that solutions could blow up in a finite time \cite{Deng_Xiang_Yang_2003, Makino_1992}. And indeed in our universe, we observe both stable stars like our Sun and stars collapsing into black holes. The study of such rich dynamics has been the occupation of generations of scholars.

Beginning with the foundational works of J. Homer Lane \cite{Lane_1870}, August Ritter, and Robert Emden \cite{Emden_1907} in the late 19th and early 20th centuries, and later expanded by astrophysicists such as Chandrasekhar \cite{Chandrasekhar_1939}, Shapiro and Teukolsky \cite{Shapiro_Teukolsky_1983}, the self-gravitating sphere of fluid has served as the bedrock of astrophysical modeling. In this classical framework, the star is treated as a compressible gas governed by a polytropic equation of state $p = \rho^\gamma$ with motion governed by the Euler--Poisson equations. The \emph{Lane--Emden stars} are spherically symmetric time-independent solutions to the system, representing stars in hydrostatic equilibrium when pressure and gravity balance exactly. Despite and because of its simplicity, it is a good approximation for certain regions of various types of stars, and so it serves as the classical model of stars. At the mass critical index $\gamma=4/3$, there exists another class of spherically symmetric but expanding and collapsing solutions known as the Goldreich--Weber stars that were first described by astrophysicists Goldreich and Weber \cite{Goldreich_Weber_1980} which serve as models for stellar collapse and expansion such as supernova expansion.

Given their physical relevance, it is therefore important to have a local well-posedness theory for the Euler--Poisson system in this free boundary context to ensure the model is meaningful; and secondly, to establish stability of the Lane--Emden stars and Goldreich--Weber stars under perturbation to ensure they are generic objects that can exist in our universe --- a universe with many things that can disturb each other. On the question of local existence, this was done by Gu and Lei \cite{Gu_Lei_2016} and by Had\v{z}i\'c and Jang \cite{Hadzic_Jang_2019}. On the question of stability it was shown \cite{Lin_1997, Jang_Makino_2020} that the gaseous Lane--Emden stars are linearly stable when $4/3\leq\gamma<2$ but unstable when $1<\gamma<4/3$. In the critical case $\gamma=4/3$ the Lane--Emden stars are in fact nonlinearly unstable as the Goldreich--Weber stars exist on the same parameter family as them and hence can be reached by an arbitrarily small perturbation. Recently, nonlinear instability in the range $6/5\leq\gamma<4/3$ has been proven in \cite{Jang_2008, Jang_2014} by Jang, and the dynamics of expanding solutions near these unstable stars have now been detailed by Cheng, Cheng, and Lin \cite{Cheng_Cheng_Lin_2025}. Nonlinear stability in the range $4/3<\gamma<2$ has not been fully proven, but conditioned upon the assumption that a global-in-time solution exists it has been shown by Rein \cite{Rein_2003} and Luo and Smoller \cite{Luo_Smoller_2009} using variational arguments; and more recently this was improved by Lin, Wang, and Zhu \cite{Lin_Wang_Zhu_2024} which in particular proved the nonlinear stability unconditionally in the case of spherical symmetric perturbations. Nonlinear stability of the expanding Goldreich--Weber stars against radially symmetric perturbations was proven by Had\v{z}i\'c and Jang \cite{Hadzic_Jang_2018}, and recently generalised to non-radial perturbations by the author together with Had\v{z}i\'c and Jang in \cite{Hadzic_Jang_Lam}.

However, there are cases where this classical model fails to be adequate. The stability of the classical gaseous Lane--Emden stars depends on (with some caveat as explained later) the type of gas we have (the adiabatic index $\gamma$ represents the ``stiffness'' of the gas, e.g. for helium $\gamma=5/3$) but not the overall mass of the star. Indeed, the natural scaling of the Euler--Poisson equations means that solutions of one mass can be scaled to a solution of a different mass, hence classical Lane--Emden stars of different mass are effectively the same. But in astrophysical reality, due to relativistic effects, there exists a maximum mass for stars beyond which stars would explode or collapse. In particular, a main-sequence star would become unstable and blow mass away upon reaching the Eddington limit \cite{Eddington_1916}; a white dwarf beyond the Chandrasekhar limit \cite{Chandrasekhar_1931} would either collapse into a neutron star or explode as a supernova; a neutron star beyond the Tolman--Oppenheimer--Volkoff (TOV) limit \cite{Tolman_1939, Oppenheimer_Volkoff_1939} would collapse to form a black hole. In practice, main-sequence stars are often still modelled by the classical Lane--Emden stars, but shifting the adiabatic index $\gamma$ to also take into account radiation pressure and not just gas pressure. As mass and hence radiation pressure increase, $\gamma\to 4/3$ and the star becomes unstable. In this way by shifting $\gamma$ to take into account other effects, the classical Lane--Emden stars have also been used to model white dwarfs and neutron stars. However, to properly describe the mechanism of the Chandrasekhar limit and Tolman--Oppenheimer--Volkoff (TOV) limit, a relativistic model is better suited. The Chandrasekhar limit results from the fact that electrons cannot move faster than the speed of light, thus the pressure they create is capped and cannot be infinitely scaled up; the Tolman--Oppenheimer--Volkoff (TOV) limit results from the fact that energy and pressure themselves, and not just mass, also create gravity in general relativity.

A relativistic star model for this is given by the Einstein--Euler system. Had\v{z}i\'c, Lin and Rein \cite{Hadzic_Lin_Rein_2021} have proven the radial linear stability of the relativistic star with small mass and the instability of that with large mass; and Had\v{z}i\'c and Lin have further characterised the transition point \cite{Hadzic_Lin_2021}. In a previous work \cite{Lam_2024}, the author has proven that within the framework of the classical Lane--Emden Euler--Poisson model, if one replaces the equation of state with a ``stiffened gas'' equation of state (effectively adding a constant background pressure, often used to model liquids) $p=\rho^\gamma-1$, the resultant system behaves in a very similar way to the relativistic model --- in the mass supercritical case $1<\gamma<4/3$, the resultant \emph{liquid} Lane--Emden stars are radially linearly stable when it has small mass while unstable when it has large mass (very recently Hao and Miao \cite{Hao_Miao_2024} have proven the instability nonlinearly as well based on the linear theory in \cite{Lam_2024}), in contrast to the classical gaseous Lane--Emden stars which are unstable irrespective of mass. In both the relativistic and liquid case, the effect comes from scale breaking in the pressure -- in the relativistic case, the speed of light as the upper limit of speed imposes a fixed pressure scale; and in the liquid case, the addition of the constant created the same. 

In effect, the liquid equation of state gives an additional stabilising effect that makes small-mass liquid stars stable even in the supercritical range. In a famous scientific debate with Arthur Eddington in the 1920s, Sir James Jeans suggested that liquid stars are more stable than gaseous stars \cite{Jeans_1927, Jeans_1928, Eddington_1928}. Even though his specific theory of main-sequence stars being liquid undergoing fission was shown to be wrong, \cite{Lam_2024} and our work here have shown that there is some truth in his suggestion that liquid stars are more stable than gaseous stars and that indeed in some ways real relativistic stars resemble liquid stars. Moreover, since the core of neutron stars behave as a superfluid, the liquid model we consider could serve as a useful model and bridge towards the study of relativistic stars. In particular, our result here of non-radial linear stability of liquid Lane--Emden stars could serve as a starting point for the investigation of non-radial stability of relativistic stars. Solid foundations have already been laid --- local well-posedness for the liquid Euler--Poisson system as considered in this paper has been proven by Ginsberg, Lindblad and Luo \cite{Ginsberg_Lindblad_Luo_2020}. For the relativistic Euler equations (no self-gravity), it has been done by Oliynyk \cite{Oliynyk} and Miao, Shahshahani and Wu \cite{Miao_Shahshahani_Wu_2021} in the liquid case and Disconzi, Ifrim and Tataru \cite{Disconzi_Ifrim_Tataru_2022} in the gaseous case, but that for the Einstein--Euler system remains an open problem.

\subsection{Notations}

We collect here some notation we will use for the reader's convenience.

\begin{definition}[Notations]\label{Standard notations}
\begin{enumerate}[before=\leavevmode]
\item $\mb e_i$ ($i=1,2,3$) denotes the standard basis of $\R^3$, while $\mb e_r$ denotes the radial unit vector $\mb x/|\mb x|$.
\item We denote $\<\bs\theta_1,\bs\theta_2\>_{\bar\rho} =\<\bs\theta_1,\bar\rho\bs\theta_2\>_{L^2(B_R)}$ and similarly for the norm $\|\ph\|_{\bar\rho}$, i.e. the $L^2$ inner product and norm weighted by $\bar\rho$.
\item We denote by $Y_{lm}$ the real spherical harmonics, see Appendix \ref{A:SPHERICALHARMONICS}.
\end{enumerate}
\end{definition}

\section{Formulation and results}

\subsection{Perturbation of liquid Lane--Emden stars}

Before formulating the stability problem, we must first make use of the labelling gauge freedom and fix the choice of $f_0J_0$ for the general perturbation to be exactly identical to the LE profile, i.e. we set
\begin{align}\label{E:GAUGECHOICE}
f_0J_0 = \bar\rho \ \qquad \text{ on }\ B_R(\mb 0).
\end{align}
This equation can be re-written in the form $\rho_0\circ\bs\eta_0 \det[\grad\bs\eta_0] = \bar\rho$ on the initial domain $B_R(\mb 0)$. By a result of Dacorogna--Moser~\cite{Dacorogna_Moser_1990} and similarly to~\cite{Hadzic_Jang_2018, Hadzic_Jang_2018-2} there exists a choice of an initial bijective map $\bs\eta_0:B_R(\mb 0)\to\Omega(0)$ so that~\eqref{E:GAUGECHOICE} holds true. The gauge fixing condition~\eqref{E:GAUGECHOICE} is necessary as it constrains the freedom to arbitrarily relabel the particles at the initial time.

\begin{lemma}[Euler--Poisson around liquid LE profile]
With respect to the liquid LE profile $\bar\rho$ from section \ref{Lane--Emden stars def}, the perturbation variable
\begin{equation}\label{E:THETADEF}
  \bs\theta(\mb x,t) := \bs\eta(\mb x,t) - \mb x,
\end{equation}
which measures the deviation of the nonlinear flow to the background LE profile, formally solves
\begin{subequations}\label{E:EP around LE}
\begin{align}
\partial_t^2\bs\theta + \bar\rho^{-1}\partial_k(\bar \rho^\gamma(A^kJ^{1-\gamma}-I^k)) + A\grad\psi - \grad\mathcal{K}\bar\rho &= \mb 0\qquad\text{ on }\  B_R\\
J &= 1\qquad\text{ on }\ \partial B_R.
\end{align}
\end{subequations}
\end{lemma}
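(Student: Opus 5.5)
The derivation starts from the Lagrangian momentum equation \eqref{E:MOMLAGR} and uses the gauge choice \eqref{E:GAUGECHOICE} to replace $f_0J_0$ by $\bar\rho$ throughout. Since $\bs\eta=\mb x+\bs\theta$, we have $\partial_t\mb v=\partial_t^2\bs\eta=\partial_t^2\bs\theta$. Substituting $f_0J_0=\bar\rho$ into \eqref{E:MOMLAGR} then gives
\begin{align*}
\partial_t^2\bs\theta+\bar\rho^{-1}\partial_k\brac{A^k\bar\rho^\gamma J^{1-\gamma}}+A\grad\psi=\mb 0\qquad\text{on } B_R.
\end{align*}
Recall also that
\begin{align*}
\psi=-\int\frac{\bar\rho(\mb z)}{|\bs\eta(\mb x)-\bs\eta(\mb z)|}\d\mb z,
\end{align*}
which is the Lagrangian potential. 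Nothing further needs to be done to it.

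The next step is to subtract the steady equation. The LE profile satisfies \eqref{LE ODE pre} with $f_0=\bar\rho$, namely $\bar\rho^{-1}\grad(\bar\rho^\gamma)+\grad\K\bar\rho=\mb 0$ on $B_R$. Written in components, $\bar\rho^{-1}\partial_k(\bar\rho^\gamma I^k)+\grad\K\bar\rho=\mb 0$, where $I^k$ is the $k$-th column of the identity, so that $\partial_k(\bar\rho^\gamma I^k)=\grad\bar\rho^\gamma$. Subtracting this zero quantity from the equation above yields exactly the bulk equation in \eqref{E:EP around LE}, with the combination $A^kJ^{1-\gamma}-I^k$ inside the divergence and the term $-\grad\K\bar\rho$ appearing.

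The boundary condition comes from the vacuum condition a). Since $p=0$ on $\partial\Omega(t)$, the equation of state \eqref{E:EOS} gives $\rho=1$ on the moving boundary, and hence $f=\rho\circ\bs\eta=1$ on $\partial B_R$. The continuity equation in Lagrangian form is $fJ=f_0J_0=\bar\rho$. By the definition of $R$ we have $\bar\rho=f_0(R)=1$ on $\partial B_R$. It follows that $J=\bar\rho/f=1$ there.

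There is no genuine obstacle, since the statement is only formal. The one step needing care is the bookkeeping of the pressure term in \eqref{E:MOMLAGR}. That term should be recognised as $A^k\partial_k p$ written in divergence form. This uses the Piola identity $\partial_k(JA^k_i)=0$, together with $p\circ\bs\eta+1=(\bar\rho/J)^\gamma$, which gives $f^\gamma=\bar\rho^\gamma J^{-\gamma}$. One should also check that the constant $-1$ in the equation of state drops out under differentiation, so that it affects only the boundary condition and not the bulk equation.
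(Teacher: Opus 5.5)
Your proposal is correct and follows the paper's proof. You substitute the gauge $f_0J_0=\bar\rho$ into \eqref{E:MOMLAGR}, subtract the steady Lane--Emden equation, and read off $J=1$ on $\partial B_R$ from $fJ=\bar\rho$ with $f=\bar\rho=1$ there. Your form of the steady equation, $\bar\rho^{-1}\grad\bar\rho^\gamma+\grad\K\bar\rho=\mb 0$, is the same identity the paper uses (and also covers $\gamma=1$); your Piola-identity check and your derivation of $f=1$ from $p=0$ only make explicit steps the paper leaves implicit.
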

\begin{proof}
Recall from \eqref{LE ODE} that the LE profile satisfies
\begin{align}
\mb 0={\gamma\over\gamma-1}\grad\bar\rho^{\gamma-1}+\grad\K\bar\rho.
\label{E:equation for bar-w}
\end{align}
Using the gauge condition~\eqref{E:GAUGECHOICE},
the momentum equation~\eqref{E:MOMLAGR} minus the above line becomes
\begin{align*}
\partial_t^2\bs\theta+\bar\rho^{-1}\partial_k(\bar \rho^\gamma(A^kJ^{1-\gamma}-I^k))+A\grad\psi-\grad\mathcal{K}\bar\rho=\mb 0.
\end{align*}
Moreover, we have $J=fJ=f_0J_0=\bar\rho=1$ on $\partial B_R$.
\end{proof}

Next we derive the linearisation of the system.

\begin{lemma}[Linearised Euler--Poisson around liquid LE profile]\label{L:LINEAREP}
The formal linearisation of \eqref{E:EP around LE} reads
\begin{subequations}\label{E:LINDYN}
\begin{align}
\partial_t^2\bs\theta+\mb L\bs\theta &= \mb 0\qquad\text{ on }\ B_R\\
\grad\cdot\bs\theta&=0\qquad\text{ on }\ \partial B_R
\end{align}
\end{subequations}
where
\begin{align}\label{E:LINDEF}
\mb L\bs\theta&:=-\grad\brac{(\gamma\bar\rho^{\gamma-2}+\K)\grad\cdot(\bar\rho\bs\theta)+\int_{\partial{B_R}}{\bs\theta(\mb z)\over|\mb x-\mb z|}\cdot\d\mb S(\mb z)}\nonumber\\
&\ =-\grad\brac{(\gamma\bar\rho^{\gamma-2}+\K-\bar\rho'(R)^{-1}\K_{\partial})\grad\cdot(\bar\rho\bs\theta)}.
\end{align}
\end{lemma}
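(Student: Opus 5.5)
The plan is to expand every nonlinear quantity in \eqref{E:EP around LE} to first order in $\bs\theta$ and $\grad\bs\theta$, and then reorganise the result as a pure gradient using the Lane--Emden relation \eqref{E:equation for bar-w}. Since $\grad\bs\eta=I+\grad\bs\theta$, we have
\[
A\approx I-\grad\bs\theta,\qquad J\approx 1+\grad\cdot\bs\theta,\qquad J^{1-\gamma}\approx 1+(1-\gamma)\grad\cdot\bs\theta .
\]
The boundary condition $J=1$ on $\partial B_R$ therefore linearises at once to $\grad\cdot\bs\theta=0$ on $\partial B_R$.

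\textbf{Pressure term.} The $i$-th component of $A^kJ^{1-\gamma}-I^k$ linearises to $-\partial_i\theta^k+(1-\gamma)\delta_{ik}\grad\cdot\bs\theta$. So the pressure term becomes
\[
\bar\rho^{-1}\partial_k\big(\bar\rho^\gamma(-\partial_i\theta^k+(1-\gamma)\delta_{ik}\grad\cdot\bs\theta)\big).
\]
I will rewrite $\partial_k(\bar\rho^\gamma\partial_i\theta^k)=\partial_i(\partial_k(\bar\rho^\gamma\theta^k))-\partial_k(\partial_i\bar\rho^\gamma\,\theta^k)$ and repeatedly use $\grad\bar\rho^\gamma=\gamma\bar\rho^{\gamma-1}\grad\bar\rho$. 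The aim is to bring this to the form
\[
-\grad\big(\gamma\bar\rho^{\gamma-2}\grad\cdot(\bar\rho\bs\theta)\big)+\text{(terms linear in $\bs\theta$ involving $\grad^2\bar\rho^{\gamma-1}$)}.
\]
Then \eqref{E:equation for bar-w}, in the form $\tfrac{\gamma}{\gamma-1}\grad^2\bar\rho^{\gamma-1}=-\grad^2\K\bar\rho$, should identify the leftover as $-(\grad^2\K\bar\rho)\,\bs\theta$. The $\gamma=1$ case works the same way with $\ln\bar\rho$ in place of $\bar\rho^{\gamma-1}/(\gamma-1)$.

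\textbf{Gravity term.} I expect this to be the main obstacle. Write
\[
\psi(\mb x)=-\int_{B_R}\frac{\bar\rho(\mb z)}{|\mb x+\bs\theta(\mb x)-\mb z-\bs\theta(\mb z)|}\,\d\mb z .
\]
Its first variation is
\[
\delta\psi=\bs\theta(\mb x)\cdot\grad\K\bar\rho(\mb x)-\int_{B_R}\bar\rho(\mb z)\,\bs\theta(\mb z)\cdot\grad_{\mb z}\frac{1}{|\mb x-\mb z|}\,\d\mb z .
\]
Integrating by parts in $\mb z$ and using $\bar\rho=1$ on $\partial B_R$ turns the integral into $-\K(\grad\cdot(\bar\rho\bs\theta))$ plus the boundary term $\int_{\partial B_R}\bs\theta\cdot\d\mb S/|\mb x-\mb z|$, up to signs that must be tracked carefully.

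Next, $A\grad\psi-\grad\K\bar\rho$ linearises to $\grad\delta\psi-(\grad\bs\theta)^T\grad\K\bar\rho$. The identity
\[
\grad(\bs\theta\cdot\grad\K\bar\rho)-(\grad\bs\theta)^T\grad\K\bar\rho=(\grad^2\K\bar\rho)\,\bs\theta
\]
leaves the Hessian term $(\grad^2\K\bar\rho)\,\bs\theta$, which cancels exactly against the leftover $-(\grad^2\K\bar\rho)\,\bs\theta$ from the pressure term. Summing the two contributions then gives the first line of \eqref{E:LINDEF}. This sign bookkeeping is where I expect errors; I would check it against the radial case from \cite{Lam_2024}.

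\textbf{Second form of $\mb L$.} On $\partial B_R$ the boundary condition $\grad\cdot\bs\theta=0$ gives $\grad\cdot(\bar\rho\bs\theta)=\bar\rho'(R)\,\bs\theta\cdot\mb e_r$. Hence $\bs\theta\cdot\d\mb S=\bar\rho'(R)^{-1}\grad\cdot(\bar\rho\bs\theta)\,\d S$ there. With the definition of $\K_\partial$, the boundary integral equals $-\bar\rho'(R)^{-1}\K_\partial\grad\cdot(\bar\rho\bs\theta)$, where $\K_\partial$ acts on the trace. This yields the second line of \eqref{E:LINDEF}.
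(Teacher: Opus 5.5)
Your proposal is correct and follows the paper's proof. You linearise $A$ and $J$, use the Lane--Emden relation to write the pressure term as $-\grad(\gamma\bar\rho^{\gamma-2}\grad\cdot(\bar\rho\bs\theta))-\bs\theta\cdot\grad\grad\K\bar\rho$, expand $\psi$ to first order and integrate by parts using $\bar\rho(R)=1$, so that the Hessian terms cancel; the boundary integral enters $\delta\psi$ with a minus sign, which is exactly what the first line of $\mb L$ needs. Your derivation of the second form of $\mb L$ via $\grad\cdot(\bar\rho\bs\theta)=\bar\rho'(R)\,\bs\theta\cdot\mb e_r$ on $\partial B_R$, and your remark on the case $\gamma=1$, fill in two points the paper leaves implicit.
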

\begin{proof}
Since $\grad\bs\eta=I+\grad\bs\theta$, to first order (in $\bs\theta$) we have $A=I-\grad\bs\theta$ and $J=1+\grad\cdot\bs\theta$. So to first order we have
\begin{align*}
AJ^{1-\gamma}&=(I-\grad\bs\theta)(1+\grad\cdot\bs\theta)^{1-\gamma}
=(I-\grad\bs\theta)\brac{1-(\gamma-1)\grad\cdot\bs\theta}
=\brac{1-(\gamma-1)\grad\cdot\bs\theta}I-\grad\bs\theta
\end{align*}
and
\begin{align*}
{1\over\bar\rho}\partial_k(\bar\rho^\gamma(A^kJ^{1-\gamma}-I^k))
&=-{\gamma-1\over\bar\rho}\grad(\bar\rho^\gamma\grad\cdot\bs\theta)-{1\over\bar\rho}\partial_k(\bar\rho^\gamma\grad\theta^k)\\
&=-\gamma\grad(\bar\rho^{\gamma-1}\grad\cdot\bs\theta)-{\gamma\over\gamma-1}\grad(\bs\theta\cdot\grad\bar\rho^{\gamma-1})+{\gamma\over\gamma-1}\bs\theta\cdot\grad\grad\bar\rho^{\gamma-1}\\
&=-\gamma\grad(\bar\rho^{\gamma-2}\grad\cdot(\bar\rho\bs\theta))+{\gamma\over\gamma-1}\bs\theta\cdot\grad\grad\bar\rho^{\gamma-1}\\
&=-\gamma\grad(\bar\rho^{\gamma-2}\grad\cdot(\bar\rho\bs\theta))-\bs\theta\cdot\grad\grad\K\bar\rho.
\end{align*}
Since
\begin{align*}
|\bs\eta(\mb x)-\bs\eta(\mb z)|^2 
&=|\mb x-\mb z+\bs\theta(\mb x)-\bs\theta(\mb z)|^2
=|\mb x-\mb z|^2+2(\mb x-\mb z)\cdot(\bs\theta(\mb x)-\bs\theta(\mb z))+|\bs\theta(\mb x)-\bs\theta(\mb z)|^2,
\end{align*}
to first order we have
\[
{1\over|\bs\eta(\mb x)-\bs\eta(\mb z)|}={1\over|\mb x-\mb z|}\brac{1-{(\mb x-\mb z)\cdot(\bs\theta(\mb x)-\bs\theta(\mb z))\over|\mb x-\mb z|^2}}.
\]
Consequently, to first order, we obtain
\begin{align*}
\psi(\mb x)&=-\int{\bar\rho(\mb z)\over|\bs\eta(\mb x)-\bs\eta(\mb z)|}\d\mb z
=-\int{\bar\rho(\mb z)\over|\mb x-\mb z|}\d\mb z+\int{(\mb x-\mb z)\cdot(\bs\theta(\mb x)-\bs\theta(\mb z))\over|\mb x-\mb z|^3}\bar\rho(\mb z)\d\mb z\\
&=(\K\bar\rho)(\mb x)+\int\brac{-\bs\theta(\mb x)\cdot\grad_{\mb x}{1\over|\mb x-\mb z|}-\bs\theta(\mb z)\cdot\grad_{\mb z}{1\over|\mb x-\mb z|}}\bar\rho(\mb z)\d\mb z\\
&=(\K\bar\rho)(\mb x)+\bs\theta\cdot\grad(\K\bar\rho)(\mb x)-(\K\grad\cdot(\bar\rho\bs\theta))(\mb x)-\int_{\partial{B_R}}{\bs\theta(\mb z)\over|\mb x-\mb z|}\cdot\d\mb S(\mb z)
\end{align*}
and
\begin{align*}
A\grad\psi&=(I^i-\grad\theta^i)\partial_i\brac{\K\bar\rho+\bs\theta\cdot\grad\K\bar\rho-\K\grad\cdot(\bar\rho\bs\theta)-\int_{\partial{B_R}}{\bs\theta(\mb z)\over|\mb x-\mb z|}\cdot\d\mb S(\mb z)}\\
&=\grad\K\bar\rho-(\grad\theta^i)\partial_i\K\bar\rho+\grad(\bs\theta\cdot\grad\K\bar\rho)-\grad\K\grad\cdot(\bar\rho\bs\theta)-\grad\int_{\partial{B_R}}{\bs\theta(\mb z)\over|\mb x-\mb z|}\cdot\d\mb S(\mb z)\\
&=\grad\K\bar\rho+\bs\theta\cdot\grad\grad\K\bar\rho-\grad\K\grad\cdot(\bar\rho\bs\theta)-\grad\int_{\partial{B_R}}{\bs\theta(\mb z)\over|\mb x-\mb z|}\cdot\d\mb S(\mb z).
\end{align*}
Therefore the linearisation of the momentum equation~\eqref{E:EP around LE} takes the form~\eqref{E:LINDYN}.
\end{proof}

\subsection{Properties of $\mb L$}

Studying the linear stability of the liquid LE profile entails analysing the linear operator $\mb L$. In particular we need a precise understanding of its coercivity properties. Hence the subject of this section is the properties of $\mb L$.

Before stating the main theorem, we first establish some basic properties of $\mb L$.

\begin{lemma}[Symmetry of $\mb L$]
$\mb L$ is symmetric in the space $H=\{\bs\theta\in H^2(B_R):(\grad\cdot\bs\theta)|_{\partial B_R}=0\}$ and
\begin{align*}
\<\mb L\bs\theta_1,\bs\theta_2\>_{\bar\rho}
&=\int_{B_R}\grad\cdot(\bar\rho\bs\theta_2)\brac{\gamma\bar\rho^{\gamma-2}+\K-\bar\rho'(R)^{-1}\K_{\partial}}\grad\cdot(\bar\rho\bs\theta_1)\d\mb x\\
&\quad-\int_{\partial B_R}\bar\rho'(R)^{-1}\grad\cdot(\bar\rho\bs\theta_2)\brac{\gamma+\K-\bar\rho'(R)^{-1}\K_{\partial}}\grad\cdot(\bar\rho\bs\theta_1)\d S.
\end{align*}
\end{lemma}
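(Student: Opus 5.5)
Write $\mb L\bs\theta=-\grad\Phi[\bs\theta]$ with
\[
\Phi[\bs\theta]:=\brac{\gamma\bar\rho^{\gamma-2}+\K-\bar\rho'(R)^{-1}\K_\partial}\grad\cdot(\bar\rho\bs\theta),
\]
which is the second form in \eqref{E:LINDEF}. The plan is to compute $\<\mb L\bs\theta_1,\bs\theta_2\>_{\bar\rho}=-\int_{B_R}\bar\rho\bs\theta_2\cdot\grad\Phi[\bs\theta_1]\,\d\mb x$ by a single integration by parts, and then read off symmetry from the structure of the result.

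\textbf{Integration by parts.} The divergence theorem gives
\[
-\int_{B_R}\bar\rho\bs\theta_2\cdot\grad\Phi[\bs\theta_1]
=\int_{B_R}\grad\cdot(\bar\rho\bs\theta_2)\,\Phi[\bs\theta_1]-\int_{\partial B_R}\bar\rho\,(\bs\theta_2\cdot\mb e_r)\,\Phi[\bs\theta_1]\,\d S .
\]
The volume term is exactly the first line of the claimed identity. On $\partial B_R$ we have $\bar\rho=1$ and $\bar\rho^{\gamma-2}=1$. The boundary condition $\grad\cdot\bs\theta_2=0$ on $\partial B_R$ gives
\[
\grad\cdot(\bar\rho\bs\theta_2)=\bar\rho\grad\cdot\bs\theta_2+\bar\rho'(R)\,\bs\theta_2\cdot\mb e_r=\bar\rho'(R)\,\bs\theta_2\cdot\mb e_r \quad\text{on }\partial B_R ,
\]
so $\bs\theta_2\cdot\mb e_r=\bar\rho'(R)^{-1}\grad\cdot(\bar\rho\bs\theta_2)$ there. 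This uses $\bar\rho'(R)\neq0$, which holds by Lemma~\ref{Lane--Emden profile derivative prop}. Substituting, and using that $\K$ and $\K_\partial$ are defined on all of $\R^3$ (in particular on $\partial B_R$), the boundary term becomes the second line of the claim, with $\gamma\bar\rho^{\gamma-2}$ replaced by $\gamma$.

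\textbf{Symmetry.} It remains to see that the resulting expression is symmetric in $(\bs\theta_1,\bs\theta_2)$. Set $g_i=\grad\cdot(\bar\rho\bs\theta_i)$, viewed as a function on $B_R$ together with its trace on $\partial B_R$. The local terms $\int\gamma\bar\rho^{\gamma-2}g_2g_1$ and $\int_{\partial B_R}\gamma g_2g_1$ are obviously symmetric. The nonlocal terms come from the kernel $-1/|\mb x-\mb z|$ paired against the measure $\mu_i=g_i\,\d\mb x|_{B_R}-\bar\rho'(R)^{-1}g_i\,\d S|_{\partial B_R}$. Expanding the cross terms, the bulk and boundary contributions combine into the single quadratic form
\[
-\iint\frac{\d\mu_2(\mb x)\,\d\mu_1(\mb z)}{|\mb x-\mb z|},
\]
which is symmetric because the kernel is. I must check the signs carefully: the boundary term carries a factor $-\bar\rho'(R)^{-1}$, and $\K_\partial$ carries another, so the coefficient of $\K_\partial$ in that term is $+\bar\rho'(R)^{-2}$. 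This matches $\iint\d\mu_2\,\d\mu_1$ restricted to the boundary parts, and the two mixed bulk–boundary terms pair up correctly.

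\textbf{Main obstacle.} I expect the delicate part to be the sign and coefficient bookkeeping in these mixed terms, together with regularity. Since $\bs\theta\in H^2$, $g_i\in H^1$, so the traces on $\partial B_R$ are well defined. The operators $\K$ and $\K_\partial$ map into functions regular enough (a Newtonian potential and a single-layer potential) that the integration by parts is justified.
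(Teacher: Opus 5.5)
Your proposal is correct and follows essentially the same route as the paper. You integrate by parts once, use $(\grad\cdot\bs\theta_2)|_{\partial B_R}=0$ to replace $\bs\theta_2\cdot\mb e_r$ by $\bar\rho'(R)^{-1}\grad\cdot(\bar\rho\bs\theta_2)$, and then check that the nonlocal terms are symmetric. Your single form $-\iint \d\mu_2(\mb x)\,\d\mu_1(\mb z)/|\mb x-\mb z|$ is a compact version of the paper's term-by-term check of the four double integrals, and your signs, including the $+\bar\rho'(R)^{-2}$ coefficient of $\K_\partial$ on the boundary, are correct.
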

\begin{proof}
Since $(\grad\cdot\bs\theta_2)|_{\partial B_R}=0$, we have $\grad\cdot(\bar\rho\bs\theta_2)|_{\partial B_R}=\bar\rho'(R)\bs\theta_2\cdot\mb e_r$. So we have
\begin{align*}
\<\mb L\bs\theta_1,\bs\theta_2\>_{\bar\rho}&=\int_{B_R}\grad\cdot(\bar\rho\bs\theta_2)\brac{\gamma\bar\rho^{\gamma-2}+\K-\bar\rho'(R)^{-1}\K_{\partial}}\grad\cdot(\bar\rho\bs\theta_1)\d\mb x\\
&\quad-\int_{\partial B_R}\brac{\gamma+\K-\bar\rho'(R)^{-1}\K_{\partial}}\grad\cdot(\bar\rho\bs\theta_1)\bs\theta_2\cdot\d\mb S\\
&=\int_{B_R}\grad\cdot(\bar\rho\bs\theta_2)\brac{\gamma\bar\rho^{\gamma-2}+\K-\bar\rho'(R)^{-1}\K_{\partial}}\grad\cdot(\bar\rho\bs\theta_1)\d\mb x\\
&\quad-\int_{\partial B_R}\bar\rho'(R)^{-1}\grad\cdot(\bar\rho\bs\theta_2)\brac{\gamma+\K-\bar\rho'(R)^{-1}\K_{\partial}}\grad\cdot(\bar\rho\bs\theta_1)\d S.
\end{align*}
Note this is defined for $\bs\theta_i$ ($i=1,2$) such that $\grad\cdot(\bar\rho\bs\theta_i)\in H^1(B_R)$. We see that $\mb L$ is symmetric under $\<\ph,\ph\>_{\bar\rho}$ since
\begin{align*}
\int_{B_R}\grad\cdot(\bar\rho\bs\theta_2)\K\grad\cdot(\bar\rho\bs\theta_1)\d\mb x
&=-\int_{B_R}\int_{B_R}{\grad\cdot(\bar\rho\bs\theta_2)(\mb x)\grad\cdot(\bar\rho\bs\theta_1)(\mb y)\over|\mb x-\mb y|}\d\mb x\d\mb y\\
\int_{\partial B_R}\grad\cdot(\bar\rho\bs\theta_2)\K_{\partial}\grad\cdot(\bar\rho\bs\theta_1)\d S
&=-\int_{\partial B_R}\int_{\partial B_R}{\grad\cdot(\bar\rho\bs\theta_2)(\mb x)\grad\cdot(\bar\rho\bs\theta_1)(\mb y)\over|\mb x-\mb y|}\d S(\mb x)\d S(\mb y)\\
\int_{B_R}\grad\cdot(\bar\rho\bs\theta_2)\K_{\partial}\grad\cdot(\bar\rho\bs\theta_1)\d\mb x
&=-\int_{B_R}\int_{\partial B_R}{\grad\cdot(\bar\rho\bs\theta_1)(\mb y)\grad\cdot(\bar\rho\bs\theta_2)(\mb x)\over|\mb x-\mb y|}\d S(\mb y)\d\mb x\\
\int_{\partial B_R}\grad\cdot(\bar\rho\bs\theta_2)\K\grad\cdot(\bar\rho\bs\theta_1)\d S
&=-\int_{B_R}\int_{\partial B_R}{\grad\cdot(\bar\rho\bs\theta_2)(\mb y)\grad\cdot(\bar\rho\bs\theta_1)(\mb x)\over|\mb x-\mb y|}\d S(\mb y)\d\mb x.\qedhere
\end{align*}
\end{proof}

\begin{remark}
Note that for solutions $\bs\theta$ of \eqref{E:LINDYN}, using the symmetry of $\mb L$, we have
\begin{align*}
0=\<\partial_t^2\bs\theta+\mb L\bs\theta,\partial_t\bs\theta\>_{\bar\rho}={1\over 2}\partial_t\brac{\<\partial_t\bs\theta,\partial_t\bs\theta\>_{\bar\rho}+\<\mb L\bs\theta,\bs\theta\>_{\bar\rho}}.
\end{align*}
Therefore, $\|\partial_t\bs\theta\|_{\bar\rho}^2+\<\mb L\bs\theta,\bs\theta\>_{\bar\rho}$ is a conserved quantity. If $\mb L$ is coercive in the sense that $\<\mb L\bs\theta,\bs\theta\>_{\bar\rho}\gtrsim\|\bs\theta\|_{H^s(B_R)}^2$, this means the norm of the perturbation $\|\bs\theta\|_{H^s(B_R)}$ stays bounded for all time, hence stability. Therefore the study of stability of liquid LE stars boils down to the study of the positivity of $\mb L$.
\end{remark}

\begin{lemma}[$\K,\grad$ commutators]\label{commutators}
$[\K,\grad]=\K_{\partial}\mb e_r$.
\end{lemma}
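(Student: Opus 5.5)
The identity $[\K,\grad]=\K_{\partial}\mb e_r$ is a statement about how the truncated Newtonian potential interacts with differentiation, so I will prove it by differentiating under the integral and integrating by parts. I read it as acting on a scalar function $f$ defined on $\bar B_R$ (smooth enough, say $f\in H^1(B_R)$ or $C^1(\bar B_R)$), with $\K\grad f$ meaning the vector whose $i$-th component is $\K\partial_i f$, and $\K_{\partial}\mb e_r f$ meaning $\K_{\partial}$ applied to the boundary trace of $f\,\mb e_r$ componentwise. Concretely, the claim is that for every $\mb x\in\R^3$
\begin{align*}
\K\grad f(\mb x)-\grad\K f(\mb x)=\K_{\partial}(f\mb e_r)(\mb x)=-\int_{\partial B_R}{f(\mb y)\mb e_r(\mb y)\over|\mb x-\mb y|}\d S(\mb y).
\end{align*}

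First I compute $\grad\K f$ directly. Using $\K f(\mb x)=-\int_{B_R}f(\mb y)|\mb x-\mb y|^{-1}\d\mb y$ and the fact that the kernel depends only on $\mb x-\mb y$, I have $\grad_{\mb x}|\mb x-\mb y|^{-1}=-\grad_{\mb y}|\mb x-\mb y|^{-1}$. Moving the derivative inside the integral then gives
\begin{align*}
\grad\K f(\mb x)=\int_{B_R}f(\mb y)\,\grad_{\mb y}{1\over|\mb x-\mb y|}\d\mb y .
\end{align*}
Next I integrate by parts in $\mb y$ over $B_R$. The outward normal on $\partial B_R$ is $\mb e_r$, so
\begin{align*}
\grad\K f(\mb x)=\int_{\partial B_R}{f(\mb y)\mb e_r(\mb y)\over|\mb x-\mb y|}\d S(\mb y)-\int_{B_R}{\grad f(\mb y)\over|\mb x-\mb y|}\d\mb y .
\end{align*}
The second term is exactly $\K\grad f(\mb x)$ and the first is $-\K_{\partial}(f\mb e_r)(\mb x)$. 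Rearranging, $\K\grad f-\grad\K f=\K_{\partial}(f\mb e_r)$, which is the identity $[\K,\grad]=\K_\partial\mb e_r$.

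The main obstacle is justifying these two steps, because the kernel $|\mb x-\mb y|^{-1}$ is singular at $\mb y=\mb x$.

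For differentiation under the integral sign, the gradient of the kernel is $O(|\mb x-\mb y|^{-2})$, which is locally integrable in $\R^3$. Hence $\K f\in C^1$ for bounded $f$, which is standard Newtonian-potential theory.

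For the integration by parts, when $\mb x\in B_R$ I excise a small ball $B_\varepsilon(\mb x)$, integrate by parts on $B_R\setminus B_\varepsilon(\mb x)$, and let $\varepsilon\to0$. The extra boundary term on $\partial B_\varepsilon(\mb x)$ is bounded by $\|f\|_\infty\,\varepsilon^{-1}\cdot4\pi\varepsilon^2\to0$, so it vanishes in the limit. When $\mb x\in\partial B_R$ the same argument applies on the truncated region, and the resulting surface integral $\K_\partial(f\mb e_r)$ is still absolutely convergent since $|\mb x-\mb y|^{-1}$ is integrable on a 2D surface. For $\mb x\notin\bar B_R$ there is no singularity at all.

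Finally, I extend from smooth $f$ to $H^1$ by density, using continuity of $\K$ and of the trace map into $L^2(\partial B_R)$ composed with $\K_\partial$.
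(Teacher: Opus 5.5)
Your proof is correct and is essentially the paper's argument: the paper starts from $\K\grad h$, integrates by parts in $\mb y$ to move the gradient onto the kernel (producing the boundary term $\K_{\partial}(\mb e_r h)$), and then uses $\grad_{\mb y}|\mb x-\mb y|^{-1}=-\grad_{\mb x}|\mb x-\mb y|^{-1}$ to pull the derivative outside. This is your computation run in the opposite order. Your extra justification of the singular kernel (differentiating under the integral, excising $B_\varepsilon(\mb x)$) and the density extension to $H^1$ are details the paper omits, and they are sound.
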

\begin{proof}
We have
\begin{align*}
(\K\grad h)(\mb x)&=-\int_{B_R}{\grad h(\mb y)\over|\mb x-\mb y|}\d\mb y
=\int_{B_R}h(\mb y)\grad_{\mb y}{1\over|\mb x-\mb y|}\d\mb y-\int_{\partial B_R}{h(\mb y)\over|\mb x-\mb y|}\d\mb S(\mb y)\\
&=-\int_{B_R}h(\mb y)\grad_{\mb x}{1\over|\mb x-\mb y|}\d\mb y-\int_{\partial B_R}{h(\mb y)\over|\mb x-\mb y|}\d\mb S(\mb y)\\
&=(\grad\K h)(\mb x)+\K_{\partial}(\mb e_rh).\qedhere
\end{align*}
\end{proof}

\begin{proposition}[Translational eigenfunctions]\label{Eigenfunctions for L}
$\mb e_i$ are eigenfunctions for $\mb L$ with eigenvalue 0.
\end{proposition}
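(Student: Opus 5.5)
We need $\mb L\mb e_i=\mb 0$. The plan is to use the first line of \eqref{E:LINDEF},
\[
\mb L\bs\theta=-\grad\brac{(\gamma\bar\rho^{\gamma-2}+\K)\grad\cdot(\bar\rho\bs\theta)+\int_{\partial B_R}{\bs\theta(\mb z)\over|\mb x-\mb z|}\cdot\d\mb S(\mb z)},
\]
with $\bs\theta=\mb e_i$, and to show that the bracket is a constant on $B_R$. The translation $\bs\eta=\mb x+\epsilon\mb e_i$ is an exact family of equilibria, so we expect exactly this.

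First, $\grad\cdot(\bar\rho\mb e_i)=\partial_i\bar\rho$ on $B_R$, where $\bar\rho$ is smooth up to the boundary. Hence $\gamma\bar\rho^{\gamma-2}\partial_i\bar\rho=\partial_i\brac{\tfrac{\gamma}{\gamma-1}\bar\rho^{\gamma-1}}$ for $\gamma>1$, and $\partial_i\ln\bar\rho$ for $\gamma=1$.

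Next, we apply Lemma \ref{commutators} to $h=\bar\rho$:
\[
\K\partial_i\bar\rho=\partial_i\K\bar\rho+\K_\partial(\mb e_r\cdot\mb e_i\,\bar\rho).
\]
Since $\bar\rho=1$ on $\partial B_R$ and $\d\mb S=\mb e_r\,\d S$, we get
\[
\K_\partial(\mb e_r\cdot\mb e_i\,\bar\rho)(\mb x)=-\int_{\partial B_R}{\mb e_i\cdot\mb e_r\over|\mb x-\mb z|}\d S(\mb z),
\]
which is exactly minus the boundary integral term in $\mb L\mb e_i$. The two surface contributions therefore cancel.

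The bracket then reduces to
\[
\partial_i\brac{\tfrac{\gamma}{\gamma-1}\bar\rho^{\gamma-1}+\K\bar\rho}.
\]
By \eqref{E:equation for bar-w}, the gradient of $\tfrac{\gamma}{\gamma-1}\bar\rho^{\gamma-1}+\K\bar\rho$ vanishes on $B_R$ (with the logarithmic analogue from \eqref{LE ODE pre} when $\gamma=1$). So the bracket is zero, and $\mb L\mb e_i=\mb 0$.

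Finally, for $\mb e_i$ to lie in the relevant class we check the boundary condition in \eqref{E:LINDYN}: $\grad\cdot\mb e_i=0$ holds trivially.

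The main point needing care is the sign and normalisation bookkeeping in the commutator step. One must confirm that $\K_\partial$ acting on $\mb e_r h$ produces precisely $-\int_{\partial B_R}\bs\theta\cdot\d\mb S/|\mb x-\mb z|$ with the convention $\K_\partial f=-\int_{\partial B_R}f/|\mb x-\mb y|\,\d S$. One must also use that $\bar\rho|_{\partial B_R}=1$, which is where the liquid normalisation enters; this makes the cancellation exact rather than leaving a multiple of the boundary term.
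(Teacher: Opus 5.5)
Your proposal is correct and is essentially the paper's argument: both combine the Lane--Emden equation \eqref{E:equation for bar-w} with the commutator Lemma \ref{commutators}, so that the bracket reduces to cancelling surface terms. The only difference is cosmetic. You work with the first line of \eqref{E:LINDEF}, where the explicit boundary integral cancels $\K_\partial(\mb e_r\cdot\mb e_i\,\bar\rho)$ because $\bar\rho|_{\partial B_R}=1$; the paper uses the second line, where the $-\bar\rho'(R)^{-1}\K_\partial$ term does the cancelling because $\grad\bar\rho=\bar\rho'(R)\mb e_r$ on $\partial B_R$.
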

\begin{proof}
Let $\mb f\in\mathbb R^3$ be a constant vector. Since $\mb 0={\gamma\over \gamma-1}\grad\bar\rho^{\gamma-1}+\grad\K\bar\rho$ \eqref{LE ODE} and $[\K,\grad]\bar\rho=\K_{\partial}\mb e_r\bar\rho=\K_{\partial}\mb e_r$ (Lemma \ref{commutators}), we have
\begin{align*}
\mb L\mb f &=-\grad\brac{(\gamma\bar\rho^{\gamma-2}+\K-\bar\rho'(R)^{-1}\K_{\partial})\grad\cdot(\bar\rho\mb f)}\\
&=-\grad\brac{\mb f\cdot(\gamma\bar\rho^{\gamma-2}+\K-\bar\rho'(R)^{-1}\K_{\partial})\grad\bar\rho}\\
&=-\grad\brac{\mb f\cdot(\K_{\partial}\mb e_r-\bar\rho'(R)^{-1}\K_{\partial}\grad\bar\rho)}
=\mb 0.\qedhere
\end{align*}
\end{proof}

\begin{remark}\label{Translational stability}
  These eigenfunctions correspond to the Galilean invariance of our system and the three directions of momentum conservation. For any constant vector $\mb f$, we see that $\bs\theta=t\mb f$ is a solution to our systems \eqref{E:LINDYN} and \eqref{E:EP around LE}. This is just the LE star translated by a constant velocity in the direction of $\mb f$. So even though here the perturbation $\bs\theta$ is growing in time, it does not represent any real instability of the star.
\end{remark}

The main result of this paper is that $\mb L$ is non-negative; and if the perturbation $\bs\theta$ is orthogonal to the three eigenfunctions from Proposition~\ref{Eigenfunctions for L}, then the operator $\mb L$ has a quantitative lower bound as a function of $g=\grad\cdot(\bar\rho\bs\theta)$.

\begin{theorem}[Boundedness and non-negativity of $\mb L$]\label{linear operator coercivity}
Let $\mb L$ be as defined in \eqref{E:LINDEF} and
\[H=\{\bs\theta\in H^2(B_R):(\grad\cdot\bs\theta)|_{\partial B_R}=0\}.\]
Then for $\bs\theta\in H$ we have
\begin{align}\label{L upper bound}
\<\mb L\bs\theta,\bs\theta\>_{\bar\rho}\lesssim\|\grad\cdot(\bar\rho\bs\theta)\|_{L^2(B_R)}^2+{1\over|\bar\rho'(R)|}\|\grad\cdot(\bar\rho\bs\theta)\|_{L^2(\partial B_R)}^2.
\end{align}
And moreover,
\begin{enumerate}[i.]
\item When either $\gamma\geq 4/3$ or $\bar\rho(0)$ is close to 1, we have
\begin{align*}
\inf_{\|\bs\theta\|_H=1}\<\mb L\bs\theta,\bs\theta\>_{\bar\rho}\geq 0.
\end{align*}
And if in addition $\bs\theta\in H$ is such that $\<\bs\theta,\mb e_i\>_{\bar\rho}=0$ for all $i=1,2,3$, then we have
\begin{align}\label{L lower bound}
\<\mb L\bs\theta,\bs\theta\>_{\bar\rho}\gtrsim\|\grad\cdot(\bar\rho\bs\theta)\|_{L^2(B_R)}^2+{1\over\bar\rho'(R)^2}\|\grad\cdot(\bar\rho\bs\theta)\|_{L^2(\partial B_R)}^2.
\end{align}
\item When $\gamma<4/3$ and $\bar\rho(0)$ is large, we have
\begin{align*}
\inf_{\|\bs\theta\|_H=1}\<\mb L\bs\theta,\bs\theta\>_{\bar\rho}<0.
\end{align*}
\end{enumerate}
\end{theorem}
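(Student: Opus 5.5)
The plan is to reduce everything to a quadratic form in the single scalar $g:=\grad\cdot(\bar\rho\bs\theta)$. By the symmetry lemma, $\<\mb L\bs\theta,\bs\theta\>_{\bar\rho}$ depends on $\bs\theta$ only through $g$ on $B_R$ and its trace on $\partial B_R$. Introduce the signed measure
\[
\mu=g\,\d\mb x-\bar\rho'(R)^{-1}g\,\d S ,
\]
with $g\,\d S$ supported on $\partial B_R$. Since $\bar\rho'(R)<0$ and $\bar\rho(R)=1$, the form becomes
\[
Q(g)=\int_{B_R}\gamma\bar\rho^{\gamma-2}g^2\,\d\mb x+\frac{\gamma}{|\bar\rho'(R)|}\int_{\partial B_R}g^2\,\d S-\int\!\!\int\frac{\d\mu(\mb x)\,\d\mu(\mb y)}{|\mb x-\mb y|}.
\]
The first two terms are the positive pressure part $P(g)$ and the last is the gravitational self-energy $G(g)$. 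Using $\grad\cdot\bs\theta=0$ on $\partial B_R$ and $\bar\rho=1$ there, $\mu$ has total mass $\int_{\partial B_R}\bar\rho\,\bs\theta\cdot\mb n\,\d S-\int_{\partial B_R}\bs\theta\cdot\mb n\,\d S=0$.

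The upper bound \eqref{L upper bound} then follows from boundedness of the pieces. The coefficient $\gamma\bar\rho^{\gamma-2}$ is bounded. $\K$ is bounded on $L^2(B_R)$. The mixed and boundary Newtonian potentials are bounded by Cauchy--Schwarz with the Hardy--Littlewood--Sobolev and trace-type estimates for $|\mb x-\mb y|^{-1}$. This bounds $G(g)$ by $\|g\|_{L^2(B_R)}^2+|\bar\rho'(R)|^{-2}\|g\|_{L^2(\partial B_R)}^2$. I expect the stated $|\bar\rho'(R)|^{-1}$ weight to come from bounding $|\bar\rho'(R)|^{-1}$ below using Lemma \ref{Lane--Emden profile derivative prop}, since $|\bar\rho'(R)|\sim R$ up to constants.

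For the sign statements, expand $g=\sum_{l,m}g_{lm}(r)Y_{lm}$. Since $\K$ and $\K_\partial$ commute with rotations, $Q=\sum_{l,m}Q_l(g_{lm})$, where in $G_l$ the kernel $|\mb x-\mb y|^{-1}$ is replaced by $\frac{4\pi}{2l+1}\frac{r_<^l}{r_>^{l+1}}$.

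The case $l=0$ is exactly the radial operator. It is non-negative (and coercive, since the mass-zero constraint kills nothing nontrivial) when $\gamma\ge4/3$ or $\bar\rho(0)$ is close to $1$, and it has a negative direction when $\gamma<4/3$ and $\bar\rho(0)$ is large. This is the content of \cite{Lam_2024}, which I would invoke directly. Inserting the radial destabilising test function gives part ii.

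For $l\ge1$, note that $\frac{1}{2l+1}\frac{r_<^l}{r_>^{l+1}}$ is the Green's function of
\[
-r^{-2}\partial_r(r^2\partial_r)+l(l+1)r^{-2}\quad\text{on }(0,\infty).
\]
Since the potential $l(l+1)r^{-2}$ is increasing in $l$, operator monotonicity of the inverse gives $G_l\le G_1$ in the form sense, strictly for $l\ge2$. Hence $Q_l\ge Q_1$.

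For $l=1$, Proposition \ref{Eigenfunctions for L} shows that $g=\mb e_i\cdot\grad\bar\rho=\bar\rho'(r)x_i/r$ lies in the kernel. Its radial profile $\bar\rho'(r)$ has one sign on $(0,R]$. A Sturm/ground-state argument (writing $g_{1m}=\bar\rho'\,w$ and completing the square, or a Perron--Frobenius argument for the positive kernel) should show that $Q_1\ge0$ with kernel exactly this one-dimensional profile per $m$.

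Orthogonality $\<\bs\theta,\mb e_i\>_{\bar\rho}=0$ must then be shown to remove this kernel. I would use $\int_{B_R}x_i\,g\,\d\mb x=-\int_{B_R}\bar\rho\,\theta_i\,\d\mb x+\int_{\partial B_R}x_i\,\bar\rho\,\bs\theta\cdot\mb n\,\d S$. Combined with the boundary identity, this says the dipole moment of $\mu$ vanishes, which excludes the kernel profile.

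Coercivity \eqref{L lower bound} then follows from a compactness argument. $G$ is compact relative to $P$, and $P$ is comparable to $\|g\|_{L^2(B_R)}^2+|\bar\rho'(R)|^{-1}\|g\|_{L^2(\partial B_R)}^2$. So a minimizing sequence with $P=1$ and $Q\to0$ would converge to a nonzero kernel element, which is a contradiction.

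The main obstacle is the $l=1$ step: establishing that $\bar\rho'(r)$ is the ground state of the nonlocal form $Q_1$, which includes the boundary layer term, rather than merely a kernel element. I would first try the substitution $g=\bar\rho'\,w$ together with the Lane--Emden equation, to rewrite $Q_1$ as a manifestly non-negative integral of $|\partial_r w|^2$-type quantities of the induced potential.
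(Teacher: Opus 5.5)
Your route is viable and genuinely different from the paper's, though you leave the key $l=1$ step unproven.

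\textbf{How the paper argues.} The paper never compares modes. It substitutes $g_{lm}=(\bar\rho\chi_{lm})'r^{l-1}$ and uses the Lane--Emden equation to obtain the exact identity \eqref{chi_lm energy two}. Both terms there are non-negative for every $l\ge1$. The $l=1$ kernel is visibly $\chi_{1m}=\mathrm{const}$, and the dipole condition becomes $\int_0^R\bar\rho\chi_{1m}r^2\,\d r=0$. Poincar\'e, trace and Lemma \ref{Lane--Emden profile derivative prop} then give \eqref{L lower bound} with explicit constants and no compactness.

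\textbf{How your route differs.} You reduce all $l\ge1$ to $l=1$ by monotonicity in $l$ of the multipole Green's functions. This is correct, boundary atom included, via the variational formula for $\langle\nu,A_l^{-1}\nu\rangle$. You then identify the translation profile as the $l=1$ ground state and obtain coercivity by compactness. This explains structurally why positivity holds: higher multipoles couple more weakly to gravity, and the translational zero mode is one-signed. The price is non-explicit constants.

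\textbf{Closing the $l=1$ step.} The step you flag as the main obstacle does close with the tool you name, in nonlocal form. Work on $X=[0,R]\sqcup\{R\}$, with measure $r^2\d r$ plus an atom of mass $R^2$ carrying the boundary value. Write $Q_1(f)=\int pf^2\,\d m-\iint K(x,y)f(x)f(y)\,\d m\,\d m$ with $K>0$. Since $f_0=-\bar\rho'>0$ satisfies $pf_0=Kf_0$ (this is $\mb L\mb e_i=\mb 0$), you get
\[
Q_1(f_0u)=\tfrac12\iint K(x,y)f_0(x)f_0(y)\,\bigl(u(x)-u(y)\bigr)^2\,\d m(x)\,\d m(y)\ \ge0,
\]
with equality iff $u$ is constant. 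Equivalently, apply Jentzsch's theorem to $P^{-1/2}G_1P^{-1/2}$. No local $|w'|^2$ rewriting is needed.

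\textbf{Two smaller points.}
\begin{enumerate}
\item The upper bound \eqref{L upper bound} needs no HLS or trace estimates. The Newtonian kernel is positive definite, so $\iint\d\mu\,\d\mu/|\mb x-\mb y|\ge0$ and hence $Q\le P$. This gives exactly the $|\bar\rho'(R)|^{-1}$ weight; the paper gets the same by dropping $-4\pi(\bar\rho\chi_{lm})^2$ in \eqref{chi_lm energy one}.
\item In your compactness step, bulk and boundary values decouple in the weak limit. There the mass-zero constraint is essential for $l=0$, not innocuous. Without it, $Q_0<0$ at $(g,b)=(\bar\rho'/r,\bar\rho'(R)/R)$, which is the $\chi_{00}=1$ profile of the paper's remark. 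The constraint is preserved under weak limits, so you only need to keep it in the closed subspace on which you invoke Theorem \ref{radial positivity}.
\end{enumerate}
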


The proof of this theorem will be given in Section \ref{Proof of Theorem section}. For an outline of the proof and how the method differs from related works, see the beginning of Section \ref{Proof of Theorem section}.

\begin{remark}
\eqref{L lower bound} implies that
\begin{align*}
\<\mb L\bs\theta,\bs\theta\>_{\bar\rho}\gtrsim\|\lpc\Psi\|_{L^2(B_R)}^2+\|\grad\Psi\|_{L^2(\R^3)}^2
\end{align*}
where $\Psi$ is the gravitational potential induced by the flow disturbance $\bar\rho\bs\theta$:
\begin{align*}
\Psi&:={1\over 4\pi}\K\grad\cdot(\bar\rho\bs\theta)\\
\lpc\Psi&\;=\grad\cdot(\bar\rho\bs\theta)=:g.
\end{align*}
This is because, using Hardy's inequality $\|\Psi/|\mb x|\|_{L^2}\leq 2\|\grad\Psi\|_{L^2}$,
\begin{align*}
\|\grad\Psi\|_{L^2(\R^3)}^2
=\<\Psi,g\>_{L^2(B_R)}
\leq\norm{\Psi\over|\mb x|}_{L^2(B_R)}\norm{|\mb x|g}_{L^2(B_R)}
\leq 2\norm{\grad\Psi}_{L^2(\R^3)}R\norm{g}_{L^2(B_R)}.
\end{align*}
Therefore $\|\grad\Psi\|_{L^2(\R^3)}\leq 2R\|g\|_{L^2(B_R)}$.
\end{remark}

\medskip

\begin{corollary}[Kernel of $\mb L$]
In case i.~of theorem \ref{linear operator coercivity}, $\ker\mb L = \Span\{\mb e_1,\mb e_2,\mb e_3\}\cup\{\bs\theta:\grad\cdot(\bar\rho\bs\theta)=0\}$; and in case ii.~we have $\supseteq$ instead of $=$. In particular, $\ker\mb L$ is infinite dimensional and each element of the kernel corresponds to a linearly growing solution to the linearised Euler--Poisson system \ref{E:LINDYN} around liquid LE profile.
\end{corollary}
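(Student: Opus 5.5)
The corollary follows from the structure of $\mb L$ together with Theorem~\ref{linear operator coercivity}, and I would organise the argument around the divergence $g=\grad\cdot(\bar\rho\bs\theta)$.

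\emph{Step 1: the inclusion $\supseteq$ in both cases.} Since $\mb L\bs\theta=-\grad\brac{(\gamma\bar\rho^{\gamma-2}+\K-\bar\rho'(R)^{-1}\K_{\partial})\,g}$, any $\bs\theta\in H$ with $g=0$ satisfies $\mb L\bs\theta=\mb 0$. Proposition~\ref{Eigenfunctions for L} gives $\mb L\mb e_i=\mb 0$. Because $\mb L$ is linear, the kernel therefore contains the span of these two sets, which is how I would read the ``$\cup$'' in the statement.

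The set $\{g=0\}$ is infinite dimensional. For example, take $\bs\theta=\bar\rho^{-1}\grad\times\mb w$ with $\mb w$ smooth and compactly supported in $B_R$. Then $g=0$, and the condition $(\grad\cdot\bs\theta)|_{\partial B_R}=0$ holds trivially, so $\bs\theta\in H$.

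For each kernel element $\bs\theta_0$, the function $\bs\theta(t)=t\bs\theta_0$ solves \eqref{E:LINDYN}, since $\partial_t^2\bs\theta=0$ and $\mb L\bs\theta=t\mb L\bs\theta_0=\mb 0$. These are the linearly growing solutions.

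\emph{Step 2: the inclusion $\subseteq$ in case i.} Let $\bs\theta\in\ker\mb L$. Decompose $\bs\theta=\bs\theta^\perp+\sum_i c_i\mb e_i$, choosing the constants $c_i$ so that $\<\bs\theta^\perp,\mb e_j\>_{\bar\rho}=0$ for $j=1,2,3$. This is possible because the Gram matrix $\<\mb e_i,\mb e_j\>_{\bar\rho}=\delta_{ij}\int_{B_R}\bar\rho$ is invertible. Constant vectors lie in $H$, so $\bs\theta^\perp\in H$.

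Since $\mb L\mb e_i=\mb 0$, we get $\mb L\bs\theta^\perp=\mb 0$, and hence $\<\mb L\bs\theta^\perp,\bs\theta^\perp\>_{\bar\rho}=0$. The lower bound \eqref{L lower bound} then gives $\|\grad\cdot(\bar\rho\bs\theta^\perp)\|_{L^2(B_R)}=0$. So $\bs\theta^\perp$ lies in $\{g=0\}$, and $\bs\theta$ lies in the span of the $\mb e_i$ together with $\{g=0\}$.

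\emph{Step 3: case ii.} Here Theorem~\ref{linear operator coercivity} yields no lower bound, so only the inclusion $\supseteq$ from Step 1 is claimed.

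The only point needing care is that $\bs\theta^\perp$ genuinely belongs to $H$, so that \eqref{L lower bound} may be applied to it. This holds because $\grad\cdot\mb e_i=0$. The rest is bookkeeping.
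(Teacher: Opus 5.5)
Your proof is correct and follows the paper's argument. The inclusion $\supseteq$ comes from \eqref{E:LINDEF} and Proposition~\ref{Eigenfunctions for L}, and equality in case i.\ comes from \eqref{L lower bound} after projecting off the $\mb e_i$ (the paper leaves this projection implicit; it is the same step as in the proof of Theorem~\ref{linear operator coercivity}). Infinite dimensionality comes from $\bar\rho^{-1}\grad\times\mb w$, and $t\bs\theta_0$ gives the linearly growing solutions. Your reading of the ``$\cup$'' as the sum of the two subspaces is the right one. To make the infinite dimensionality airtight, choose the $\mb w$ with disjoint supports and nonzero curl, as the paper does with disjoint balls.
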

\begin{proof}
From \eqref{E:LINDEF} and Lemma \ref{Eigenfunctions for L}, we see that $\{\mb e_1,\mb e_2,\mb e_3\}\cup\{\bs\theta:\grad\cdot(\bar\rho\bs\theta)=0\}\subseteq\ker\mb L$. And in case i.~by \eqref{L lower bound} we see that the $\subseteq$ is in fact an equality. For any smooth enough vector field $\mb A$, we see that $\bs\theta=\bar\rho^{-1}\grad\times\mb A$ is such that $\grad\cdot(\bar\rho\bs\theta)=0$. We can pick infinitely many disjoint balls in $B_R$, and for each such ball there exists a non-zero $\mb A$ that is supported entirely in it. This gives us infinitely many elements of the kernel of $\mb L$ with disjoint support and hence are linearly independent. If $\Theta\in\ker\mb L$, then we see that $\bs\theta=t\Theta$ is a solution to \eqref{E:LINDYN}.
\end{proof}

\begin{remark}
This infinite-dimensional kernel is analogous to that found for the gaseous Lane--Emden stars in \cite{Jang_Makino_2020}, which also contain the constant vectors and all vector fields for which the analogous weighted divergence vanish.
\end{remark}

We have seen how the unit vectors $\{\mb e_1,\mb e_2,\mb e_3\}$ do not give rise to genuine instability in remark \ref{Translational stability}. But the $\{\bs\theta:\grad\cdot(\bar\rho\bs\theta)=0\}$ kernel could potentially give rise to unstable perturbations of the non-linear system as well as the linear one. If we restrict to irrotational perturbations, however, then the infinite kernel is modded out and we can get strict coercivity.

\begin{corollary}[Coercivity of $\mb L$]\label{linear operator coercivity cor}
Suppose case i. of Theorem \ref{linear operator coercivity} holds. If $\bs\theta\in H$ is irrotational and $\<\bs\theta,\mb e_i\>_{\bar\rho}=0$ for all $i=1,2,3$, then we have
\begin{align}\label{L lower bound cor}
\<\mb L\bs\theta,\bs\theta\>_{\bar\rho}\gtrsim\|\bs\theta\|_{L^2(B_R)}^2+\|\bs\theta\cdot\mb e_r\|_{L^2(\partial B_R)}^2.
\end{align}
\end{corollary}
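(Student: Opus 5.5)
Starting from \eqref{L lower bound} of Theorem \ref{linear operator coercivity}, it suffices to show that for irrotational $\bs\theta\in H$ with $\<\bs\theta,\mb e_i\>_{\bar\rho}=0$,
\begin{align*}
\|\bs\theta\|_{L^2(B_R)}^2+\|\bs\theta\cdot\mb e_r\|_{L^2(\partial B_R)}^2\lesssim\|g\|_{L^2(B_R)}^2+\|g\|_{L^2(\partial B_R)}^2,\qquad g:=\grad\cdot(\bar\rho\bs\theta).
\end{align*}
The factor $\bar\rho'(R)^{-2}$ is a fixed positive constant and is absorbed into $\lesssim$. The boundary term is immediate. Since $\grad\cdot\bs\theta=0$ on $\partial B_R$, we have $g|_{\partial B_R}=\bar\rho'(R)\,\bs\theta\cdot\mb e_r$, so $\|\bs\theta\cdot\mb e_r\|_{L^2(\partial B_R)}\lesssim\|g\|_{L^2(\partial B_R)}$. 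The work is therefore the interior $L^2$ bound on $\bs\theta$.

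For the interior bound I use irrotationality. On the simply connected ball $B_R$, write $\bs\theta=\grad\varphi$ with $\varphi\in H^3(B_R)$, normalised so that $\int_{B_R}\bar\rho\varphi=0$. Then $\varphi$ solves the weighted elliptic Neumann-type problem
\begin{align*}
\grad\cdot(\bar\rho\grad\varphi)=g\ \text{ in } B_R,\qquad \bar\rho\,\partial_r\varphi=R_0:=\bar\rho'(R)^{-1}g \ \text{ on }\partial B_R,
\end{align*}
where $\bar\rho=1$ on the boundary. The weight satisfies $1\le\bar\rho\le\bar\rho(0)$, so it is uniformly elliptic. Testing the equation with $\varphi$ and integrating by parts gives
\begin{align*}
\int_{B_R}\bar\rho|\grad\varphi|^2=\int_{\partial B_R}\varphi\,\bar\rho'(R)^{-1}g\,\d S-\int_{B_R}\varphi g.
\end{align*}
Next I need Poincaré and trace bounds, $\|\varphi\|_{L^2(B_R)}+\|\varphi\|_{L^2(\partial B_R)}\lesssim\|\grad\varphi\|_{L^2}$. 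These hold once $\varphi$ has zero weighted mean, by the standard compactness argument. Cauchy--Schwarz and absorption then give $\|\bs\theta\|_{L^2}^2\lesssim\|g\|_{L^2(B_R)}^2+\|g\|_{L^2(\partial B_R)}^2$, which is the claim.

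It remains to check where the orthogonality condition $\<\bs\theta,\mb e_i\>_{\bar\rho}=0$ enters. The normalisation of $\varphi$ is free, since $\varphi$ is only defined up to a constant. The orthogonality is needed earlier, to invoke \eqref{L lower bound}. It is not needed in the elliptic step: the constant gradients $\mb e_i=\grad x_i$ are themselves irrotational but have $g=\partial_i\bar\rho\neq0$, so no degeneracy arises there.

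The main obstacle is bookkeeping of regularity and of the boundary term. One must check that the Poincaré/trace constant is uniform, depending only on $R$ and $\bar\rho(0)$, and that $\varphi\in H^1$ with trace is justified for $\bs\theta\in H^2$. If the boundary term proves awkward, I would instead keep the Neumann data explicit and use $H^1$ duality, $\|\grad\varphi\|_{L^2}\lesssim\|g\|_{(H^1)^*}+\|\bar\rho'(R)^{-1}g\|_{H^{-1/2}(\partial B_R)}$, which is bounded by the right-hand side of \eqref{L lower bound}.
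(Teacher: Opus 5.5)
Your proposal is correct and is essentially the paper's proof. The boundary term comes directly from \eqref{L lower bound} via $g|_{\partial B_R}=\bar\rho'(R)\,\bs\theta\cdot\mb e_r$. The interior bound comes from writing $\bs\theta=\grad\varphi$, integrating $\int_{B_R}\bar\rho\,\bs\theta\cdot\grad\varphi$ by parts to get the same boundary-plus-bulk identity, and then applying Poincar\'e/trace, Cauchy--Schwarz/Young and absorption. The only difference is cosmetic: you normalise $\varphi$ to have zero $\bar\rho$-weighted mean where the paper uses zero plain mean, and this changes nothing.
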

\begin{proof}
Using \eqref{L lower bound} and the fact that $(\grad\cdot\bs\theta)|_{\partial B_R}=0$ we have
\begin{align*}
\<\mb L\bs\theta,\bs\theta\>_{\bar\rho}\gtrsim{1\over\bar\rho'(R)^2}\|\grad\cdot(\bar\rho\bs\theta)\|_{L^2(\partial B_R)}^2=\|\bs\theta\cdot\mb e_r\|_{L^2(\partial B_R)}^2.
\end{align*}
Since $\bs\theta$ is irrotational, $\bs\theta=\grad\phi$ for some scalar function $\phi$. We can assume $\phi$ has zero average on $B_R$ by replacing $\phi$ with $\phi-\int_{B_R}\phi\,\d\mb x$. Then by Poincaré inequality and trace we have
\begin{align*}
\|\phi\|_{L^2(B_R)}+\|\phi\|_{L^2(\partial B_R)}\lesssim\|\grad \phi\|_{L^2(B_R)}=\|\bs\theta\|_{L^2(B_R)}.
\end{align*}
Now
\begin{align*}
\|\bs\theta\|_{\bar\rho}^2
&=\int_{B_R}\bar\rho\bs\theta\cdot\grad\phi\,\d\mb x
=\int_{\partial B_R}\phi\bs\theta\cdot\d\mb S-\int_{B_R}\phi\grad\cdot(\bar\rho\bs\theta)\d\mb x\\
&=\int_{\partial B_R}\phi\bar\rho'(R)^{-1}\grad\cdot(\bar\rho\bs\theta)\d S-\int_{B_R}\phi\grad\cdot(\bar\rho\bs\theta)\d\mb x\\
&\lesssim\|\bs\theta\|_{L^2(B_R)}\brac{\|\grad\cdot(\bar\rho\bs\theta)\|_{L^2(B_R)}+{1\over|\bar\rho'(R)|}\|\grad\cdot(\bar\rho\bs\theta)\|_{L^2(\partial B_R)}}\\
&\lesssim\epsilon\|\bs\theta\|_{L^2(B_R)}^2+\epsilon^{-1}\<\mb L\bs\theta,\bs\theta\>_{\bar\rho}
\end{align*}
where in the last line we used Young's inequality and \eqref{L lower bound}. Taking $\epsilon$ small enough and absorbing the first term on the RHS to the LHS we are done.
\end{proof}

However, even if we restrict to such irrotational perturbations, $\|\grad\bs\theta\|_{L^2(B_R)}^2$ cannot be controlled by $\<\mb L\bs\theta,\bs\theta\>_{\bar\rho}$ as the following proposition shows. This suggests such liquid stars are not as stable as one might hope, especially for ``high frequency'' perturbations (corresponding to higher spherical harmonics):

\begin{proposition}[Failure to control derivative]\label{Failure to control derivative}
Take the setting of Corollary \ref{linear operator coercivity cor},
\[\<\mb L\bs\theta,\bs\theta\>_{\bar\rho}\not\gtrsim\|\grad\bs\theta\|_{L^2(B_R)}^2.\]
\end{proposition}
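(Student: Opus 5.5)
We construct a sequence of irrotational fields $\bs\theta_n=\grad\phi_n\in H$, each orthogonal to $\mb e_1,\mb e_2,\mb e_3$ in $\<\ph,\ph\>_{\bar\rho}$, along which $\<\mb L\bs\theta_n,\bs\theta_n\>_{\bar\rho}$ stays bounded while $\|\grad\bs\theta_n\|_{L^2(B_R)}\to\infty$. The upper bound \eqref{L upper bound} shows that it suffices to control
\[
g_n:=\grad\cdot(\bar\rho\bs\theta_n)=\bar\rho\lpc\phi_n+\bar\rho'\,\partial_r\phi_n
\]
in $L^2(B_R)$ and in $L^2(\partial B_R)$. We do this with high spherical harmonic modes, which have small divergence but large gradient.

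\textbf{Construction.} Take $\phi_n=r^lY_{lm}$ with $l=l_n\to\infty$ and $l\geq 2$. This is harmonic, so $\lpc\phi_n=0$ and $g_n=\bar\rho'\,l\,r^{l-1}Y_{lm}$. Two properties follow directly.
\begin{itemize}
\item The divergence vanishes, $\grad\cdot\bs\theta_n=\lpc\phi_n=0$, so $\bs\theta_n\in H$.
\item For $l\ge2$ the field $\grad(r^lY_{lm})$ has only angular modes of degree $l-1\ge1$. It is therefore orthogonal to the constants under the radial weight $\bar\rho$, giving $\<\bs\theta_n,\mb e_i\>_{\bar\rho}=0$.
\end{itemize}

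\textbf{Normalisation and the key estimates.} Using Lemma \ref{Lane--Emden profile derivative prop}, $|\bar\rho'|\lesssim r$, so
\[
\|g_n\|_{L^2(B_R)}^2\lesssim l^2\int_0^R r^{2l}r^2\,\d r\sim l\,R^{2l+1},\qquad \|g_n\|_{L^2(\partial B_R)}^2\sim \bar\rho'(R)^2\,l^2R^{2l}.
\]
On the other side, $\|\bs\theta_n\|_{L^2(B_R)}^2=\int_{\partial B_R}\phi_n\partial_r\phi_n\,\d S=lR^{2l+1}$. Since $\bs\theta_n$ is a gradient of a harmonic polynomial, its entries $\partial_j\phi_n$ are themselves harmonic of degree $l-1$, and applying the same identity gives
\[
\|\grad\bs\theta_n\|_{L^2}^2=\sum_j(l-1)R^{-1}\|\partial_j\phi_n\|_{L^2(\partial B_R)}^2\gtrsim l\cdot l^2R^{2l-1}.
\]
The last step uses $|\grad\phi_n|^2\ge|\partial_r\phi_n|^2=l^2R^{2l-2}|Y_{lm}|^2$ on the sphere. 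Hence, after normalising by $R^{-2l}$,
\[
\<\mb L\bs\theta_n,\bs\theta_n\>_{\bar\rho}\lesssim l+l^2\lesssim l^2,\qquad \|\grad\bs\theta_n\|^2\gtrsim l^3.
\]
The ratio is of order $l^{-1}\to0$, which proves the proposition.

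\textbf{Main obstacle.} The delicate point is the boundary term, where $g_n$ does not gain a factor of $l^{-1}$. We therefore have to compare exponents carefully: it scales like $l^2$, against the $l^3$ growth of the gradient norm. The argument is genuinely using that the gradient of an $l$-th harmonic carries an extra factor of $l$ over its trace. The gradient identity $\|\grad h\|_{L^2(B_R)}^2=\int_{\partial B_R}h\,\partial_rh\,\d S$ for harmonic $h$ should be checked for each component, with $\partial_j\phi_n$ of degree $l-1$. The boundary lower bound $\int_{\partial B_R}|\grad\phi_n|^2\,\d S\ge\int_{\partial B_R}|\partial_r\phi_n|^2\,\d S$ is immediate. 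If the boundary term in \eqref{L upper bound} turned out to be larger, an alternative would be to replace $r^l$ by $r^l\chi$ with a cutoff so that $\partial_r\phi$ vanishes at $R$. That route requires re-ensuring $\lpc\phi=0$ on $\partial B_R$, so the harmonic choice is preferable.
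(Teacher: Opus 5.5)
Your proposal is correct and is essentially the paper's proof: the same test fields $\bs\theta_l=\grad(r^lY_{lm})$, the same use of \eqref{L upper bound}, and the same $l^3$ versus $l^2$ comparison. The paper computes $\|\grad\bs\theta_l\|_{L^2(B_R)}^2=l(l-1)(2l+1)R^{2l-1}$ exactly, whereas you bound it from below via harmonicity of each component, and you additionally verify $\bs\theta_l\in H$ and $\<\bs\theta_l,\mb e_i\>_{\bar\rho}=0$ for $l\geq 2$, which the paper leaves implicit. The only slip is $l^2\int_0^R r^{2l+2}\,\d r\sim lR^{2l+3}$ rather than $lR^{2l+1}$, which is harmless since $R$ is fixed.
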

\begin{proof}
Let $\bs\theta_l=\grad(r^lY_{lm})$, then using \eqref{lpc of spherical harmonics} we have $\grad\cdot(\bar\rho\bs\theta_l)=l\bar\rho'r^{l-1}Y_{lm}$. We have
\begin{align*}
\|\grad\bs\theta_l\|_{L^2(B_R)}^2
&=\int_{B_R}\sum_{i,j\in\{1,2,3\}}\partial_i\partial_j(r^lY_{lm})\partial_i\partial_j(r^lY_{lm})\d\mb x\\
&=\int_{\partial B_R}\sum_{i,j\in\{1,2,3\}}\partial_i\partial_j(r^lY_{lm})\partial_j(r^lY_{lm})\mb e_i\cdot\d\mb S(x)\\
&\quad-\int_{\partial B_R}\sum_{i,j\in\{1,2,3\}}\partial_i\partial_i(r^lY_{lm})\partial_j(r^lY_{lm})\mb e_j\cdot\d\mb S(x)\\
&={1\over 2}\int_{\partial B_R}\partial_r|\grad(r^lY_{lm})|^2\d S(x)
={1\over 2}\int_{\partial B_R}\partial_r|lr^{l-1}Y_{lm}\mb e_r+r^l\grad Y_{lm}|^2\d S(x)\\
&=\int_{\partial B_R}(l(l-1)r^{l-2}Y_{lm}\mb e_r+(l-1)r^{l-1}\grad Y_{lm})(lr^{l-1}Y_{lm}\mb e_r+r^l\grad Y_{lm})\d S(x)\\
&=\int_{\partial B_R}(l^2(l-1)r^{2l-3}Y_{lm}^2+(l-1)r^{2l-3}(r\grad Y_{lm})^2)\d S(x)\\
&=l^2(l-1)R^{2l-1}+l(l-1)(l+1)R^{2l-1}
=l(l-1)(2l+1)R^{2l-1}\\
\|\grad\cdot(\bar\rho\bs\theta_l)\|_{L^2(B_R)}^2
&=\int_{B_R}l^2\bar\rho'^2r^{2l-2}Y_{lm}^2\d\mb x\\
&\lesssim\int_{B_R}l^2r^{2l}Y_{lm}^2\d\mb x
=\int_0^Rl^2r^{2l+2}\d r
={l^2\over 2l+3}R^{2l+3}\\
\|\grad\cdot(\bar\rho\bs\theta_l)\|_{L^2(\partial B_R)}^2
&=\int_{\partial B_R}l^2\bar\rho'^2r^{2l-2}Y_{lm}^2\d S(\mb x)
=l^2\bar\rho'(R)^2R^{2l}
\end{align*}
where the inequality is by Lemma \ref{Lane--Emden profile derivative prop}. Therefore, using \eqref{L upper bound}, we have
\begin{align*}
{\|\grad\bs\theta_l\|_{L^2(B_R)}^2\over\<\mb L\bs\theta_l,\bs\theta_l\>_{\bar\rho}}
&\geq{\|\grad\bs\theta_l\|_{L^2(B_R)}^2\over\|\grad\cdot(\bar\rho\bs\theta_l)\|_{L^2(B_R)}^2+|\bar\rho'(R)|^{-1}\|\grad\cdot(\bar\rho\bs\theta_l)\|_{L^2(\partial B_R)}^2}\\
&\gtrsim{l(l-1)(2l+1)R^{2l-1}\over{l^2\over 2l+3}R^{2l+3}+l^2|\bar\rho'(R)|R^{2l}}\to\infty\qquad\text{ as }\qquad l\to\infty.\qedhere
\end{align*}
\end{proof}

\begin{remark}
The control of $\grad\bs\theta$ fails at the boundary, not the interior. By a type of Gaffney inequality or Hodge-type bound (see e.g. Lemma 2.3 in \cite{Hadzic_Jang_Lam}), we do have $\<\mb L\bs\theta,\bs\theta\>_{\bar\rho}\gtrsim\|\dist(\bs\theta,\partial B_R)\grad\bs\theta\|_{L^2(B_R)}^2$. We expect the addition of surface tension to the system would give us control of $\|\grad\bs\theta\|_{L^2(B_R)}^2$.
\end{remark}

\section{Proof of Theorem \ref{linear operator coercivity}}\label{Proof of Theorem section}

The first important step in making the problem tractable for analysis was already done in Lemma \ref{L:LINEAREP} where the linear liquid boundary condition $(\grad\cdot\bs\theta)|_{\partial B_R}=0$ was derived and crucially used to re-write $\mb L\bs\theta$ entirely as a function of the scalar field $g=\grad\cdot(\bar\rho\bs\theta)$ rather than the full vector field $\bs\theta$. In particular, we have rewritten the liquid boundary contribution $\bs\theta\cdot\mb e_r$ as $\bar\rho'^{-1}g$ on $\partial B_R$. This boundary contribution is unique to the liquid case and is absent in the gaseous case. Rewriting the liquid boundary terms in this way came at a cost however -- in terms of $g$, the boundary terms formally look to be of higher order in derivative count than in the original variable, which makes the analysis tricky in a later stage.

Having written the problem in terms of $g$, we next use spherical harmonics to break down the problem into a sequence of scalar problems for each individual mode $g_{lm}$ (where $l\in\N_0$ and $m\in\{-l,\cdots,l\}$), by analogy to the work of Jang and Makino on gaseous Lane--Emden stars \cite{Jang_Makino_2020}. For each mode we get a bulk contribution $\Lambda_{lm}$ and a boundary contribution $\Gamma_{lm}$, the sum of which we have to show is positive. The radial $l=0$ mode translates exactly to the case of radial stability as studied in \cite{Lam_2024}. In the gaseous case of \cite{Jang_Makino_2020} only the bulk contribution $\Lambda_{lm}$ exists, which they show is positive. Even though the same method of \cite{Jang_Makino_2020} can be used to show that the $\Lambda_{lm}$ in our liquid case is positive, it is not strong enough to show that $\Lambda_{lm}+\Gamma_{lm}$ is positive, at least not for the modes $l=2,3,4,5$. The problem is twofold. Firstly, \cite{Jang_Makino_2020} approximated $\Lambda_{lm}$ from below by an elliptic operator for the case $l\geq 1$, but this approximation produces loss as the elliptic operator is less positive than $\Lambda_{lm}$ itself. Secondly, our liquid boundary terms $\Gamma_{lm}$ do not have a clear sign and, as aforementioned, they are formally of higher order than the bulk terms $\Lambda_{lm}$, which means the excess positivity of $\Lambda_{lm}$ cannot be easily transferred to compensate for $\Gamma_{lm}$.

To get around this, our innovation here is to re-write $g_{lm}$ in terms of the new variable $\chi_{lm}$ (Lemma \ref{chi_lm formulation}) under which the positivity of our problem is naturally apparent. Indeed, in this new variable, we can write $\Lambda_{lm}+\Gamma_{lm}$ \emph{exactly} as a sum of positive terms, and we do not need to rely on any lossy approximations or estimates. This new variable $\chi_{lm}$ is so suited for the problem that the liquid boundary condition $(\grad\cdot\bs\theta)|_{\partial B_R}=0$ manifests as simply the condition $\chi_{lm}'(R)=0$ and the boundary terms $\Gamma_{lm}$ can be converted back to the right order in terms of derivative count. This new variable $\chi_{lm}$ would also in fact work for the gaseous case, thus it would provide a simpler proof for the gaseous case in \cite{Jang_Makino_2020} and give an exact bound for the ``energy'' lower bound.

\begin{lemma}[Spherical harmonics decomposition]\label{Spherical harmonics decomposition}
Let $\bs\theta\in H^2(B_R)$. Then 
\begin{align}
g&:=\grad\cdot(\bar\rho\bs\theta)\in H^1(B_R)\\
\Psi(\mb x) &:= \frac1{4\pi} \K g(\mb x)\in H^2(\R^3)\cap C^1(\R^3),
\end{align}
and they 
can be expanded in spherical harmonics 
\begin{align}
g(\mb x)&=\sum_{l=0}^\infty\sum_{m=-l}^lg_{lm}(r)Y_{lm}(\mb x)\qquad\qquad\text{ on }\ B_R,\label{E:g expansion}\\
\Psi(\mb x) &= \sum_{l=0}^\infty\sum_{m=-l}^l \Psi_{lm}(r) Y_{lm}(\mb x)\qquad\qquad\text{ on }\ \R^3, \label{E:PSIDEF}
\end{align}
that converge in $L^2(B_R)$ and $L^2(\R^3)$ respectively, where the spherical harmonics $Y_{lm}$ are introduced in Appendix~\ref{A:SPHERICALHARMONICS}. 
Moreover, $\Psi_{lm}$ are related to $g_{lm}$ by
\begin{align}
\Psi_{lm}(r) &= {-1\over 2l+1}\brac{\int_0^{r}{y^{l+2}\over r^{l+1}}g_{lm}(y)\d y+\int_{r}^R{r^l\over y^{l-1}}g_{lm}(y)\d y}\label{g and Psi relation}\\
g_{lm}&=\lpc^{\<l\>}\Psi_{lm}:=\brac{{1\over r^2}\brac{r^2\Psi_{lm}'}'-{l(l+1)\over r^2}\Psi_{lm}}.\label{E:LAPLACEL}
\end{align}
With this, the following identity holds:
\begin{align}
\<\mb L\bs\theta,\bs\theta\>_{\bar\rho}
&=\sum_{l=0}^\infty\sum_{m=-l}^l(\Lambda_{lm}+\Gamma_{lm}),
\end{align}
where, for $l\ge0$ and $m\in\{-l,\dots,l\}$,
\begin{subequations}\label{E:LAMBDALMDEF}
\begin{align}
\Lambda_{lm}&: = \int_0^R\brac{\gamma\bar\rho^{\gamma-2}g_{lm}^2+4\pi g_{lm}\Psi_{lm}}r^2\d r\\
\Gamma_{lm}&:=-{R^2\over\bar\rho'(R)}\brac{\brac{\gamma+{4\pi R\over\bar\rho'(R)}{1\over 2l+1}}g_{lm}(R)^2+8\pi g_{lm}(R)\Psi_{lm}(R)}.
\end{align}
\end{subequations}
\end{lemma}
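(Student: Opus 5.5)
The plan is to compute everything from the quadratic-form identity in the Symmetry Lemma, expanding both $g$ and the kernel $1/|\mb x-\mb y|$ in spherical harmonics.

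\textbf{Regularity and expansions.} Since $\bs\theta\in H^2(B_R)$ and $\bar\rho$ is smooth on $\bar B_R$ (a regular ODE solution with $\bar\rho\ge1$), we get $g=\grad\cdot(\bar\rho\bs\theta)\in H^1(B_R)$. The function $\Psi=\frac1{4\pi}\K g$ is the Newtonian potential of the compactly supported $L^2$ function $g\mb 1_{B_R}$. So $\lpc\Psi=g\mb 1_{B_R}$ and Calder\'on--Zygmund gives $\grad^2\Psi\in L^2$. Because $g\in H^1(B_R)$ and $\grad\Psi$ is the potential of a bounded-support $L^p$ density for all $p\le6$, Sobolev embedding gives $\Psi\in C^1$. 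Away from $B_R$, $\Psi$ is harmonic with $O(|\mb x|^{-1})$ decay and $O(|\mb x|^{-2})$ gradient decay.

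I expect this global-in-$\R^3$ integrability to be the only delicate point. If the monopole $\int g$ is nonzero, $\Psi$ itself is not square integrable at infinity. I would therefore read the $H^2(\R^3)$ claim in the homogeneous or local sense: $\grad\Psi,\grad^2\Psi\in L^2(\R^3)$ and $\Psi\in H^2_{loc}$. For the expansion \eqref{E:PSIDEF} I would only use convergence in $L^2$ on each ball, which is what is needed later.

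The expansions \eqref{E:g expansion}, \eqref{E:PSIDEF} then follow from completeness of $\{Y_{lm}\}$ in $L^2(S^2)$, applied for a.e.\ $r$, together with Fubini/Parseval in polar coordinates.

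\textbf{Radial relations.} To get \eqref{g and Psi relation} I will insert the classical addition formula
\[
\frac1{|\mb x-\mb y|}=\sum_{l,m}\frac{4\pi}{2l+1}\frac{r_<^l}{r_>^{l+1}}Y_{lm}(\hat{\mb x})Y_{lm}(\hat{\mb y})
\]
into $\K g=-\int_{B_R}g(\mb y)|\mb x-\mb y|^{-1}\d\mb y$ and integrate the angular part using orthonormality. The factor $4\pi$ cancels against $\frac1{4\pi}$, and splitting at $y=r$ gives exactly \eqref{g and Psi relation}. Differentiating \eqref{g and Psi relation} twice in $r$ yields \eqref{E:LAPLACEL}; equivalently it follows from $\lpc\Psi=g$ and $\lpc(\Psi_{lm}Y_{lm})=(\lpc^{\<l\>}\Psi_{lm})Y_{lm}$, using \eqref{lpc of spherical harmonics}.

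The same addition formula applied to $\K_\partial$ gives the two facts needed below. First, for $r\le R$,
\[
(\K_\partial g)_{lm}(r)=-\frac{4\pi R^{2}}{2l+1}\frac{r^l}{R^{l+1}}g_{lm}(R).
\]
Second, at $r=R$ this equals $-\frac{4\pi R}{2l+1}g_{lm}(R)$.

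\textbf{The identity.} Start from the Symmetry Lemma with $\bs\theta_1=\bs\theta_2=\bs\theta$, use $\bar\rho(R)=1$ (so $\gamma\bar\rho^{\gamma-2}=\gamma$ on $\partial B_R$), and write $\K g=4\pi\Psi$. The bulk part gives the following three contributions.
\begin{itemize}
\item The first two terms give $\sum\Lambda_{lm}$ by Parseval in polar coordinates.
\item The term $-\bar\rho'(R)^{-1}\int_{B_R}g\,\K_\partial g$ is converted, using the symmetry of the kernel as in the Symmetry Lemma, to $-\bar\rho'(R)^{-1}\int_{\partial B_R}g\,\K g\,\d S$.
\item That boundary integral equals $-\bar\rho'(R)^{-1}4\pi R^2\sum g_{lm}(R)\Psi_{lm}(R)$.
\end{itemize}
The boundary part contributes
\[
-\frac{R^2}{\bar\rho'(R)}\sum\Big(\gamma g_{lm}(R)^2+4\pi g_{lm}(R)\Psi_{lm}(R)+\frac{4\pi R}{(2l+1)\bar\rho'(R)}g_{lm}(R)^2\Big),
\]
using the value of $(\K_\partial g)_{lm}(R)$ computed above. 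Adding the two $4\pi g\Psi$ boundary pieces gives the $8\pi$ term, which is exactly $\Gamma_{lm}$.

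The traces $g_{lm}(R)$ make sense because $g\in H^1$. Parseval on $\partial B_R$ justifies interchanging the sums, since $g|_{\partial B_R}\in L^2(\partial B_R)$.
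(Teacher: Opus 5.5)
Your proposal is correct and follows essentially the same route as the paper. Both arguments start from the symmetric quadratic form of the symmetry lemma, expand via the addition theorem for $1/|\mb x-\mb y|$, and use symmetry of the kernel to move the bulk $-\bar\rho'(R)^{-1}\int_{B_R}g\,\K_\partial g$ term onto $\partial B_R$, where it combines with the boundary $\K g=4\pi\Psi$ term into the $8\pi$ coefficient. Both then evaluate $\int_{\partial B_R}g\,\K_\partial g\,\d S$ mode by mode to get the $\frac{4\pi R}{(2l+1)\bar\rho'(R)}$ term.

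Your caveat is also correct. When $\int_{B_R}g\,\d\mb x\neq0$, the monopole part of $\Psi$ decays like $1/r$, so $\Psi\notin L^2(\R^3)$. The claims $\Psi\in H^2(\R^3)$ and $L^2(\R^3)$-convergence of the expansion of $\Psi$ should therefore be read as $\grad\Psi,\grad^2\Psi\in L^2(\R^3)$, $\Psi\in H^2_{\mathrm{loc}}\cap C^1$, with convergence on bounded sets, which is all the identity uses. Note too that, like the paper, you implicitly need $(\grad\cdot\bs\theta)|_{\partial B_R}=0$ when invoking the symmetry lemma.
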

\begin{proof}
$\bs\theta\in H^2(B_R)$ immediately  gives that $g\in H^1(B_R)$. Since $\Psi$ is a convolution of $g$ with the kernel $|\ph|^{-1}$, where $g$ is trivially extended by 0 on $\mathbb R^3\setminus B_R$, standard computation shows $\Psi\in C^1(\R^3)\cap H^2(\R^3)$. Since spherical harmonics form an $L^2$ basis (see \cite{Atkinson_Han_2012, Jackson_1962, Courant_Hilbert_1953} and Appendix~\ref{A:SPHERICALHARMONICS}), we have the spherical harmonics expansion \eqref{E:g expansion}-\eqref{E:PSIDEF} for $g$ and $\Psi$ in $L^2$.

By Lemma \ref{potential decomposition} we have
\[{1\over|\mb x-\mb y|}=4\pi\sum_{l=0}^\infty\sum_{m=-l}^l{1\over 2l+1}{\min\{|\mb x|,|\mb y|\}^l\over\max\{|\mb x|,|\mb y|\}^{l+1}}Y_{lm}(\mb y)Y_{lm}(\mb x)\]
which converge uniformly on all compact set in $\{(\mb x,\mb y):|\mb x|\not=|\mb y|\}$. So we have
\begin{align}
\mathcal{K}g(\mb x)
&=-4\pi\sum_{l=0}^\infty\sum_{m=-l}^l{1\over 2l+1}Y_{lm}(\mb x)\brac{\int_{B_{|\mb x|}(\mb 0)}{|\mb y|^l\over|\mb x|^{l+1}}gY_{lm}\d\mb y+\int_{B_{|\mb x|}(\mb 0)^c}{|\mb x|^l\over|\mb y|^{l+1}}gY_{lm}\d\mb y}\nonumber\\
&=-4\pi\sum_{l=0}^\infty\sum_{m=-l}^l{1\over 2l+1}Y_{lm}(\mb x)\brac{\int^{|\mb x|}_0{y^{l+2}\over|\mb x|^{l+1}}g_{lm}\d y+\int_{|\mb x|}^R{|\mb x|^l\over y^{l-1}}g_{lm}\d y}\label{Kg formula}.
\end{align}
We therefore conclude that 
\[
\Psi_{lm}(r) = {-1\over 2l+1}\brac{\int_0^{r}{y^{l+2}\over r^{l+1}}g_{lm}(y)\d y+\int_{r}^R{r^l\over y^{l-1}}g_{lm}(y)\d y}
\]
since spherical harmonics expansion is unique (using standard Hilbert space theory and the fact that spherical harmonics form an $L^2$ basis for $L^2$ functions on the sphere). Inverting this expression, we get \eqref{E:LAPLACEL}.

Now using the spherical harmonics expansion for $g$ and $\Psi$, we get
\begin{align*}
\int_{\partial B_R}g\K_{\partial}g\;\d S
&=-\int_{\partial B_R}\int_{\partial B_R}{g(\mb x)g(\mb y)\over|\mb x-\mb y|}\d S(\mb x)\d S(\mb y)\\
&=-{4\pi\over R}\sum_{l=0}^\infty\sum_{m=-l}^l{1\over 2l+1}\int_{\partial B_R}\int_{\partial B_R}g(\mb x)g(\mb y)Y_{lm}(\mb x)Y_{lm}(\mb y)\d S(\mb x)\d S(\mb y)\\
&=-4\pi R^3\sum_{l=0}^\infty\sum_{m=-l}^l{g_{lm}(R)^2\over 2l+1}
\end{align*}
and
\begin{align*}
\<\mb L\bs\theta,\bs\theta\>_{\bar\rho}
&=\int_{B_R}g\brac{\gamma\bar\rho^{\gamma-2}+\K-\bar\rho'(R)^{-1}\K_{\partial}}g\;\d\mb x\\
&\quad-\int_{\partial B_R}\bar\rho'(R)^{-1}g\brac{\gamma+\K-\bar\rho'(R)^{-1}\K_{\partial}}g\;\d S\\
&=\int_{B_R}g\brac{\gamma\bar\rho^{\gamma-2}g+4\pi\Psi}\;\d\mb x
-\int_{\partial B_R}\bar\rho'(R)^{-1}g\brac{\gamma g+8\pi\Psi-\bar\rho'(R)^{-1}\K_{\partial}g}\;\d S\\
&=\int_{B_R}\brac{\gamma\bar\rho^{\gamma-2}|g|^2+4\pi g\Psi}\;\d\mb x
-{1\over\bar\rho'(R)}\int_{\partial B_R}\brac{\gamma|g|^2+8\pi g\Psi-\bar\rho'(R)^{-1}g\K_{\partial}g}\;\d S\\
&=\sum_{l=0}^\infty\sum_{m=-l}^l\int_0^r\brac{\gamma\bar\rho^{\gamma-2}g_{lm}^2+4\pi g_{lm}\Psi_{lm}}r^2\d r\\
&\quad -\sum_{l=0}^\infty\sum_{m=-l}^l{R^2\over\bar\rho'(R)}\brac{\gamma g_{lm}(R)^2+8\pi g_{lm}(R)\Psi_{lm}(R)+{4\pi R\over\bar\rho'(R)}{g_{lm}(R)^2\over 2l+1}}\\
&=\sum_{l=0}^\infty\sum_{m=-l}^l(\Lambda_{lm}+\Gamma_{lm}).\qedhere
\end{align*} 
\end{proof}

\begin{lemma}\label{Psi boundary condition}
Take the setting of Lemma \ref{Spherical harmonics decomposition}. We have
\begin{align*}
\Psi_{lm}'(r)&=-{l+1\over r}\Psi_{lm}(r)\qquad\text{ for }\qquad r\geq R.
\end{align*}
\begin{align*}
\int_R^\infty\Psi_{lm}'(r)^2r^2\d r
=(l+1)^2\int_R^\infty\Psi_{lm}(r)^2\d r
={(l+1)^2R\over 2l+1}\Psi_{lm}(R)^2.
\end{align*}
\end{lemma}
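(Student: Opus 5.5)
The plan is to read off an explicit formula for $\Psi_{lm}$ outside the ball from the expansion \eqref{Kg formula}, and then integrate directly.

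First I will compute $\Psi_{lm}$ for $r\ge R$. The relation \eqref{g and Psi relation} was written for $r\le R$. For $|\mb x|=r\geq R$ I return to \eqref{Kg formula}, using that $g$ is extended by zero outside $B_R$. The inner integral then runs over $y\in[0,R]$, and the outer integral $\int_r^R$ is replaced by an integral over $y\ge r\ge R$, where $g_{lm}=0$, so it vanishes. This gives
\begin{align*}
\Psi_{lm}(r)=-{1\over 2l+1}\,r^{-(l+1)}\int_0^R y^{l+2}g_{lm}(y)\,\d y=:c_{lm}\,r^{-(l+1)}\qquad (r\ge R),
\end{align*}
with $c_{lm}$ a constant. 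Equivalently, $\Psi_{lm}$ solves $\lpc^{\<l\>}\Psi_{lm}=0$ on $(R,\infty)$ and decays at infinity, which excludes the $r^l$ branch. Differentiating gives $\Psi_{lm}'(r)=-(l+1)c_{lm}r^{-(l+2)}=-{l+1\over r}\Psi_{lm}(r)$, which is the first claim. Since $\Psi\in C^1(\R^3)$ by Lemma \ref{Spherical harmonics decomposition}, $\Psi_{lm}$ is continuous at $r=R$, so $c_{lm}=R^{l+1}\Psi_{lm}(R)$.

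For the integral identity, squaring the derivative formula gives $\Psi_{lm}'(r)^2r^2=(l+1)^2\Psi_{lm}(r)^2$, which yields the first equality after integrating over $(R,\infty)$. For the second equality I substitute $\Psi_{lm}(r)=\Psi_{lm}(R)(R/r)^{l+1}$:
\begin{align*}
\int_R^\infty\Psi_{lm}(r)^2\,\d r=\Psi_{lm}(R)^2R^{2l+2}\int_R^\infty r^{-2l-2}\,\d r={R\over 2l+1}\Psi_{lm}(R)^2.
\end{align*}
This integral converges because $2l+2\ge2$ for every $l\ge0$. Multiplying by $(l+1)^2$ gives the stated value.

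There is no real obstacle here. The only point needing care is justifying that the formula \eqref{Kg formula} may be evaluated for $|\mb x|\ge R$ and matched with $\Psi_{lm}(R)$. This follows from the uniform convergence of the multipole expansion on compact sets away from $|\mb x|=|\mb y|$, which holds for $|\mb x|>R$ since $g$ is supported in $B_R$, together with the $C^1$ continuity of $\Psi$ across $\partial B_R$.
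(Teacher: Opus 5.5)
Your proposal is correct and is essentially the paper's argument. You restrict the integral representation of $\Psi_{lm}$ to $r\ge R$, where $g_{lm}=0$ leaves only the $\int_0^R y^{l+2}g_{lm}\,\d y$ term, so $\Psi_{lm}=\Psi_{lm}(R)(R/r)^{l+1}$ and both identities follow by direct integration. The only cosmetic difference is that the paper reads off $\Psi_{lm}'$ from the differentiated formula \eqref{derivative of Psi_lm} instead of differentiating the explicit power law.
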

\begin{proof}
Differentiating \eqref{g and Psi relation} we have
\begin{align}\label{derivative of Psi_lm}
\Psi_{lm}'(r) &= {-1\over 2l+1}\brac{rg_{lm}(r)-rg_{lm}(r)-(l+1)\int_0^{r}{y^{l+2}\over r^{l+2}}g_{lm}(y)\d y+l\int_{r}^R{r^{l-1}\over y^{l-1}}g_{lm}(y)\d y}
\end{align}
and so
\begin{align*}
\Psi_{lm}'(r)&={l+1\over 2l+1}\int_0^{R}{y^{l+2}\over r^{l+2}}g_{lm}(y)\d y\qquad\text{ for }\qquad r\geq R.
\end{align*}
Since
\begin{align*}
\Psi_{lm}(r)&={-1\over 2l+1}\int_0^R{y^{l+2}\over r^{l+1}}g_{lm}(y)\d y\qquad\text{ for }\qquad r\geq R,
\end{align*}
we have
\begin{align*}
\Psi_{lm}'(r)&=-{l+1\over r}\Psi_{lm}(r)\qquad\text{ for }\qquad r\geq R
\end{align*}
\begin{align*}
\Psi_{lm}(r)&={R^{l+1}\over r^{l+1}}\Psi_{lm}(R)\qquad\text{ for }\qquad r\geq R.
\end{align*}
So we have
\begin{align*}
\int_R^\infty\Psi_{lm}(r)^2\d r=\int_R^\infty{R^{2l+2}\over r^{2l+2}}\Psi_{lm}(R)^2\d r
=-{R^{2l+2}\over 2l+1}\Psi_{lm}(R)^2\sbrac{1\over r^{2l+1}}_R^\infty
={R\over 2l+1}\Psi_{lm}(R)^2.\qquad\qedhere
\end{align*}
\end{proof}

From~\cite{Lam_2024} we have the following Theorem.

\begin{theorem}\label{radial positivity}
Let $\bar\rho$ be a liquid LE profile with adiabatic index $\gamma$ and radius $R$, and
\begin{align*}
\mathcal{L}\chi&:=-\gamma\partial_r\brac{\bar\rho^\gamma r^4\partial_r\chi}+(4-3\gamma)r^3\chi\partial_r\bar\rho^\gamma.
\end{align*}
Let $E:=\{\chi:\|\chi\|_E<\infty,3\chi(R)+R\chi'(R)=0\}$, where
\begin{align*}
\|\chi\|_E&:=\|r^2\chi'\|_{L^2([0,R])}^2+\|r^2\chi\|_{L^2([0,R])}^2+(R^3+R^4)|\chi(R)|^2.
\end{align*}
Then
\begin{enumerate}[i.]
\item When either $\gamma\geq 4/3$ or $\bar\rho(0)$ is close to 1, we have
\begin{align*}
\inf_{\|\chi\|_E>0}{\<\mathcal{L}\chi,\chi\>_{L^2([0,R])}\over\|\chi\|_E^2}>0.
\end{align*}
\item When $\gamma<4/3$ and $\bar\rho(0)$ is large, we have
\begin{align*}
\inf_{\|\chi\|_E>0}{\<\mathcal{L}\chi,\chi\>_{L^2([0,R])}\over\|\chi\|_E^2}<0.
\end{align*}
\end{enumerate}
\end{theorem}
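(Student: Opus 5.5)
The statement is quoted from \cite{Lam_2024}. I would reprove it through the quadratic form of $\mathcal L$.

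\textbf{The quadratic form.} Integrating $\<\mathcal L\chi,\chi\>_{L^2([0,R])}$ by parts once gives
\begin{align*}
\<\mathcal L\chi,\chi\>=\gamma\int_0^R\bar\rho^\gamma r^4\chi'^2\,\d r-\gamma\bar\rho(R)^\gamma R^4\chi'(R)\chi(R)+(4-3\gamma)\int_0^R r^3\chi^2(\bar\rho^\gamma)'\,\d r.
\end{align*}
Since $\bar\rho(R)=1$ and $\chi'(R)=-3\chi(R)/R$ on $E$, the boundary term equals $+3\gamma R^3\chi(R)^2$. Call the resulting expression $Q(\chi)$.

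I observe that $3\chi(R)+R\chi'(R)=0$ is exactly the natural boundary condition of $Q$. Hence, by density, the infimum of $Q/\|\chi\|_E^2$ over $E$ equals its infimum over all $\chi$ with $\|\chi\|_E<\infty$, with no boundary constraint. I will use this freedom in part ii.

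\textbf{Part i.} The first two terms of $Q$ are positive. The key auxiliary inequality is a Hardy-type bound. Writing $\chi(r)=\chi(R)-\int_r^R\chi'$ and applying Cauchy--Schwarz with the weight $r^4$, I expect
\begin{align*}
\int_0^R r^4\chi^2\,\d r\lesssim R^2\Big(\int_0^R r^4\chi'^2\,\d r+R^3\chi(R)^2\Big).
\end{align*}
Combined with $\bar\rho^\gamma\ge1$, this shows that the first two terms already control $\|\chi\|_E^2$, with constants depending on $R$.

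If $\gamma\ge4/3$, then $(4-3\gamma)(\bar\rho^\gamma)'\ge0$ because $\bar\rho$ is decreasing. The third term is therefore nonnegative and coercivity follows.

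If $\gamma<4/3$, Lemma \ref{Lane--Emden profile derivative prop} bounds the negative term by
\begin{align*}
(3\gamma-4)\tfrac{4\pi}{3}\bar\rho(0)^2\int_0^R r^4\chi^2\,\d r.
\end{align*}
By the Hardy bound above, this is $O(R^2)$ times the positive part. It remains to check that $R\to0$ as $\bar\rho(0)\to1$. Linearising the Lane--Emden ODE near the constant state gives $\bar\rho(0)-\bar\rho(r)\approx\frac{2\pi}{3\gamma}r^2$, so $R\sim(\bar\rho(0)-1)^{1/2}\to0$. The negative term is then absorbed, which proves i.

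\textbf{Part ii.} Using the unconstrained formulation, I would test with $\chi\equiv1$. Using $\int_0^Rr^3(\bar\rho^\gamma)'\,\d r=R^3-3\int_0^Rr^2\bar\rho^\gamma\,\d r$, this gives
\begin{align*}
Q(1)=4R^3-3(4-3\gamma)\int_0^R r^2\bar\rho^\gamma\,\d r.
\end{align*}
So $Q(1)<0$ if and only if the volume average of $\bar\rho^\gamma$ over $B_R$ exceeds $\frac{4}{3(4-3\gamma)}$. A rigorous version would use $\chi\equiv1$ modified in a thin layer near $R$ to satisfy the boundary condition, with the error tending to zero.

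The main obstacle is showing that this average blows up as $\bar\rho(0)\to\infty$. I would first try the scaling $\bar\rho(r)=\bar\rho(0)\,w(\bar\rho(0)^{(2-\gamma)/2}r)$. Under this scaling the liquid profile converges, on compact sets, to the gaseous Lane--Emden profile normalised to $w(0)=1$.
\begin{itemize}
\item For $6/5<\gamma<4/3$, that gaseous profile has finite radius. The cut-off at $\bar\rho=1$ then occurs near that radius in rescaled variables, and the average of $\bar\rho^\gamma$ is comparable to $\bar\rho(0)^\gamma\to\infty$.
\item For $\gamma\le6/5$, the gaseous profile decays only polynomially, so the comparison requires explicit decay rates. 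I expect this subcase to need the most care.
\end{itemize}
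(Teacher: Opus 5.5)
The paper does not prove this theorem; it imports it from \cite{Lam_2024}, so your argument has to stand on its own. Part i does. The quadratic form is right. So is the observation that $3\chi(R)+R\chi'(R)=0$ is the natural boundary condition, which lets you drop the constraint by a thin-layer modification near $R$, since the $E$-norm does not see $\chi'(R)$. The weighted Hardy bound and the small-$R$ absorption are also correct. The step $R\to0$ follows rigorously from Lemma~\ref{Lane--Emden profile derivative prop}: integrating $-(\bar\rho^\gamma)'>\frac{4\pi}{3}r$ gives $\frac{2\pi}{3}R^2<\bar\rho(0)^\gamma-1$.

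The gap is in part ii. The dilation $\chi\equiv1$ fails for $\gamma$ near $1$, and the obstacle you name (the average of $\bar\rho^\gamma$ blowing up) is false for $1\le\gamma<6/5$. Note first that the correct criterion is $\frac{3}{R^3}\int_0^R r^2\bar\rho^\gamma\,\d r>\frac{4}{4-3\gamma}$; your threshold is off by the factor $3$ in the volume average.

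For $\gamma<6/5$ the regular Lane--Emden solution satisfies $s^\alpha w(s)\to c$ with $\alpha=\frac{2}{2-\gamma}$. In your scaling $\mu^\alpha=\bar\rho(0)$, so $\bar\rho(r)=\mu^\alpha w(\mu r)\to c\,r^{-\alpha}$ and $R\to c^{1/\alpha}$. The liquid star therefore converges to a truncated singular profile that does not depend on $\bar\rho(0)$. Consequently the volume average of $\bar\rho^\gamma$ tends to the finite value $\frac{3(2-\gamma)}{6-5\gamma}$, and $Q(1)/R^3\to\frac{\gamma(10-9\gamma)}{6-5\gamma}$. This limit is positive for $1\le\gamma<10/9$ and zero at $\gamma=10/9$. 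For example, at $\gamma=1$ one has $\bar\rho\to\frac{1}{2\pi r^2}$, $R^2\to\frac{1}{2\pi}$ and $Q(1)\to R^3>0$. No refinement of decay estimates can rescue $\chi\equiv1$ there.

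You need a test function adapted to the self-similar core. One that works: set $D:=3-\alpha\gamma=\frac{6-5\gamma}{2-\gamma}$ and take $\chi=r^{-\beta}$ with $0<2\beta<D$. On the limiting profile
\[
Q(r^{-\beta})=\gamma R^{3-2\beta}\Big(\frac{\beta^2-(4-3\gamma)\alpha}{D-2\beta}+3\Big).
\]
This is negative for $\beta$ slightly below $D/2$, because $D^2<4(4-3\gamma)\alpha$ (equivalently $\gamma^2+20\gamma-28<0$) on $[1,6/5)$. In words, the sharp Hardy constant for the weight $r^{4-\alpha\gamma}$ loses to the negative potential. The uniform bounds $\bar\rho\le Cr^{-\alpha}$ and $|(\bar\rho^\gamma)'|\le Cr^{-\alpha\gamma-1}$ (from $-(\bar\rho^\gamma)'=\frac{4\pi\bar\rho}{r^2}\int_0^r y^2\bar\rho$), together with dominated convergence, then give $Q(r^{-\beta})<0$ for all large $\bar\rho(0)$.

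Your dilation argument is valid for $10/9<\gamma<4/3$. For $10/9<\gamma<6/5$, however, it works only because the finite limiting average exceeds $\frac{4}{4-3\gamma}$, not because it diverges.
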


We shall use Theorem~\ref{radial positivity} to obtain coercivity for the quadratic form $\Lambda_{00}$.

\begin{proposition}[$l=0$ mode bound]\label{l=0 mode bound}
Suppose $\bs\theta\in H^2(B_R)$ and $(\grad\cdot\bs\theta)|_{\partial B_R}=0$. When case i. of Lemma \ref{radial positivity} holds we have
\begin{align}\label{E:ZEROZEROBOUND}
\Lambda_{00}+\Gamma_{00} \gtrsim  \int_0^R g_{00}^2r^2\d r+{R^2\over\bar\rho'(R)^2}g_{00}(R)^2,
\end{align}
where $\Lambda_{00},\Gamma_{00},g_{00}$ are as defined in Lemma \ref{Spherical harmonics decomposition}.
\end{proposition}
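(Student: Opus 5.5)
The plan is to reduce the $l=0$ quadratic form to the radial operator $\mathcal L$ of Theorem~\ref{radial positivity}, by rewriting $g_{00}$ in terms of a radial displacement variable $\chi$. I would then convert the $E$-coercivity into the stated bound.

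\textbf{Step 1: the variable $\chi$.} Let $\chi(r)$ be $r^{-1}$ times the spherical average of $\bs\theta\cdot\mb e_r$ over $\partial B_r$, up to the normalisation constant of $Y_{00}$. The divergence theorem on $B_r$ gives $\int_{B_r}g\,\d\mb x=\int_{\partial B_r}\bar\rho\,\bs\theta\cdot\d\mb S$. Projecting onto $Y_{00}$, this becomes (up to a fixed constant)
\[
g_{00}=r^{-2}\bigl(r^3\bar\rho\chi\bigr)',\qquad \int_0^r y^2g_{00}(y)\,\d y=r^3\bar\rho\chi .
\]
From \eqref{derivative of Psi_lm} with $l=0$ I then get $\Psi_{00}'=r^{-2}\int_0^r y^2g_{00}\,\d y=r\bar\rho\chi$.

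The spherical average of $\grad\cdot\bs\theta$ over $\partial B_r$ equals $r^{-2}(r^3\chi)'$ up to the same constant. Hence the liquid condition $(\grad\cdot\bs\theta)|_{\partial B_R}=0$ gives exactly $3\chi(R)+R\chi'(R)=0$, so $\chi\in E$. Since $\bar\rho(R)=1$, it also gives $g_{00}(R)=\bar\rho'(R)R\chi(R)$, which is just the $l=0$ projection of $g|_{\partial B_R}=\bar\rho'(R)\bs\theta\cdot\mb e_r$.

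\textbf{Step 2: the exact identity.} I would show that $\Lambda_{00}+\Gamma_{00}=c\,\<\mathcal L\chi,\chi\>_{L^2([0,R])}$ for a positive constant $c$. For the gravitational part, write $\int_0^R g_{00}\Psi_{00}r^2\,\d r=\int_0^R(r^2\Psi_{00}')'\Psi_{00}\,\d r$ and integrate by parts. Lemma~\ref{Psi boundary condition} evaluates the exterior contribution, giving $-\int_0^R r^4\bar\rho^2\chi^2\,\d r$ plus a boundary term $\propto R\Psi_{00}(R)^2$.

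For the pressure part, expand $\gamma\bar\rho^{\gamma-2}r^{-2}\bigl((r^3\bar\rho\chi)'\bigr)^2$. The cross terms involving $\bar\rho'$ are integrated by parts, and the Lane--Emden relation $-(\bar\rho^\gamma)'=4\pi\bar\rho r^{-2}\int_0^r y^2\bar\rho\,\d y$ (as in Lemma~\ref{Lane--Emden profile derivative prop}) is used to absorb the $-\int r^4\bar\rho^2\chi^2$ term. This should produce $\gamma\int\bar\rho^\gamma r^4\chi'^2+(4-3\gamma)\int r^3\chi^2(-(\bar\rho^\gamma)')$, which is the quadratic form of $\mathcal L$.

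\textbf{Step 3: cancellation of the boundary terms (main obstacle).} The boundary contributions from the integrations by parts, together with $\Gamma_{00}$, must cancel exactly. Here $\Gamma_{00}$ is expressed through $g_{00}(R)=\bar\rho'(R)R\chi(R)$ and $\Psi_{00}(R)=-R^{-1}\int_0^Ry^2g_{00}\,\d y=-R^2\chi(R)$. The cancellation should use $3\chi(R)+R\chi'(R)=0$ together with $\bar\rho(R)=1$ and the value $-(\bar\rho^\gamma)'(R)$ supplied by the Lane--Emden relation, which is exactly what makes $\mathcal L$ self-adjoint on $E$ without boundary terms. Tracking the constants here is where I expect the real work to be. The case analysis itself is routine: this identity is essentially the radial computation of \cite{Lam_2024}, so I would follow that structure.

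\textbf{Step 4: from $\|\chi\|_E$ to the right-hand side.} Theorem~\ref{radial positivity} in case i. gives $\Lambda_{00}+\Gamma_{00}\gtrsim\|\chi\|_E^2$. For the boundary term, $R^2\bar\rho'(R)^{-2}g_{00}(R)^2=R^4\chi(R)^2\le\|\chi\|_E^2$. For the bulk term, since $\bar\rho$ and $r^{-1}\bar\rho'$ are bounded by Lemma~\ref{Lane--Emden profile derivative prop},
\[
\int_0^Rg_{00}^2r^2\,\d r\lesssim\int_0^R\bigl(r^2\chi^2+r^4\chi'^2\bigr)\,\d r .
\]
The weight $r^2$ in $\int r^2\chi^2$ is not directly in $E$, so I would use the Hardy-type estimate
\[
3\int_0^R r^2\chi^2\,\d r=R^3\chi(R)^2-2\int_0^R r^3\chi\chi'\,\d r ,
\]
followed by Cauchy--Schwarz and absorption. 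This gives $\int_0^R r^2\chi^2\,\d r\lesssim R^3\chi(R)^2+\int_0^R r^4\chi'^2\,\d r\lesssim\|\chi\|_E^2$, which completes \eqref{E:ZEROZEROBOUND}.
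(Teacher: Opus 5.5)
Your overall plan matches the paper's. Your $\chi=\Theta^{[0]}_{00}/r$ is exactly the paper's $\chi=\Psi_{00}'/(r\bar\rho)$, since, as you note, $\Psi_{00}'=r\bar\rho\chi$. The paper proves the identity with $c=1$ and then invokes Theorem~\ref{radial positivity}. Your explicit check that $3\chi(R)+R\chi'(R)=0$, i.e.\ $\chi\in E$, is something the paper uses without stating.

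However, the form you anticipate in Steps~2--3 is wrong in two places, so the computation would not close as you describe it.

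First, the sign is wrong. The lower-order term is $+(4-3\gamma)\int_0^R r^3(\bar\rho^\gamma)'\chi^2\,\d r$, not $(4-3\gamma)\int_0^R r^3\chi^2(-(\bar\rho^\gamma)')\,\d r$. With your sign the form would be nonnegative for $\gamma<4/3$, contradicting case ii.

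Second, the boundary terms do not all cancel.
\begin{itemize}
\item The $R\Psi_{00}(R)^2$ terms from the gravitational integration by parts and from $\Gamma_{00}$ cancel each other. This leaves $\Lambda_{00}+\Gamma_{00}=\int_0^R(\gamma\bar\rho^{\gamma-2}g_{00}^2r^2-4\pi r^4\bar\rho^2\chi^2)\,\d r-\gamma R^4\bar\rho'(R)\chi(R)^2$.
\item Writing $4\pi r^2\bar\rho=-\gamma(r^2\bar\rho^{\gamma-2}\bar\rho')'$ and integrating by parts against $r^2\bar\rho\chi^2$ produces $+\gamma R^4\bar\rho'(R)\chi(R)^2$ (using $\bar\rho(R)=1$), which cancels the previous boundary term.
\item The cross term $6\gamma r^3\bar\rho^\gamma\chi\chi'$, integrated by parts to remove $9\gamma r^2\bar\rho^\gamma\chi^2$, leaves $3\gamma R^3\chi(R)^2=-\gamma R^4\chi(R)\chi'(R)$. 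This is precisely the boundary term of $\int_0^R\mathcal L\chi\,\chi\,\d r$ on $E$. So $\mathcal L$ is not boundary-term-free on $E$, and the correct identity is
\[
\Lambda_{00}+\Gamma_{00}=\int_0^R\bigl(\gamma r^4\bar\rho^\gamma\chi'^2+(4-3\gamma)r^3(\bar\rho^\gamma)'\chi^2\bigr)\,\d r+3\gamma R^3\chi(R)^2=\<\mathcal L\chi,\chi\>_{L^2([0,R])}.
\]
\end{itemize}

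Your Step~4 is correct but takes a different route from the paper. You bound $g_{00}=3\bar\rho\chi+r\bar\rho'\chi+r\bar\rho\chi'$ pointwise using Lemma~\ref{Lane--Emden profile derivative prop}. You then fix the weight mismatch ($r^2\chi^2$ versus $r^4\chi^2$ in $E$) with the Hardy identity; the endpoint term vanishes because $r^3\chi^2=r(\Theta^{[0]}_{00})^2$ and $\bs\theta\in H^2$ is bounded.

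The paper instead keeps the intermediate form $\int_0^R(\gamma\bar\rho^{\gamma-2}g_{00}^2r^2-4\pi r^4\bar\rho^2\chi^2)\,\d r-\gamma R^4\bar\rho'(R)\chi(R)^2$. It adds $\epsilon$ times this to the $E$-coercivity bound and absorbs the single negative term into $\int_0^R r^4\chi^2\,\d r\le\|\chi\|_E^2$. This gives $\int_0^R\bar\rho^{\gamma-2}g_{00}^2r^2\,\d r$ directly, with no Hardy inequality and no bound on $\bar\rho'/r$.

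Your route is more hands-on and makes the dependence on the profile explicit. The paper's is shorter because $g_{00}$ already appears with a positive coefficient in that form.
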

\begin{proof}
$\bs\theta\in H^2(B_R)$ means that $\bar\rho\bs\theta$ is well defined on $\partial B_R$ (trace theorem). Since $(\grad\cdot\bs\theta)|_{\partial B_R}=0$ and $\bar\rho\bs\theta=\grad\Psi+\mb C$ where $\mb C$ is divergence-free, we have
\[\int_{\partial B_R}\partial_r\Psi\;\d S=\int_{\partial B_R}\grad\Psi\cdot\d\mb S=\int_{\partial B_R}\bar\rho\bs\theta\cdot\d\mb S={1\over\bar\rho'(R)}\int_{\partial B_R}g\;\d S.\]
It follows that $\Psi_{00}'(R)=\bar\rho'(R)^{-1}g_{00}(R)$. Now taking the derivative of \eqref{g and Psi relation} and using $g_{00}(r)=0$ for $r>R$, we see that in fact we must have
\begin{align}
\Psi_{00}'(r)={R\over r\bar\rho'(R)}g_{00}(R)\qquad\text{for}\qquad r\geq R.\label{00 boundary condition second}
\end{align}

Denote
\begin{align}\label{E:VARPHIDEF}
\chi : = {\Psi_{00}'\over r\bar\rho},
\end{align}

Lemma \ref{Psi boundary condition} gives us
\begin{align*}
\Psi_{00}(R)=-R\Psi_{00}'(R).
\end{align*}
Using \eqref{E:LAPLACEL} and \eqref{00 boundary condition second}, we get 
\begin{align*}
\Gamma_{00}&=-{R^2\over\bar\rho'(R)}\brac{\brac{\gamma+{4\pi R\over\bar\rho'(R)}}g_{00}(R)^2+8\pi g_{00}(R)\Psi_{00}(R)}\\
&=-{R^2\over\bar\rho'(R)}\brac{\gamma g_{00}(R)^2+4\pi g_{00}(R)\Psi_{00}(R)}
=-\gamma R^2\bar\rho'(R)\Psi_{00}'(R)^2-4\pi R^2\Psi_{00}'(R)\Psi_{00}(R)
\end{align*}
\begin{align*}
\Lambda_{00}&=\int_0^R\brac{\gamma\bar\rho^{\gamma-2}g_{00}^2+4\pi g_{00}\Psi_{00}}r^2\d r
=\int_0^R\brac{{\gamma\over r^2}\bar\rho^{\gamma-2}\brac{\brac{r^2\Psi_{00}'}'}^2+4\pi\brac{r^2\Psi_{00}'}'\Psi_{00}}\d r\\
&=\int_0^R\brac{{\gamma\over r^2}\bar\rho^{\gamma-2}\brac{\brac{r^2\Psi_{00}'}'}^2-4\pi r^2(\Psi_{00}')^2}\d r+4\pi R^2\Psi_{00}'(R)\Psi_{00}(R)\\
&=\int_0^R\brac{{\gamma\over r^2}\bar\rho^{\gamma-2}\brac{\brac{r^3\bar\rho\chi}'}^2-4\pi\chi^2\bar\rho^2r^4}\d r-\Gamma_{00}-\gamma R^4\bar\rho'(R)\chi(R)^2.
\end{align*}
Now since $0={\gamma\over\gamma-1}\lpc\bar\rho^{\gamma-1}+4\pi\bar\rho$ as in \eqref{E:equation for bar-w}, we see that 
\begin{align}
&\Lambda_{00}+\Gamma_{00}\nonumber\\
&=\int_0^R\brac{{\gamma\over r^2}\bar\rho^{\gamma-2}\brac{\brac{r^3\bar\rho\chi}'}^2+{\gamma\over\gamma-1}r^4\chi^2\bar\rho\lpc\bar\rho^{\gamma-1}}\d r-\gamma R^4\bar\rho'(R)\chi(R)^2\nonumber\\
&=\int_0^R\bigg({\gamma\over r^2}\bar\rho^{\gamma-2}\brac{3r^2\bar\rho\chi+r^3\bar\rho'\chi+r^3\bar\rho\chi'}^2
+{\gamma\over\gamma-1}(r^2(\bar\rho^{\gamma-1})')'\chi^2\bar\rho r^2\bigg)\d r-\gamma R^4\bar\rho'(R)\chi(R)^2\nonumber\\
&=\int_0^R\bigg(\gamma\brac{3r\bar\rho^{\gamma/2}\chi+r^2\bar\rho^{\gamma/2-1}\bar\rho'\chi+r^2\bar\rho^{\gamma/2}\chi'}^2
-{\gamma\over\gamma-1}r^2(\bar\rho^{\gamma-1})'(\chi^2\bar\rho r^2)'\bigg)\d r\nonumber\\
&\quad\underbrace{+{\gamma\over\gamma-1}R^4(\bar\rho^{\gamma-1})'(R)\chi(R)^2-\gamma R^4\bar\rho'(R)\chi(R)^2}_{=0}\nonumber\\
&=\int_0^R\bigg(\gamma\big(9r^2\bar\rho^\gamma\chi^2+r^4\bar\rho^{\gamma-2}(\bar\rho')^2\chi^2+r^4\bar\rho^\gamma(\chi')^2
+6r^3\bar\rho^{\gamma-1}\bar\rho'\chi^2+2r^4\bar\rho^{\gamma-1}\bar\rho'\chi\chi'+6r^3\bar\rho^{\gamma}\chi\chi'\big)\nonumber\\
&\qquad\qquad-\gamma r^2\bar\rho^{\gamma-2}\bar\rho'(2\chi\chi'\bar\rho r^2+\chi^2\bar\rho'r^2+2r\chi^2\bar\rho)\bigg)\d r\nonumber\\
&=\int_0^R\gamma\brac{9r^2\bar\rho^\gamma\chi^2+r^4\bar\rho^\gamma(\chi')^2+4\gamma^{-1}r^3(\bar\rho^\gamma)'\chi^2+6r^3\bar\rho^\gamma\chi\chi'}
\d r\nonumber\\
&=\int_0^R\gamma\brac{r^4\bar\rho^\gamma(\chi')^2+(4\gamma^{-1}-3)r^3(\bar\rho^\gamma)'\chi^2}
\d r+3\gamma R^3\chi(R)^2\nonumber\\
&=\int_0^R\brac{\gamma r^4\bar\rho^\gamma(\chi')^2+(4-3\gamma)r^3(\bar\rho^\gamma)'\chi^2}
\d r+3\gamma R^3\chi(R)^2
=\<\mathcal{L}\chi,\chi\>_{L^2([0,R])}.\label{00 mode in chi form}
\end{align}
By case i of Lemma \ref{radial positivity} we then get
\begin{align*}
\Lambda_{00}+\Gamma_{00}&\gtrsim\|\chi\|_{E}^2.
\end{align*}
Then for $\epsilon$ small enough, we get
\begin{align*}
(1+\epsilon)(\Lambda_{00}+\Gamma_{00})&\geq\int_0^R\epsilon\brac{\gamma\bar\rho^{\gamma-2}g_{00}^2-4\pi r^2(\Psi_{00}')^2}\d r-\epsilon\gamma R^2\bar\rho'(R)\Psi_{00}'(R)^2+C\|\chi\|_{E}^2.
\end{align*}
Finally, using \eqref{E:VARPHIDEF} and choosing $\epsilon$ small enough we get
\begin{align*}
\Lambda_{00}+\Gamma_{00} \gtrsim  \int_0^R\brac{\bar\rho^{\gamma-2}g_{00}^2+(\Psi_{00}')^2}r^2\d r+R^2\Psi_{00}'(R)^2.
\end{align*}
By \eqref{00 boundary condition second} we can convert the boundary term into $g_{00}(R)$ to get~\eqref{E:ZEROZEROBOUND}.
\end{proof}

To prove the positivity of the higher modes, we will reformulate our functional into variables under which positivity is apparent.

\begin{lemma}[$\chi_{lm}$ formulation]\label{chi_lm formulation}
Take the setting of Lemma \ref{Spherical harmonics decomposition}. Suppose $\bs\theta\in H^2(B_R)$ and $(\grad\cdot\bs\theta)|_{\partial B_R}=0$, then we can write $g_{lm}=(\bar\rho\chi_{lm})'r^{l-1}$, where $\chi_{lm}$ is such that $\chi_{lm}'(R)=0$. And we have that
\begin{align}
  \Lambda_{lm}+\Gamma_{lm} 
  &=\int_0^R\brac{\gamma \bar\rho^{\gamma-2}(\bar\rho\chi_{lm})'^2-4\pi(\bar\rho\chi_{lm})^2}r^{2l}\d r-\gamma\bar\rho'(R)\chi_{lm}(R)^2R^{2l}\label{chi_lm energy one}\\
  &=\int_0^R\brac{\gamma\bar\rho^\gamma\chi_{lm}'^2r^{2l} -2(l-1)(\bar\rho^{\gamma})'\chi_{lm}^2 r^{2l-1}}\d r.\label{chi_lm energy two}
\end{align}
\end{lemma}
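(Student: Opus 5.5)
The plan is to construct $\chi_{lm}$ explicitly from $g_{lm}$, in analogy with the substitution $\chi=\Psi_{00}'/(r\bar\rho)$ used in the proof of Proposition~\ref{l=0 mode bound}. I would define
\[
\bar\rho\chi_{lm}(r):=\frac{R^{1-l}g_{lm}(R)}{\bar\rho'(R)}-\int_r^R g_{lm}(y)\,y^{1-l}\,\d y .
\]
Then $g_{lm}=(\bar\rho\chi_{lm})'r^{l-1}$ by construction. Since $\bar\rho(R)=1$, evaluating $(\bar\rho\chi_{lm})'(R)=\bar\rho'(R)\chi_{lm}(R)+\chi_{lm}'(R)$ and comparing with $g_{lm}(R)R^{1-l}$ gives $\chi_{lm}'(R)=0$.

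The link to the hypothesis $(\grad\cdot\bs\theta)|_{\partial B_R}=0$ is that it gives $g=\bar\rho'(R)\,\bs\theta\cdot\mb e_r$ on $\partial B_R$. Hence $\chi_{lm}(R)R^{l-1}$ is the $lm$-mode of $\bs\theta\cdot\mb e_r$ on the boundary. For $l=0$ this should reproduce (up to normalisation) the earlier $\chi$, via $\Psi_{00}'(R)=\bar\rho'(R)^{-1}g_{00}(R)$. For general $l$, I would check consistency using the divergence theorem on $\bar\rho\bs\theta$ tested against the harmonic function $r^lY_{lm}$.

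For \eqref{chi_lm energy one}, the $\gamma$-terms are immediate. The bulk term $\gamma\bar\rho^{\gamma-2}g_{lm}^2r^2$ equals $\gamma\bar\rho^{\gamma-2}(\bar\rho\chi_{lm})'^2r^{2l}$. In $\Gamma_{lm}$ the term $-\gamma R^2g_{lm}(R)^2/\bar\rho'(R)$ equals $-\gamma\bar\rho'(R)\chi_{lm}(R)^2R^{2l}$, using $g_{lm}(R)=\bar\rho'(R)\chi_{lm}(R)R^{l-1}$, which follows from $\chi_{lm}'(R)=0$.

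The main work is the gravitational part. I must show that
\[
4\pi\int_0^R g_{lm}\Psi_{lm}r^2\,\d r-\frac{R^2}{\bar\rho'(R)}\Big(\frac{4\pi R}{(2l+1)\bar\rho'(R)}g_{lm}(R)^2+8\pi g_{lm}(R)\Psi_{lm}(R)\Big)=-4\pi\int_0^R(\bar\rho\chi_{lm})^2r^{2l}\,\d r .
\]
Write $F=\bar\rho\chi_{lm}$ and substitute $g_{lm}=F'r^{l-1}$ into the double-integral formula \eqref{g and Psi relation}. Integrating by parts in $y$, the kernels $y^{l+2}/r^{l+1}$ and $r^l/y^{l-1}$ become $y^{2l+1}/r^{l+1}$ and $r^l$ after multiplying by $y^{1-l}$. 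This should give $\Psi_{lm}$ in terms of $F$:
\[
(2l+1)\Psi_{lm}(r)=-r^lF(R)+(2l+1)r^lF(r)\cdot(\text{something})-\frac{2l+1}{r^{l+1}}\int_0^r y^{2l}F\,\d y+\dots
\]
Equivalently, $\Psi_{lm}=r^lu$ with $r^{-2l-1}(r^{2l+2}u')'=F'$. Integrating once gives $r^{2l+2}u'=r^{2l+1}F-\int_0^r(2l+1)y^{2l}F\,\d y$. Then $\int g_{lm}\Psi_{lm}r^2=\int F' r^{2l+1}u$, and integrating by parts twice turns this into $-\int F^2r^{2l}$ plus boundary terms in $F(R)$, $u(R)$ and $\int_0^R y^{2l}F$. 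These boundary terms must be evaluated using the exterior relation from Lemma~\ref{Psi boundary condition}, $\Psi_{lm}'(R)=-(l+1)\Psi_{lm}(R)/R$, and the identity $F(R)=\chi_{lm}(R)=R^{1-l}g_{lm}(R)/\bar\rho'(R)$. They should cancel exactly against the $4\pi$ and $8\pi$ parts of $\Gamma_{lm}$. This bookkeeping is where I expect the main obstacle; I would check it first for $l=0$, where it must reproduce \eqref{00 mode in chi form}.

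For \eqref{chi_lm energy two}, I would mimic the computation for $l=0$. Replace $-4\pi\bar\rho$ by $\frac{\gamma}{\gamma-1}r^{-2}(r^2(\bar\rho^{\gamma-1})')'$, and integrate by parts the term $\frac{\gamma}{\gamma-1}(r^2(\bar\rho^{\gamma-1})')'\bar\rho\chi^2r^{2l-2}$. Its boundary term $\frac{\gamma}{\gamma-1}R^{2l}(\bar\rho^{\gamma-1})'(R)\chi(R)^2$ cancels $-\gamma\bar\rho'(R)\chi(R)^2R^{2l}$ because $\bar\rho(R)=1$. Then expand $\gamma\bar\rho^{\gamma-2}(\bar\rho'\chi+\bar\rho\chi')^2$ and collect terms. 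The cross terms $\chi\chi'$ combine into $(\bar\rho^\gamma)'(\chi^2)'r^{2l}$ pieces, and one further integration by parts (boundary term vanishing thanks to the $r^{2l-1}$ weight, with $\bar\rho(R)=1$ handled as above) leaves $\gamma\bar\rho^\gamma\chi'^2r^{2l}-2(l-1)(\bar\rho^\gamma)'\chi^2r^{2l-1}$.
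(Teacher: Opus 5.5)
Your proposal is correct, and it constructs $\chi_{lm}$ by a different route from the paper; the energy computations themselves are essentially the paper's.

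\textbf{Comparison of the two constructions.} The paper expands $\bs\theta$ in vector spherical harmonics and defines $\chi_{lm}$ by the explicit formula \eqref{chi_lm def} in terms of $\Theta^{[0]}_{lm},\Theta^{[1]}_{lm}$. It then gets $\chi_{lm}'(R)=0$ from the mode-wise boundary condition \eqref{boundary divergence condition}. You instead take $\bar\rho\chi_{lm}$ to be the antiderivative of $g_{lm}r^{1-l}$, normalised by $\chi_{lm}(R)=R^{1-l}g_{lm}(R)/\bar\rho'(R)$. With this choice, $\chi_{lm}'(R)=0$ and $g_{lm}(R)=\bar\rho'(R)R^{l-1}\chi_{lm}(R)$ hold by construction for any $g$.

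Under the hypothesis the two definitions coincide. Both satisfy $(\bar\rho\chi_{lm})'=g_{lm}r^{1-l}$, and $g_{lm}(R)=\bar\rho'(R)\Theta^{[0]}_{lm}(R)$ while the paper's $\chi_{lm}(R)=R^{1-l}\Theta^{[0]}_{lm}(R)$.

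What your route buys: it avoids vector spherical harmonics entirely. It also shows that \eqref{chi_lm energy one}--\eqref{chi_lm energy two} are purely one-dimensional identities in $g_{lm}$. The hypothesis on $\bs\theta$ was already used in Lemma \ref{Spherical harmonics decomposition} to write $\Gamma_{lm}$ in terms of $g$ alone, and it survives only as a restriction on which $g_{lm}$ are admissible.

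What the paper's route buys: an expression for $\chi_{lm}$ in terms of $\bs\theta$ itself. This gives the structural reading of the liquid boundary condition as $\chi_{lm}'(R)=0$. It also supplies the behaviour at $r=0$ used to discard boundary terms at the origin, and the argument in the subsequent remark on $\chi_{00}$. You do not address $r=0$ at all. Given the singular weight $y^{1-l}$ in your definition, you would need something like $g_{lm}=O(r^l)$ for smooth $\bs\theta$ plus a density argument.

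\textbf{Minor inaccuracies, none load-bearing.}
\begin{enumerate}
\item Your first display for $(2l+1)\Psi_{lm}$ is garbled. The $r^lF(r)$ terms cancel and the integral enters with a plus sign, so $\Psi_{lm}=r^{-l-1}G$ with $G(r)=\int_0^r y^{2l}(F-F(R))\,\d y$.
\item With that formula, $\int_0^R g_{lm}\Psi_{lm}r^2\,\d r=\int_0^R F'G\,\d r$. A single integration by parts gives $-\int_0^R(F-F(R))^2r^{2l}\,\d r$. The $4\pi$ and $8\pi$ parts of $\Gamma_{lm}$ then complete the square to $-\int_0^R F^2r^{2l}\,\d r$. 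Lemma \ref{Psi boundary condition} is not needed, since $\Psi_{lm}(R)$ can be read off directly.
\item For \eqref{chi_lm energy two}, after the one integration by parts of the gravity term, the $(\bar\rho^\gamma)'(\chi_{lm}^2)'r^{2l}$ cross terms cancel exactly. No further integration by parts is needed. This differs from Proposition \ref{l=0 mode bound}, where the extra $r^3$ inside the derivative produced a $\chi\chi'$ term.
\item $\chi_{00}$ is not a rescaling of the $\chi$ of Proposition \ref{l=0 mode bound}, as the paper's remark stresses. Only the boundary values are related: $\chi_{00}(R)=R^2\chi(R)$ for admissible $g_{00}$. The two formulas for $\Lambda_{00}+\Gamma_{00}$ agree only in value, and only for $g_{00}$ satisfying the divergence-theorem constraint $\int_0^R\bar\rho\chi_{00}\,\d r=0$. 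So your proposed $l=0$ sanity check will not match term by term.
\end{enumerate}
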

\begin{proof}
We can decompose $\bs\theta$ in vector spherical harmonics (see \eqref{vector spherical harmonics}):
\[\bs\theta=\sum_{l=0}^\infty\sum_{m=-l}^l\brac{\Theta^{[0]}_{lm}\mb Y^{[0]}_{lm}+\Theta^{[1]}_{lm}\mb Y^{[1]}_{lm}+\Theta^{[2]}_{lm}\mb Y^{[2]}_{lm}}.\]
Using \eqref{lpc of spherical harmonics} and the fact that $\grad\cdot(\mb x\times\grad)=0$ we can compute the divergence of $\bs\theta$ and $\bar\rho\bs\theta$ in terms of $\Theta^{[k]}_{lm}$. The condition $\grad\cdot\bs\theta|_{\partial B_R}=0$ means
\begin{align*}
  0
  &=\eva{\sum_{l=0}^\infty\sum_{m=-l}^l\brac{r^{-2}(\Theta^{[0]}_{lm}r^2)' +\Theta^{[1]}_{lm}r\lpc }Y_{lm}}_{r=R}
\end{align*}
which means for all $l,m$ we have
\begin{align}
  0 &=\eva{\brac{r^{-2}(\Theta^{[0]}_{lm}r^2)' -l(l+1)\Theta^{[1]}_{lm}r^{-1}}}_{r=R}\nonumber\\
  &=R^{-1}\brac{2\Theta^{[0]}_{lm}(R)-l(l+1)\Theta^{[1]}_{lm}(R)}+\brac{\Theta^{[0]}_{lm}}'(R).\label{boundary divergence condition}
\end{align}
Now
\begin{align*}
g &= \grad\cdot(\bar\rho\bs\theta)
=\sum_{l=0}^\infty\sum_{m=-l}^l\brac{r^{-2}(\bar\rho\Theta^{[0]}_{lm}r^2)' +\bar\rho\Theta^{[1]}_{lm}r\lpc }Y_{lm}\\
&=\sum_{l=0}^\infty\sum_{m=-l}^l\brac{r^{-2}(\bar\rho\Theta^{[0]}_{lm}r^2)' -l(l+1)\bar\rho\Theta^{[1]}_{lm}r^{-1}}Y_{lm}\\
&=\sum_{l=0}^\infty\sum_{m=-l}^l\brac{r^{-2}(\bar\rho\Theta^{[0]}_{lm}r^{1-l}r^{l+1})' -l(l+1)\bar\rho\Theta^{[1]}_{lm}r^{-1}}Y_{lm}\\
&=\sum_{l=0}^\infty\sum_{m=-l}^l\brac{(\bar\rho\Theta^{[0]}_{lm}r^{1-l})'r^{l-1} +(l+1)\brac{\Theta^{[0]}_{lm}-l\Theta^{[1]}_{lm}}\bar\rho r^{-1}}Y_{lm}\\
&=\sum_{l=0}^\infty\sum_{m=-l}^l\brac{(\bar\rho\Theta^{[0]}_{lm}r^{1-l})'r^{l-1} -(l+1)\brac{\bar\rho\bar\rho^{-1}\int^R_r\brac{\Theta^{[0]}_{lm}-l\Theta^{[1]}_{lm}}(y)\bar\rho(y) y^{-l}\d y}'r^{l-1}}Y_{lm}\\
&=\sum_{l=0}^\infty\sum_{m=-l}^l(\bar\rho\chi_{lm})'r^{l-1}Y_{lm}
\end{align*}
where
\begin{align}
  \chi_{lm} &= \Theta^{[0]}_{lm}r^{1-l}-(l+1)\bar\rho^{-1}\int^R_r\brac{\Theta^{[0]}_{lm}-l\Theta^{[1]}_{lm}}(y)\bar\rho(y) y^{-l}\d y.\label{chi_lm def}
\end{align}
Since $\bs\theta\in H^2(B_R)$, as $r\to 0$ we must have $\Theta^{[0]}_{00}=O(r)$ and for $l\geq 1$, $\Theta^{[0]}_{lm}=\Theta^{[1]}_{lm}=O(r^{l-1})$ and $\Theta^{[2]}_{lm}=O(r^l)$. This means $\chi_{00}=O(1)$ and $\chi_{lm}=O(\ln r)$ as $r\to 0$ for $l\geq 1$. These are enough to ensure that the boundary terms at 0 for the integration by parts in what follows vanish.

Using \eqref{boundary divergence condition} we have that
\begin{align*}
  \chi_{lm}'(R) &= (\Theta^{[0]}_{lm}r^{1-l})'(R)+(l+1)\brac{\Theta^{[0]}_{lm}(R)-l\Theta^{[1]}_{lm}(R)} R^{-l}\\
  &= (\Theta^{[0]}_{lm})'(R)R^{1-l}+\brac{2\Theta^{[0]}_{lm}(R)-l(l+1)\Theta^{[1]}_{lm}(R)} R^{-l}=0.
\end{align*}

We have, using the notation of Lemma \ref{Spherical harmonics decomposition},
\begin{align*}
g_{lm} &=(\bar\rho\chi_{lm})'r^{l-1}\\
  \Psi_{lm} &= -{1\over 2l+1}\brac{\int_0^r{y^{2l+1}\over r^{l+1}}(\bar\rho\chi_{lm})'(y)\d y+\int_r^R r^l(\bar\rho\chi_{lm})'(y)\d y}Y_{lm}\\
  &= -{1\over 2l+1}\brac{r^{l}(\bar\rho\chi_{lm})(r)-(2l+1)\int_0^r{y^{2l}\over r^{l+1}}(\bar\rho\chi_{lm})(y)\d y+r^l(\chi_{lm}(R)-(\bar\rho\chi_{lm})(r))}Y_{lm}\\
  &=\brac{\int_0^r{y^{2l}\over r^{l+1}}(\bar\rho\chi_{lm})(y)\d y-{r^l\chi_{lm}(R)\over 2l+1}}Y_{lm}
  =Y_{lm}\int_0^r{y^{2l}\over r^{l+1}}((\bar\rho\chi_{lm})(y)-\chi_{lm}(R))\d y.
\end{align*}
So we have
\begin{align*}
  \Lambda_{lm} &= \int_0^R\brac{\gamma \bar\rho^{\gamma-2}(\bar\rho\chi_{lm})'^2r^{2l}+4\pi(\bar\rho\chi_{lm})' \brac{\int_0^r y^{2l}((\bar\rho\chi_{lm})(y)-\chi_{lm}(R))\d y}}\d r\\
  &=4\pi\chi_{lm}(R)\brac{\int_0^R y^{2l}((\bar\rho\chi_{lm})(y)-\chi_{lm}(R))\d y}\\
  &\quad+\int_0^R \brac{\gamma\bar\rho^{\gamma-2}(\bar\rho\chi_{lm})'^2r^{2l}-4\pi r^{2l}\bar\rho\chi_{lm}(\bar\rho\chi_{lm}-\chi_{lm}(R))}\d r\\
  &=\int_0^R\brac{\gamma\bar\rho^{\gamma-2}(\bar\rho\chi_{lm})'^2r^{2l}-4\pi r^{2l}(\bar\rho\chi_{lm}-\chi_{lm}(R))^2}\d r\\
  \Gamma_{lm} &= -R^2\brac{\brac{\gamma\bar\rho_{lm}'(R)+{4\pi R\over 2l+1}}\chi_{lm}(R)^2R^{2l-2}+8\pi\chi_{lm}(R)R^{-2}\int_0^R y^{2l}((\bar\rho\chi_{lm})(y)-\chi_{lm}(R))\d y}\\
  &=\brac{-\gamma\bar\rho'(R)-{4\pi R\over 2l+1}}\chi_{lm}(R)^2R^{2l}-8\pi\chi_{lm}(R)\int_0^R r^{2l}(\bar\rho\chi_{lm}-\chi_{lm}(R))\d r.
\end{align*}
Therefore we have
\begin{align*}
\Lambda_{lm}+\Gamma_{lm}
  &= \int_0^R\brac{\gamma \bar\rho^{\gamma-2}(\bar\rho\chi_{lm})'^2-4\pi((\bar\rho\chi_{lm}-\chi_{lm}(R)+\chi_{lm}(R))^2-\chi_{lm}(R)^2)}r^{2l}\d r\\
  &\quad+\brac{-\gamma\bar\rho'(R)-{4\pi R\over 2l+1}}\chi_{lm}(R)^2R^{2l}\\
  &=\int_0^R\brac{\gamma \bar\rho^{\gamma-2}(\bar\rho\chi_{lm})'^2-4\pi(\bar\rho\chi_{lm})^2}r^{2l}\d r-\gamma\bar\rho'(R)\chi_{lm}(R)^2R^{2l}.
\end{align*}
By \eqref{LE ODE} we have $0={\gamma\over\gamma-1}(r^2(\bar\rho^{\gamma-1})')'+4\pi r^2\bar\rho=\gamma(r^2\bar\rho^{\gamma-2}\bar\rho')'+4\pi r^2\bar\rho$, so we get
\begin{align*}
  \Lambda_{lm}+\Gamma_{lm} &= \int_0^R\brac{\gamma \bar\rho^{\gamma-2}(\bar\rho\chi_{lm})'^2 r^{2l}+\gamma(r^2\bar\rho^{\gamma-2}\bar\rho')'\bar\rho\chi_{lm}^2r^{2l-2}}\d r-\gamma\bar\rho'(R)\chi_{lm}(R)^2R^{2l}\\
  &= \int_0^R\brac{\gamma\bar\rho^{\gamma-2}(\bar\rho\chi_{lm})'^2 r^{2l}-\gamma r^2\bar\rho^{\gamma-2}\bar\rho'(\bar\rho\chi_{lm}^2r^{2l-2})'}\d r\\
  &=\gamma\int_0^R\brac{\bar\rho^\gamma\chi_{lm}'^2r^{2l} -2(l-1)\bar\rho^{\gamma-2}\bar\rho'\bar\rho\chi_{lm}^2 r^{2l-1}}\d r\\
  &=\int_0^R\brac{\gamma\bar\rho^\gamma\chi_{lm}'^2r^{2l} -2(l-1)(\bar\rho^{\gamma})'\chi_{lm}^2 r^{2l-1}}\d r.\qedhere
\end{align*}
\end{proof}

\begin{remark}
The $\chi_{lm}$ here in Lemma \ref{chi_lm formulation} is not the spherical harmonics expansion of $\chi$ in the proof of Proposition \ref{l=0 mode bound}, nor is $\chi_{00}$ equal to $\chi$. At first sight, it might appear that for $l=0$, \eqref{chi_lm energy two} contradicts the result of non-negativity in Proposition \ref{l=0 mode bound} as choosing $\chi_{00}=1$ in \eqref{chi_lm energy two} gives a strictly negative number. However, $\chi_{00}$ cannot be chosen freely as it is constrained by \eqref{chi_lm def} since it arises from $\bs\theta$. To illustrate, we will show below that no $\bs\theta\in H^2(B_R)$ with $(\grad\cdot\bs\theta)|_{\partial B_R}=0$ can give rise to $\chi_{00}=1$. Suppose that $\chi_{00}=1$, this means
\begin{align*}
  1 &= \Theta^{[0]}_{00}r-\bar\rho^{-1}\int^R_r\Theta^{[0]}_{00}(y)\bar\rho(y)\d y.
\end{align*}
This means $(\bar\rho(1-\Theta^{[0]}_{00}r))'=\Theta^{[0]}_{00}\bar\rho$ which we can rewrite as
\begin{align*}
\bar\rho'=2\Theta^{[0]}_{00}\bar\rho+(\bar\rho\Theta^{[0]}_{00})'r=r^{-1}(\bar\rho\Theta^{[0]}_{00}r^2)'.
\end{align*}
Therefore,
\begin{align*}
\int_0^ry\bar\rho'(y)\d y=\bar\rho\Theta^{[0]}_{00}r^2+C.
\end{align*}
From \eqref{boundary divergence condition} we have
\begin{align*}
  0 &= 2R^{-1}\Theta^{[0]}_{00}(R)+\brac{\Theta^{[0]}_{00}}'(R).
\end{align*}
This means $C(\bar\rho^{-1})'(R)=\brac{\bar\rho^{-1}\int_0^ry\bar\rho'(y)\d y}'(R)$ and that
\begin{align*}
C = \int_0^Ry\bar\rho'(y)\d y - R = \int_0^R\bar\rho\,\d r\not=0.
\end{align*}
But this means $\Theta^{[0]}_{00}\not=O(r)$ which contradicts $\bs\theta\in H^2(B_R)$. Thus, whilst $\chi_{lm}$ is very useful for showing the non-negativity of the $l\geq 1$ modes, it is not very suitable for studying the radial $l=0$ mode. Hence why in Proposition \ref{l=0 mode bound} we used the different variable $\chi$.
\end{remark}

From \eqref{chi_lm energy one} we immediately see that

\begin{corollary}[Bound from above]\label{Bound from above}
Take the setting of Lemma \ref{Spherical harmonics decomposition}. Suppose $\bs\theta\in H^2(B_R)$ and $(\grad\cdot\bs\theta)|_{\partial B_R}=0$, then for all $l,m$ we have
\begin{align*}
\Lambda_{lm}+\Gamma_{lm}\lesssim\int_0^Rg_{lm}^2r^2\d r-{R^2\over\bar\rho'(R)}g_{lm}(R)^2.
\end{align*}
\end{corollary}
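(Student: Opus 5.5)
The plan is to start from the exact identity \eqref{chi_lm energy one} in Lemma \ref{chi_lm formulation} and bound it term by term. That lemma applies to every mode, including $l=0$: the remark after it only says that $\chi_{00}$ is unsuitable for \emph{lower} bounds, so it can still be used for an upper bound. The identity gives
\[
\Lambda_{lm}+\Gamma_{lm}=\int_0^R\brac{\gamma \bar\rho^{\gamma-2}(\bar\rho\chi_{lm})'^2-4\pi(\bar\rho\chi_{lm})^2}r^{2l}\d r-\gamma\bar\rho'(R)\chi_{lm}(R)^2R^{2l}.
\]
The term $-4\pi\int_0^R(\bar\rho\chi_{lm})^2r^{2l}\d r$ is non-positive, so we drop it and obtain an upper bound.

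For the remaining bulk term, we use the representation $g_{lm}=(\bar\rho\chi_{lm})'r^{l-1}$ from Lemma \ref{chi_lm formulation}. It gives $(\bar\rho\chi_{lm})'^2r^{2l}=g_{lm}^2r^2$. Since $1\le\bar\rho\le\bar\rho(0)$ on $[0,R]$, the weight $\gamma\bar\rho^{\gamma-2}$ is bounded above by a constant depending only on $\gamma$ and $\bar\rho(0)$. Hence the bulk term is $\lesssim\int_0^Rg_{lm}^2r^2\d r$.

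For the boundary term, we evaluate the same representation at $r=R$ and use the two facts $\bar\rho(R)=1$ and $\chi_{lm}'(R)=0$, the latter being the manifestation of the liquid boundary condition. This yields
\[
g_{lm}(R)=\brac{\bar\rho'(R)\chi_{lm}(R)+\chi_{lm}'(R)}R^{l-1}=\bar\rho'(R)\chi_{lm}(R)R^{l-1},
\]
so $\chi_{lm}(R)^2R^{2l}=R^2g_{lm}(R)^2/\bar\rho'(R)^2$. Therefore
\[
-\gamma\bar\rho'(R)\chi_{lm}(R)^2R^{2l}=-\gamma\frac{R^2}{\bar\rho'(R)}g_{lm}(R)^2 .
\]
Because $\bar\rho'(R)<0$, this quantity is non-negative and is exactly $\gamma$ times the boundary term in the claimed bound.

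Adding the two estimates gives the corollary with implicit constant $\max\{\gamma\sup_{[0,R]}\bar\rho^{\gamma-2},\gamma\}$. There is no real obstacle here. The only points needing care are the sign of $\bar\rho'(R)$, and the fact that $\chi_{lm}'(R)=0$ is precisely what turns the boundary term into a pure multiple of $g_{lm}(R)^2$.
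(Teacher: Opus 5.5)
Your proof is correct and is the argument the paper intends: it states the corollary as an immediate consequence of \eqref{chi_lm energy one}, obtained by dropping the nonpositive term $-4\pi(\bar\rho\chi_{lm})^2$. The remaining terms are then converted using $g_{lm}=(\bar\rho\chi_{lm})'r^{l-1}$ and $g_{lm}(R)=\bar\rho'(R)\chi_{lm}(R)R^{l-1}$ (via $\chi_{lm}'(R)=0$), exactly as you do. Since $\gamma\le 2$ and $\bar\rho\ge 1$, we have $\bar\rho^{\gamma-2}\le 1$, so your implicit constant is simply $\gamma$.
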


\begin{proposition}[$l=1$ modes bound]\label{l=1 modes bound}
Suppose $\bs\theta\in H^2(B_R)$ and $(\grad\cdot\bs\theta)|_{\partial B_R}=0$. Then
\begin{align*}
\Lambda_{1m}+\Gamma_{1m} \gtrsim  \int_0^R g_{1m}^2r^2\d r+{R^2\over\bar\rho'(R)^2}g_{1m}(R)^2\qquad\text{ when }\qquad \<\bs\theta,\mb e_i\>_{\bar\rho}=0\text{ for all }i=1,2,3,
\end{align*}
where $\Lambda_{1m},\Gamma_{1m},g_{1m}$ are as defined in Lemma \ref{Spherical harmonics decomposition}.
\end{proposition}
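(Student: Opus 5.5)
By \eqref{chi_lm energy two} of Lemma~\ref{chi_lm formulation}, the potential term drops out at $l=1$ because of the factor $2(l-1)$. This leaves
\begin{align*}
\Lambda_{1m}+\Gamma_{1m}=\gamma\int_0^R\bar\rho^\gamma\chi_{1m}'^2r^2\,\d r ,
\end{align*}
which is manifestly non-negative and vanishes exactly when $\chi_{1m}$ is constant. This matches the translational kernel of Proposition~\ref{Eigenfunctions for L}: a constant $\chi_{1m}$ gives $g_{1m}=c\bar\rho'$, i.e.\ $g=\mb f\cdot\grad\bar\rho$. So the plan has three steps. First, turn the orthogonality $\<\bs\theta,\mb e_i\>_{\bar\rho}=0$ into a linear constraint on $\chi_{1m}$. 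Second, prove a Poincaré inequality under that constraint. Third, convert the resulting control of $\chi_{1m}$ back into control of $g_{1m}$ and $g_{1m}(R)$.

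\textbf{Step 1 (the constraint).} Since $x_i$ is a combination of $rY_{1m}$, $m\in\{-1,0,1\}$, I integrate by parts:
\begin{align*}
\<\bs\theta,\mb e_i\>_{\bar\rho}=\int_{B_R}\bar\rho\,\bs\theta\cdot\grad x_i\,\d\mb x=\int_{\partial B_R}x_i\,\bs\theta\cdot\d\mb S-\int_{B_R}x_i\,g\,\d\mb x .
\end{align*}
On $\partial B_R$ we have $\bar\rho=1$, and (as in the proof of the symmetry lemma) $\bs\theta\cdot\mb e_r=\bar\rho'(R)^{-1}g$. Projecting onto $Y_{1m}$, the conditions for $i=1,2,3$ are therefore equivalent, for each $m$, to
\begin{align*}
\frac{R^3}{\bar\rho'(R)}g_{1m}(R)-\int_0^R g_{1m}r^3\,\d r=0 .
\end{align*}
Now insert $g_{1m}=(\bar\rho\chi_{1m})'$. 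Because $\chi_{1m}'(R)=0$ and $\bar\rho(R)=1$, we get $g_{1m}(R)=\bar\rho'(R)\chi_{1m}(R)$. Integrating by parts, $\int_0^R(\bar\rho\chi_{1m})'r^3\,\d r=R^3\chi_{1m}(R)-3\int_0^R\bar\rho\chi_{1m}r^2\,\d r$; the term at $0$ vanishes since $\chi_{1m}=O(\ln r)$. The boundary terms cancel, and the constraint becomes simply
\begin{align*}
\int_0^R\bar\rho\,\chi_{1m}\,r^2\,\d r=0 .
\end{align*}

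\textbf{Step 2 (Poincaré).} Regard $\chi_{1m}(|\mb x|)$ as a radial function on $B_R\subset\R^3$. The weight satisfies $1\le\bar\rho\le\bar\rho(0)$, and the constraint says $\chi_{1m}$ has zero $\bar\rho$-weighted mean. The weighted Poincaré inequality (proved by the usual compactness argument, or directly in 1D) then gives
\begin{align*}
\int_0^R\chi_{1m}^2r^2\,\d r\lesssim\int_0^R\chi_{1m}'^2r^2\,\d r\lesssim\Lambda_{1m}+\Gamma_{1m}.
\end{align*}
The trace inequality for the radial function, or equivalently the 1D fundamental theorem of calculus on $[R/2,R]$ averaged over the starting point, gives $R^2\chi_{1m}(R)^2\lesssim\int_0^R(\chi_{1m}^2+\chi_{1m}'^2)r^2\,\d r$. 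The $O(\ln r)$ behaviour at the origin is harmless thanks to the $r^2$ weight, and I would justify these inequalities by density.

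\textbf{Step 3 (back to $g$).} Write $g_{1m}=\bar\rho'\chi_{1m}+\bar\rho\chi_{1m}'$. By Lemma~\ref{Lane--Emden profile derivative prop}, $|\bar\rho'|\lesssim r\lesssim1$, so
\begin{align*}
\int_0^R g_{1m}^2r^2\,\d r\lesssim\int_0^R(\chi_{1m}^2+\chi_{1m}'^2)r^2\,\d r .
\end{align*}
For the boundary term, ${R^2}{\bar\rho'(R)^{-2}}g_{1m}(R)^2=R^2\chi_{1m}(R)^2$, which Step 2 already controls. Combining the three steps gives the claimed bound. I expect the main obstacle to be the rigorous justification of Step 1: the mode-by-mode projection of the boundary term, and the vanishing of the boundary contributions at $r=0$ given only the asymptotics of $\chi_{1m}$ established in Lemma~\ref{chi_lm formulation}. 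The Poincaré step is routine once the constraint is identified; note that all implicit constants depend on $R$ and $\bar\rho(0)$.
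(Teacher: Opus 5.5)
Your proposal is correct and follows the paper's proof step for step. The factor $2(l-1)$ in \eqref{chi_lm energy two} removes the potential term. The orthogonality conditions reduce to $\int_0^R\bar\rho\,\chi_{1m}r^2\,\d r=0$ once the boundary terms cancel via $g_{1m}(R)=\bar\rho'(R)\chi_{1m}(R)$. Poincaré and trace then bound $\chi_{1m}$, and $g_{1m}=\bar\rho'\chi_{1m}+\bar\rho\chi_{1m}'$ converts this back to the claimed bound, with your weighted-mean argument and kernel remark filling in details the paper leaves implicit.
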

\begin{proof}
The orthogonality condition
\begin{align*}
0&=\<\bs\theta,\mb e_i\>_{\bar\rho}=\int_{B_R}\bar\rho\bs\theta\cdot\grad x^i\;\d\mb x
=-\int_{B_R}gx^i\;\d\mb x+\int_{\partial B_R} x^i\bs\theta\cdot\d\mb S\\
&=-\int_{B_R}gx^i\;\d\mb x+{1\over\bar\rho'(R)}\int_{\partial B_R} x^ig\;\d S
\end{align*}
means that 
$0=-\int_0^Rg_{1m}r^3\d r+R^3\bar\rho'(R)^{-1}g_{1m}(R)$
and therefore
\begin{align*}
  0=-\int_0^R(\bar\rho\chi_{1m})'r^3\d r+R^3\chi_{1m}(R)
  =3\int_0^R\bar\rho\chi_{1m}r^2\d r
\end{align*}
where $\chi_{1m}$ is as defined in Lemma \ref{chi_lm formulation}. This constraint eliminates non-zero constant functions, hence we can apply Poincaré inequality to get that
\begin{align*}
\int_0^R\chi_{1m}^2r^2\d r\lesssim\int_0^R\chi_{1m}'^2r^2\d r.
\end{align*}
Therefore, from Lemma \ref{chi_lm formulation},
\begin{align*}
  \Lambda_{1m}+\Gamma_{1m} &=\int_0^R\gamma\bar\rho^\gamma\chi_{1m}'^2r^2\d r
  \gtrsim \int_0^R\brac{\gamma\bar\rho^\gamma\chi_{1m}'^2r^2+\chi_{1m}^2r^2}\d r\\
  &\gtrsim \int_0^R\brac{\gamma\bar\rho^\gamma\chi_{1m}'^2r^2+\chi_{1m}^2r^2}\d r +R^2\chi_{1m}(R)^2
\end{align*}
where the last line is by trace. Since $g_{1m}=(\bar\rho\chi_{1m})'=\bar\rho'\chi_{1m}+\bar\rho\chi_{1m}'$ and $g_{1m}(R)=\bar\rho'(R)\chi_{1m}(R)$ we get
\begin{align*}
\Lambda_{1m}+\Gamma_{1m} &\gtrsim\int_0^R g_{1m}^2r^2\d r+{R^2\over\bar\rho'(R)^2}g_{1m}(R)^2. \qedhere
\end{align*}
\end{proof}

\begin{proposition}[$l\geq 2$ modes bound]\label{l geq 2 modes bound}
Suppose $\bs\theta\in H^2(B_R)$ and $(\grad\cdot\bs\theta)|_{\partial B_R}=0$. Then for all $l\geq 2$ we have
\begin{align*}
\Lambda_{lm}+\Gamma_{lm} \gtrsim  \int_0^R g_{lm}^2r^2\d r+{R^2\over\bar\rho'(R)^2}g_{lm}(R)^2,
\end{align*}
where $\Lambda_{lm},\Gamma_{lm},g_{lm}$ are as defined in Lemma \ref{Spherical harmonics decomposition}.
\end{proposition}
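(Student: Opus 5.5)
The plan is to use the $\chi_{lm}$ formulation of Lemma \ref{chi_lm formulation}, specifically \eqref{chi_lm energy two}:
\[
\Lambda_{lm}+\Gamma_{lm}=\int_0^R\brac{\gamma\bar\rho^\gamma\chi_{lm}'^2r^{2l}-2(l-1)(\bar\rho^{\gamma})'\chi_{lm}^2 r^{2l-1}}\d r .
\]
For $l\geq2$ both terms are non-negative, because $\bar\rho$ is decreasing and so $-(\bar\rho^\gamma)'>0$. Lemma \ref{Lane--Emden profile derivative prop} gives $-(\bar\rho^\gamma)'\geq \tfrac{4\pi}{3}r$. Together with $1\le\bar\rho\le\bar\rho(0)$ this yields the lower bound
\[
\Lambda_{lm}+\Gamma_{lm}\gtrsim \int_0^R\brac{\chi_{lm}'^2+\chi_{lm}^2}r^{2l}\d r .
\]
Unlike the $l=1$ case, no orthogonality condition is needed: the zeroth-order term already has a good sign and controls $\chi_{lm}$ directly. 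The implied constant should degrade at most like a constant independent of $l$, since $2(l-1)\geq 2$; this matters for summing over modes.

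The next step is to convert this bound into one on $g_{lm}$. Since $g_{lm}=(\bar\rho\chi_{lm})'r^{l-1}=(\bar\rho'\chi_{lm}+\bar\rho\chi_{lm}')r^{l-1}$, we have
\[
\int_0^R g_{lm}^2 r^2\d r\lesssim\int_0^R\brac{\bar\rho'^2\chi_{lm}^2+\chi_{lm}'^2}r^{2l}\d r .
\]
Since $|\bar\rho'|\lesssim r\lesssim R$ (again by Lemma \ref{Lane--Emden profile derivative prop} and the bounds on $\bar\rho$), this is controlled by the quantity above, with constants depending on $R$ and $\bar\rho(0)$.

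For the boundary term, $\chi_{lm}'(R)=0$ gives $g_{lm}(R)=\bar\rho'(R)\chi_{lm}(R)R^{l-1}$, so
\[
\frac{R^2}{\bar\rho'(R)^2}g_{lm}(R)^2=R^{2l}\chi_{lm}(R)^2 .
\]
This must be controlled by a trace inequality in the weighted space. One option is to write $R^{2l+1}\chi(R)^2=\int_0^R(r^{2l+1}\chi^2)'\d r$, which is legitimate because the boundary term at $0$ vanishes by the asymptotics $\chi_{lm}=O(\ln r)$ noted in Lemma \ref{chi_lm formulation}. Expanding and applying Cauchy--Schwarz gives
\[
R^{2l+1}\chi(R)^2\le(2l+1)\int_0^R\chi^2r^{2l}\d r+2\int_0^R|\chi||\chi'|r^{2l+1}\d r\lesssim (l+1)\int_0^R(\chi^2+\chi'^2)r^{2l}\d r .
\]

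The main obstacle is that this trace constant grows like $l$, so it is not uniform in $l$. To fix this I would use the zeroth-order coercive term more carefully, since its coefficient $2(l-1)$ also grows linearly in $l$. The point is that the lower bound actually available is $\int\chi'^2r^{2l}+(l-1)\int\chi^2r^{2l}$, up to constants. The factor $(2l+1)$ in front of $\int\chi^2r^{2l}$ in the trace estimate is then absorbed by the $(l-1)$ term, using $(2l+1)\lesssim(l-1)$ for $l\geq2$. The cross term is handled by Young's inequality, $2|\chi\chi'|r^{2l+1}\le R(\chi'^2+\chi^2)r^{2l}$, which is uniform in $l$. This gives
\[
R^{2l}\chi_{lm}(R)^2\lesssim\int_0^R\brac{\chi_{lm}'^2+(l-1)\chi_{lm}^2}r^{2l}\d r\lesssim\Lambda_{lm}+\Gamma_{lm}
\]
with constants independent of $l$. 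Combining this with the bulk bound proves the proposition with uniform constants, which is what is needed when summing the modes to obtain \eqref{L lower bound}.
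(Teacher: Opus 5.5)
Your proposal is correct and follows the same route as the paper. Like the paper, you start from \eqref{chi_lm energy two}, use Lemma~\ref{Lane--Emden profile derivative prop} and a trace bound to control $\int_0^R(\chi_{lm}'^2+(l-1)\chi_{lm}^2)r^{2l}\,\d r+R^{2l}\chi_{lm}(R)^2$, and then convert using $g_{lm}=(\bar\rho\chi_{lm})'r^{l-1}$ and $g_{lm}(R)=\bar\rho'(R)\chi_{lm}(R)R^{l-1}$; your explicit weighted trace identity, with the $(2l+1)$ factor absorbed by the $(l-1)$ term, also makes precise the $l$-uniformity that the paper's bare ``by trace'' leaves implicit but needs when summing modes in Theorem~\ref{linear operator coercivity}.
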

\begin{proof}
From Lemma \ref{chi_lm formulation}
\begin{align*}
  \Lambda_{lm}+\Gamma_{lm} &=\int_0^R\brac{\gamma\bar\rho^\gamma\chi_{lm}'^2r^{2l} -2(l-1)(\bar\rho^{\gamma})'\chi_{lm}^2 r^{2l-1}}\d r\\
  &\gtrsim \int_0^R\brac{\chi_{lm}'^2r^{2l} +(l-1)\chi_{lm}^2 r^{2l}}\d r +R^{2l}\chi_{lm}(R)^2
\end{align*}
where the last line is by lemma \ref{Lane--Emden profile derivative prop} and trace. Since $g_{lm}=(\bar\rho\chi_{lm})'r^{l-1}=\bar\rho'\chi_{lm}r^{l-1}+\bar\rho\chi_{lm}'r^{l-1}$ and $g_{lm}(R)=\bar\rho'(R)\chi_{lm}(R)R^{l-1}$ we then have the desired bound.
\end{proof}

Finally, collecting all these results, we can prove Theorem \ref{linear operator coercivity}.

\begin{proof}[Proof of Theorem~\ref{linear operator coercivity}]
Corollary \ref{Bound from above} gives us
\begin{align*}
\<\mb L\bs\theta,\bs\theta\>_{\bar\rho}
&=\sum_{l=0}^\infty\sum_{m=-l}^l\brac{\Lambda_{lm}+\Gamma_{lm}}\\
&\lesssim\sum_{l=0}^\infty\sum_{m=-l}^l\brac{\int_0^R g_{lm}^2r^2\d r +{R^2\over|\bar\rho'(R)|}g_{lm}(R)^2}
=\|g\|_{L^2(B_R)}^2+{1\over|\bar\rho'(R)|}\|g\|_{L^2(\partial B_R)}^2.
\end{align*}
\begin{enumerate}[i.]
\item Note that since $\mb L\mb e_i=\mb 0$ and $\mb L$ symmetric, we have
\begin{align*}
\<\mb L\bs\theta,\bs\theta\>_{\bar\rho}
&=\inner{\mb L\brac{\bs\theta-\sum_{i=1}^3{\<\bs\theta,\mb e_i\>_{\bar\rho}\over\|\mb e_i\|_{\bar\rho}^2}\mb e_i},\brac{\bs\theta-\sum_{i=1}^3{\<\bs\theta,\mb e_i\>_{\bar\rho}\over\|\mb e_i\|_{\bar\rho}^2}\mb e_i}}_{\bar\rho}.
\end{align*}
So for i., it suffices to prove the case when $\<\bs\theta,\mb e_i\>_{\bar\rho}=0$, $i=1,2,3$.

Combining all the bounds we have for each $l,m$ from Propositions~\ref{l=0 mode bound}, \ref{l=1 modes bound} and \ref{l geq 2 modes bound}, we have
\begin{align*}
\<\mb L\bs\theta,\bs\theta\>_{\bar\rho}
&=\sum_{l=0}^\infty\sum_{m=-l}^l\brac{\Lambda_{lm}+\Gamma_{lm}}\\
&\gtrsim\sum_{l=0}^\infty\sum_{m=-l}^l\brac{\int_0^R g_{lm}^2r^2\d r +{R^2\over\bar\rho'(R)^2}g_{lm}(R)^2}
=\|g\|_{L^2(B_R)}^2+{1\over\bar\rho'(R)^2}\|g\|_{L^2(\partial B_R)}^2.
\end{align*}
\item By ii.~of Theorem \ref{radial positivity} there exist $\chi$ such that $\<\mathcal{L}\chi,\chi\>_{L^2([0,R])}<0$. Let $\bs\theta=(4\pi)^{-1/2}r\chi\mb e_r$, then in the notation of Lemma \ref{Spherical harmonics decomposition} we have $g=(4\pi)^{-1/2}r^{-2}(r^3\bar\rho\chi)'=r^{-2}(r^3\bar\rho\chi)'Y_{00}$ and $\Psi=\Psi_{00}Y_{00}$ so that
\begin{align*}
\<\mb L\bs\theta,\bs\theta\>_{\bar\rho}=\Lambda_{00}+\Gamma_{00}.
\end{align*}
By \eqref{derivative of Psi_lm} we have
\begin{align*}
\Psi_{00}'(r)=\int_0^r{y^2\over r^2}y^{-2}(y^3\bar\rho\chi)'\d y=r\bar\rho\chi.
\end{align*}
So by \eqref{00 mode in chi form},
\begin{align*}
\<\mb L\bs\theta,\bs\theta\>_{\bar\rho}&=\Lambda_{00}+\Gamma_{00}=\<\mathcal{L}\chi,\chi\>_{L^2([0,R])}<0.\qedhere
\end{align*}
\end{enumerate}
\end{proof}

\section*{Acknowledgements}

The author is supported by NWO grants VI.Vidi.223.019 and OCENW.M20.194 and previously by the EPSRC studentship grant EP/R513143/1 when this research began. The author thanks Mahir Had\v{z}i\'c and Juhi Jang for helpful discussions, and is grateful to Had\v{z}i\'c for introducing him to this problem.

\appendix
\section{Appendix}

\subsection{Spherical harmonics}\label{A:SPHERICALHARMONICS}

Spherical harmonics have real as well as complex versions. For the definition and basic properties of the complex version, see \cite{Jackson_1962}. The relation between complex spherical harmonics $Y^m_l:S^2\to\C$ and real spherical harmonics $Y_{lm}:S^2\to\R$ is
\begin{align*}
Y^m_l=\begin{cases}{1\over\sqrt 2}(Y_{l,-m}-iY_{lm})& m<0\\
Y_{l0}& m=0\\
{(-1)^m\over\sqrt 2}(Y_{lm}+iY_{l,-m})& m>0.\end{cases}
\end{align*}
We also have the relation
$
(Y^m_l)^*=(-1)^mY^{-m}_l.
$
The zeroth and first order real spherical harmonics are given by
\begin{alignat*}{3}
Y_{0,0}(\mb x)&={1\over\sqrt{4\pi}},& \qquad\qquad \ 
Y_{1,-1}(\mb x)&=\sqrt{3\over 4\pi}{x^2\over|\mb x|},\\
Y_{1,0}(\mb x)&=\sqrt{3\over 4\pi}{x^3\over|\mb x|},& \qquad\qquad \
Y_{1,1}(\mb x)&=\sqrt{3\over 4\pi}{x^1\over|\mb x|}.
\end{alignat*}
The spherical harmonics satisfy
\begin{align}\label{lpc of spherical harmonics}
\lpc Y_{lm}=-l(l+1)r^{-2}Y_{lm},\qquad\qquad\qquad\lpc(r^lY_{lm})=0,
\end{align}
and the following orthonormal conditions
\begin{align*}
\int_{S^2}Y_{lm}Y_{l'm'}\d S=\delta_{ll'}\delta_{mm'}=\int_{S^2}Y_{m}^m(Y_{l'}^{m'})^*\d S
\end{align*}
and they form a basis for $L^2(S^2)$ \cite{Atkinson_Han_2012} so that, in particular, any function $g\in L^2(S^2)$ has a spherical harmonics expansion
\[g=\sum_{l=0}^\infty\sum_{m=-l}^l g_{lm}Y_{lm},\qquad\qquad g_{lm}\in\R\]
that converge in $L^2(S^2)$. More generally, a function $g\in L^2(B_R)$ has a spherical harmonics expansion in $L^2(B_R)$,
\begin{align}
g=\sum_{l=0}^\infty\sum_{m=-l}^l g_{lm}(r)Y_{lm},\qquad\qquad g_{lm}:[0,R]\to\R.\label{spherical harmonics expansion}
\end{align}
Indeed, since $L^2(B_R)=L^2([0,R];L^2(S^2),r^2)=L^2([0,R];L^2(\partial B_r))$, or in other words
\[\int_{B_R}|\ph|\ \d\mb x=\int_0^R\int_{\partial B_r}|\ph|\ \d S\d r,\]
$g|_{\partial B_r}$ must be in $L^2(\partial B_r)$ for almost every $r\in[0,R]$. So a spherical harmonics expansion exists for almost every $r$. Now
\begin{align*}
\norm{g-\sum_{l=0}^N\sum_{m=-l}^l g_{lm}Y_{lm}}_{L^2(B_R)}^2&=\int_0^R\norm{g-\sum_{l=0}^N\sum_{m=-l}^l g_{lm}Y_{lm}}_{L^2(\partial B_r)}^2\d r\\
&\to 0\qquad\text{as}\qquad N\to\infty
\end{align*}
by the dominated convergence theorem (where the dominating function is $4\|g\|_{L^2(\partial B_r)}^2$). Hence \eqref{spherical harmonics expansion} converge in $L^2(B_R)$. Similarly, functions in $L^2(\R^3)$ have a spherical harmonics expansion.

The following lemma allows us to expand gravitational potentials in spherical harmonics.

\begin{lemma}\label{potential decomposition}
For $\mb x,\mb y\in\R^3$ we have
\begin{align*}
{1\over|\mb x-\mb y|}&=4\pi\sum_{l=0}^\infty\sum_{m=-l}^l{1\over 2l+1}{\min\{|\mb x|,|\mb y|\}^l\over\max\{|\mb x|,|\mb y|\}^{l+1}}Y_{lm}(\mb y)Y_{lm}(\mb x)
\end{align*}
and this expression converge uniformly for $(\mb x,\mb y)$ in any compact set in $\{(\mb r,\mb r')\in\R^6:|\mb r|\not=|\mb r'|\}$.
\end{lemma}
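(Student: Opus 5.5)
The plan is to derive the expansion from the generating function of the Legendre polynomials and then apply the addition theorem for spherical harmonics. Write $r=\min\{|\mb x|,|\mb y|\}$, $r'=\max\{|\mb x|,|\mb y|\}$ and $t=\cos\angle(\mb x,\mb y)=\hat{\mb x}\cdot\hat{\mb y}$. Then
\[
|\mb x-\mb y|=r'\sqrt{1-2(r/r')t+(r/r')^2}.
\]

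\textbf{Step 1 (Legendre generating function).} For $|h|<1$ and $|t|\le 1$ we have
\[
(1-2ht+h^2)^{-1/2}=\sum_{l\ge0}P_l(t)h^l.
\]
Taking $h=r/r'$ gives
\[
\frac1{|\mb x-\mb y|}=\sum_{l\ge0}\frac{r^l}{r'^{l+1}}P_l(\hat{\mb x}\cdot\hat{\mb y}).
\]

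\textbf{Step 2 (addition theorem).} For real orthonormal spherical harmonics,
\[
P_l(\hat{\mb x}\cdot\hat{\mb y})=\frac{4\pi}{2l+1}\sum_{m=-l}^lY_{lm}(\mb x)Y_{lm}(\mb y).
\]
This follows from the complex version, $P_l=\frac{4\pi}{2l+1}\sum_m Y^m_l(\mb x)(Y^m_l(\mb y))^*$ as in \cite{Jackson_1962}. To convert, substitute the relations between $Y^m_l$ and $Y_{lm}$ from the appendix. For each pair $\pm m$ with $m>0$, the two complex terms combine as
\[
\tfrac12\bigl[(Y_{lm}+iY_{l,-m})(\mb x)(Y_{lm}-iY_{l,-m})(\mb y)+(Y_{l,-m}-iY_{lm})(\mb x)(Y_{l,-m}+iY_{lm})(\mb y)\bigr].
\]
The imaginary cross terms cancel and this equals $Y_{lm}(\mb x)Y_{lm}(\mb y)+Y_{l,-m}(\mb x)Y_{l,-m}(\mb y)$. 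The $(-1)^m$ factors cancel because of the conjugation. Substituting into Step 1 gives the stated identity.

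\textbf{Step 3 (uniform convergence).} Let $K$ be a compact subset of $\{|\mb r|\ne|\mb r'|\}$. The continuous function $\min/\max$ of the two radii attains its maximum $q<1$ on $K$. Moreover $\max\{|\mb x|,|\mb y|\}\ge c>0$ on $K$, since the point $\mb x=\mb y=0$ is excluded. Because $|P_l(t)|\le1$ on $[-1,1]$, the $l$-th term (in its $P_l$ form, which equals the inner sum over $m$) is bounded by $q^l/c$. The Weierstrass M-test then gives uniform convergence of the series in $l$. For each fixed $l$ the sum over $m$ is finite, so nothing more is needed there.

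The main obstacle is the bookkeeping in Step 2, namely checking signs and normalisations when passing from complex to real harmonics. Everything else is classical and can be cited.
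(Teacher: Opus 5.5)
Your proposal is correct and follows the paper's proof essentially step for step: law of cosines, the Legendre generating function, the addition theorem, the conversion from complex to real harmonics, and uniform convergence from $|P_l|\le 1$. Your Weierstrass M-test with $q<1$ and $\max\{|\mb x|,|\mb y|\}\ge c>0$ is simply a more explicit version of the paper's radius-of-convergence remark.

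There is one slip in the bookkeeping of Step 2. For $m>0$ the $-m$ term is $Y^{-m}_l(\mb x)\,(Y^{-m}_l(\mb y))^*=\tfrac12(Y_{lm}-iY_{l,-m})(\mb x)\,(Y_{lm}+iY_{l,-m})(\mb y)$, which is the complex conjugate of the $+m$ term. The product $(Y_{l,-m}-iY_{lm})(\mb x)(Y_{l,-m}+iY_{lm})(\mb y)$ that you displayed instead has the same imaginary part $Y_{l,-m}(\mb x)Y_{lm}(\mb y)-Y_{lm}(\mb x)Y_{l,-m}(\mb y)$ as the first term, so as written the cross terms add rather than cancel. With the corrected term, your stated conclusion $Y_{lm}(\mb x)Y_{lm}(\mb y)+Y_{l,-m}(\mb x)Y_{l,-m}(\mb y)$ follows.
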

\begin{proof}
From~\cite{Jackson_1962} we have
\begin{align*}
{1\over|\mb x-\mb y|}&=4\pi\sum_{l=0}^\infty\sum_{m=-l}^l{1\over 2l+1}{\min\{|\mb x|,|\mb y|\}^l\over\max\{|\mb x|,|\mb y|\}^{l+1}}Y^m_{l}(\mb y)^*Y^m_{l}(\mb x).
\end{align*}
One derivation of this formula is as follows. Assume $r'=|\mb r'|<|\mb r|=r$, otherwise swap $\mb r'$ and $\mb r$. By the law of cosines,
\[{\displaystyle {\frac {1}{|\mathbf {r} -\mathbf {r} ' |}}={\frac {1}{\sqrt {r^{2}+(r')^{2}-2rr'\cos \gamma }}}={\frac {1}{r{\sqrt {1+h^{2}-2h\cos \gamma }}}}\quad {\hbox{with}}\quad h:={\frac {r'}{r}}.}\]
We find here the generating function of the Legendre polynomials $P_{\ell }(\cos \gamma )$:
\begin{align}
{\displaystyle {\frac {1}{\sqrt {1+h^{2}-2h\cos \gamma }}}=\sum _{\ell =0}^{\infty }h^{\ell }P_{\ell }(\cos \gamma ).}\label{Generating function of the Legendre polynomials}
\end{align}
Use of the spherical harmonic addition theorem
\[{\displaystyle P_{\ell }(\cos \gamma )={\frac {4\pi }{2\ell +1}}\sum _{m=-\ell }^{\ell }(-1)^{m}Y_{\ell }^{-m}(\theta ,\varphi )Y_{\ell }^{m}(\theta ',\varphi ')}\]
gives our first formula.
Since $|P_\ell(\cos\gamma)|\leq 1$ for all $\ell$, the power series in \eqref{Generating function of the Legendre polynomials} has radius of convergence 1, and uniform convergence for any compact set in $B_1$. By the identity theorem for analytic functions, the equality of \eqref{Generating function of the Legendre polynomials} holds for $h<1$. We thus conclude that the expansion for $|\mb r-\mb r'|^{-1}$ converge uniformly on any compact set in $\{(\mb r,\mb r')\in\R^6:|\mb r|\not=|\mb r'|\}$.
Moreover, in real spherical harmonics,
\begin{align*}
{1\over|\mb x-\mb y|}
&=4\pi\sum_{l=0}^\infty\sum_{m=-l}^l{1\over 2l+1}{\min\{|\mb x|,|\mb y|\}^l\over\max\{|\mb x|,|\mb y|\}^{l+1}}Y^m_{l}(\mb y)^*Y^m_{l}(\mb x)\\
&=4\pi\sum_{l=0}^\infty\sum_{m=-l}^l{(-1)^m\over 2l+1}{\min\{|\mb x|,|\mb y|\}^l\over\max\{|\mb x|,|\mb y|\}^{l+1}}Y^{-m}_{l}(\mb y)Y^m_{l}(\mb x)\\
&=4\pi\sum_{l=0}^\infty\sum_{m=1}^l{1\over 2}{1\over 2l+1}{\min\{|\mb x|,|\mb y|\}^l\over\max\{|\mb x|,|\mb y|\}^{l+1}}(Y_{l,m}(\mb y)-iY_{l,-m}(\mb y))(Y_{lm}(\mb x)+iY_{l,-m}(\mb x))\\
&\quad+4\pi\sum_{l=0}^\infty{1\over 2l+1}{\min\{|\mb x|,|\mb y|\}^l\over\max\{|\mb x|,|\mb y|\}^{l+1}}Y_{l0}(\mb y)Y_{l0}(\mb x)\\
&\quad+4\pi\sum_{l=0}^\infty\sum_{m=-l}^{-1}{1\over 2}{1\over 2l+1}{\min\{|\mb x|,|\mb y|\}^l\over\max\{|\mb x|,|\mb y|\}^{l+1}}(Y_{l,-m}(\mb y)+iY_{lm}(\mb y))(Y_{l,-m}(\mb x)-iY_{lm}(\mb x))\\
&=4\pi\sum_{l=0}^\infty\sum_{m=-l}^l{1\over 2l+1}{\min\{|\mb x|,|\mb y|\}^l\over\max\{|\mb x|,|\mb y|\}^{l+1}}Y_{lm}(\mb y)Y_{lm}(\mb x).\qedhere
\end{align*}
\end{proof}

Define the vector spherical harmonics via
\begin{align}\label{vector spherical harmonics}
\mb Y^{[0]}_{lm}=Y_{lm}\mb e_r,\qquad\qquad
\mb Y^{[1]}_{lm}=r\grad Y_{lm},\qquad\qquad
\mb Y^{[2]}_{lm}=\mb x\times\grad Y_{lm}.
\end{align}
These vector spherical harmonics are orthogonal to each other and
\begin{align}\label{vector spherical harmonics size}
\int_{S^2}\big|\mb Y^{[0]}_{lm}\big|^2\d S =1,\qquad\qquad\qquad
\int_{S^2}\big|\mb Y^{[1]}_{lm}\big|^2\d S =
\int_{S^2}\big|\mb Y^{[2]}_{lm}\big|^2\d S =l(l+1).
\end{align}
We can expand $L^2$ vector fields $\bs\theta$ as
\begin{align*}
\bs\theta=\sum_{l=0}^\infty\sum_{m=-l}^l\brac{\Theta_{lm}^{[0]}\mb Y^{[0]}_{lm}+\Theta_{lm}^{[1]}\mb Y^{[1]}_{lm}+\Theta_{lm}^{[2]}\mb Y^{[2]}_{lm}}.
\end{align*}
Details about these vector spherical harmonics can be found in \cite{Barrera_Estevez_Giraldo_1985, Freeden_Schreiner_2022}.

\nocite{Lin_Zeng_2022}
\bibliographystyle{plain} 
\bibliography{references}

\end{document}